\newcommand {\abs}[1]{\lvert#1\rvert}
\newcommand {\A}{{\mathcal{A}}}
\newcommand{\Be}{{B}}
\newcommand {\C}{{\mathbb C}}
\newcommand {\Ce}{\mathrm{C}}
\newcommand {\D}{D}
\newcommand {\ud}{\mathrm{d}}
\newcommand {\ue}{e}
\newcommand {\E}{{\mathcal{E}}}
\newcommand {\Ell}{L}
\newcommand{\Ellp}{{L^{p}}} 
\newcommand{\Ellq}{{L^{q}}}
\newcommand{\Ellr}{L^{r}}
\newcommand{\Ellinfty}{L^{\infty}}
\newcommand{\Wnp}{W^{n,p}} 
\newcommand{\Wonep}{W^{1,p}}
\newcommand{\Wonepeq}{W^{1,p}} 
\newcommand {\F}{{\mathcal{F}}}
\newcommand {\HT}{{\mathcal{H}}}
\newcommand {\ui}{i}
\newcommand{\ind}{{\mathbf{1}}}
\newcommand {\La}{{\mathcal{L}}}
\newcommand {\M}{{\mathcal{M}}}
\newcommand {\N}{{{\mathbb N}}}
\newcommand {\norm}[1]{\left\|#1\right\|}
\newcommand {\ph}{{\varphi}}
\newcommand {\R}{{\mathbb R}}
\newcommand {\Rad}{{\mathrm{Rad}}}
\newcommand {\ran}{{\mathrm{ran}}}
\newcommand {\supp}{{\mathrm{supp}}}
\newcommand{\Se}{\mathrm{S}}
\newcommand{\St}{{\mathrm{St}}} 
\newcommand{\Sw}{\mathcal{S}}
\newcommand {\w}{{\omega}}
\newcommand {\W}{{\mathrm{W}}}
\newcommand {\Z}{{{\mathbb Z}}}
\newcommand {\vanish}[1]{\relax}
\newcommand {\DAPQ}{{\D_{A}(\frac{1}{p}-\frac{1}{q},1)}}
\newcommand {\DAPQBIG}{{\D_{A}(\tfrac{1}{p}-\tfrac{1}{q},1)}}
\newcommand {\DLOGPQ}{{\D_{\log(A)}(\frac{1}{p}-\frac{1}{q},1)}}
\newcommand {\DLOGPQBIG}{{\D_{\log(A)}(\tfrac{1}{p}-\tfrac{1}{q},1)}}
\DeclareMathOperator{\Real}{Re}
\DeclareMathOperator{\Imag}{Im}
\newtheorem{theorem}{Theorem}[section]
\newtheorem{lemma}[theorem]{Lemma}
\newtheorem{proposition}[theorem]{Proposition}
\newtheorem{corollary}[theorem]{Corollary}
\theoremstyle{definition}
\newtheorem{definition}[theorem]{Definition}
\newtheorem{remark}[theorem]{Remark}
\newtheorem{example}[theorem]{Example}
\numberwithin{equation}{section}
\title{Functional calculus for $C_{0}$-groups using type and cotype}
\author{Jan Rozendaal}
\address{Mathematical Sciences Institute\\ Australian National University\\ Acton ACT 2601\\ Australia\\ and Institute of Mathematics, Polish Academy of Sciences\\
ul.~\'{S}niadeckich 8\\
00-656 Warsaw\\
Poland}
\email{janrozendaalmath@gmail.com}
\subjclass[2010]{Primary 47A60; Secondary 47D03, 46B20, 42A45}
\date{}
\dedicatory{}
\keywords{Functional calculus, Operator group, Type and cotype, Transference, $R$-boundedness}
\begin{document}

\begin{abstract}
We study the functional calculus properties of generators of $C_{0}$-groups under type and cotype assumptions on the underlying Banach space. In particular, we show the following. Let $-\ui A$ generate a $C_{0}$-group on a Banach space $X$ with type $p\in[1,2]$ and cotype $q\in[2,\infty)$. Then $f(A):(X,\D(A))_{\frac{1}{p}-\frac{1}{q},1}\to X$ is bounded for each bounded holomorphic function $f$ on a sufficiently large strip. %Hence $A$ has a bounded calculus for the class of bounded holomorphic functions which decay polynomially of order $\alpha>\frac{1}{p}-\frac{1}{q}$ at infinity. Under additional geometric assumptions, satisfied by $\Ellp$-spaces, we cover the case $\alpha=\frac{1}{p}-\frac{1}{q}$.
As a corollary of this result, for sectorial operators we quantify the gap between bounded imaginary powers and a bounded $\HT^{\infty}$-calculus in terms of the type and cotype of the underlying Banach space. For cosine functions we obtain similar results as for $C_{0}$-groups. We extend our theorems to $R$-bounded operator-valued calculi, and we give an application to the theory of rational approximation of $C_{0}$-groups.
\end{abstract}

\maketitle

\section{Introduction}\label{introduction}

Let $-\ui A$ generate a $C_{0}$-group $(U(s))_{s\in\R}\subseteq\La(X)$ on a Banach space $X$, and set 
\begin{align}\label{group type}
\theta(U):=\inf\left\{\theta\geq 0\left|\exists M\geq 1:\norm{U(s)}\leq M\ue^{\theta\abs{s}}\textrm{ for all }s\in\R\right.\right\}.
\end{align}
For each $\w>0$ let
\begin{align}\label{strip}
\St_{\w}:=\left\{z\in\C\left|\,\abs{\Imag(z)}>\w\right.\right\},
\end{align}
and let $\HT^{\infty}\!(\St_{\w})$ be the space of bounded holomorphic functions on $\St_{\w}$ with the supremum norm. There is a natural definition of $f(A)$ as an unbounded operator on $X$ for each $\w>\theta(U)$ and each $f\in\HT^{\infty}\!(\St_{\w})$ (see Section \ref{functional calculus}). It was shown by Boyadzhiev and deLaubenfels in \cite{Boyadzhiev-deLaubenfels94} that, if $X$ is a Hilbert space, then there exists a constant $C\geq 0$ such that $f(A)\in\La(X)$ with 
\begin{align}\label{Hinfty estimate}
\norm{f(A)}_{\La(X)}\leq C\norm{f}_{\HT^{\infty}\!(\St_{\w})}
\end{align}
for each $\w>\theta(U)$ and $f\in\HT^{\infty}\!(\St_{\w})$. One says that $A$ has a \emph{bounded $\HT^{\infty}$-calculus}. It is useful to know that an operator has a bounded $\HT^{\infty}$-calculus. The theory of $\HT^{\infty}$-calculus has applications to questions of maximal regularity (see \cite{Arendt04,Kalton-Weis01,Kunstmann-Weis04}) and played a crucial role in the solution to the Kato square root problem in \cite{AuHoLaMcITc02,AxKeMcI06}. Also, functional calculus bounds can be used to determine convergence rates in the numerical approximation theory for solutions to evolution equations (see \cite{Brenner-Thomee79,Gomilko-Tomilov14, Egert-Rozendaal13}). 

One can obtain nontrivial functional calculus results for $C_{0}$-groups even when $X$ is not a Hilbert space. For example, it was shown in \cite{Haase09} that if $-\ui A$ generates a $C_{0}$-group $(U(s))_{s\in\R}$ on a UMD space $X$, then $A$ has a bounded $\HT^{\infty}_{1}\!(\St_{\w})$-calculus for all $\w>\theta(U)$. Here $\HT^{\infty}_{1}\!(\St_{\w})$ consists of all $f\in\HT^{\infty}\!(\St_{\w})$ such that
\begin{align}\label{mikhlin norm}
\norm{f}_{\HT^{\infty}_{1}\!(\St_{\w})}:=\sup_{z\in\St_{\w}}\,\abs{f(z)}+(1+\abs{z})\abs{f'(z)}<\infty.
\end{align}
In \cite{Haase-Rozendaal16} a similar statement was obtained on general Banach spaces, where one restricts the calculus to real interpolation spaces between $X$ and the domain of $A$.

However, there are several drawbacks to functional calculus theory for function spaces which are strictly contained in the class of $\HT^{\infty}$-functions. For example, in applications one might be interested in functions $f$ which are not contained in a smaller function space such as $\HT^{\infty}_{1}\!(\St_{\w})$. But even when dealing with a function $f$ which is known to yield a bounded operator $f(A)$, it can be of interest to know that \eqref{Hinfty estimate} holds, instead of an estimate for $\norm{f(A)}_{\La(X)}$ with respect to a larger function norm. This is the case for numerical approximation schemes for solutions to evolution equations, where one can often improve convergence rates for operators with a bounded $\HT^{\infty}$-calculus (see \cite{Egert-Rozendaal13,FlNeWe03}).

Moreover, a bounded $\HT^{\infty}$-calculus yields square function estimates that arise frequently in harmonic analysis (see \cite{Kalton-Weis04, Haak-Haase13, LeMerdy04}) and that are of use in the theory of stochastic evolution equations \cite{vanNeerven10, vNeVeWe12}. Also, a bounded $\HT^{\infty}$-calculus can be bootstrapped to yield large classes of $R$-bounded operators, and the notion of $R$-boundedness has various applications in the theory of evolution equations (see \cite{Kunstmann-Weis04}). It seems that such connections are not available in full generality for subspaces of the class of $\HT^{\infty}$-functions (if $A$ is self-adjoint then more can be said; see for example \cite{AuMcIMo15}).

Finally, it seems somewhat unsatisfactory that there is such a rough division between functional calculus properties on Hilbert spaces and on UMD spaces, in the sense that one goes from a bounded $\HT^{\infty}$-calculus on Hilbert spaces to a bounded $\HT^{\infty}_{1}$-calculus on UMD spaces. One might expect a finer division of functional calculus theorems within the class of UMD spaces, and on $\Ellp$-spaces one might hope that functional calculus properties improve as $p$ tends to $2$. The latter is indeed the case for symmetric contraction semigroups on $\Ellp$-spaces, cf.~\cite{Carbonaro-Dragicevic17, Cowling83}.

In the present article we aim to address the issues above. We study $\HT^{\infty}$-calculus for generators of $C_{0}$-groups in terms of the type and cotype of the underlying Banach space (see Definition \ref{type and cotype}). Our main result, proved as Theorem \ref{main theorem}, is as follows. (For the real interpolation space $\DAPQ$ see \eqref{real interpolation}).

\begin{theorem}\label{main result introduction}
Let $-\ui A$ generate a $C_{0}$-group $(U(s))_{s\in\R}\subseteq\La(X)$ on a Banach space $X$ with type $p\in[1,2]$ and cotype $q\in[2,\infty)$. Let $\w>\theta(U)$. Then there exists a constant $C\geq 0$ such that $\DAPQBIG\subseteq\D(f(A))$ and
\begin{align*}
\norm{f(A)x}_{X}\leq C\norm{f}_{\HT^{\infty}\!(\St_{\w})}\norm{x}_{\DAPQ}
\end{align*}
for all $f\in\HT^{\infty}\!(\St_{\w})$ and $x\in \DAPQ$.
\end{theorem}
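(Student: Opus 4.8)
The plan is to prove this via a transference argument, reducing the operator-theoretic statement to a Fourier-multiplier estimate on vector-valued function spaces where type and cotype can be brought to bear. First I would use the standard transference principle for $C_0$-groups (in the style of Coifman--Weiss, as adapted by Haase and others): for a suitable class of test functions $f$ whose Fourier transforms are nice, one writes $f(A) = \int_{\R} \widehat{k}(s)\, U(s)\, \ud s$ for an appropriate $k$, and then estimates $\norm{f(A)x}_X$ by comparing it against the action of the same multiplier on the orbit $s \mapsto U(s)x$, at the cost of the exponential weight $\ue^{\w\abs{s}}$ coming from $\theta(U) < \w$. The point is that the resulting bound is governed by the $\HT^{\infty}(\St_\w)$-norm of $f$ \emph{provided} one can control the relevant vector-valued Fourier multiplier norm — and this is exactly where the geometry of $X$ enters.

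The key step is the multiplier estimate: on a Banach space with type $p$ and cotype $q$, a bounded holomorphic function on a strip (equivalently, after an exponential change of variables, a bounded holomorphic function on a horizontal strip around $\R$, i.e.\ a symbol with holomorphic extension and uniform bounds) defines a bounded Fourier multiplier not on $\Ell^r(\R;X)$ itself, but as a map into $X$ after pairing against functions in a Besov-type space $\Be^{1/p - 1/q}_{r,1}(\R;X)$ — this is the function-space analogue of the loss of $\tfrac{1}{p}-\tfrac{1}{q}$ derivatives, and it is the mechanism behind the appearance of $\DAPQ$. Concretely, I expect the argument to factor through an embedding of the form $\Be^{1/p}_{p,1}(\R;X) \hookrightarrow (\text{something})$ combined with the dual cotype-$q$ statement, using Fourier type or the Kwapień-type characterization; alternatively, one invokes the $R$-boundedness of the rescaled group combined with a Mihlin-type multiplier theorem with the sharp smoothness index $1/p - 1/q$. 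The real interpolation space $\DAPQ = (X, \D(A))_{1/p-1/q, 1}$ appears on the operator side precisely as the image of this Besov space under the "evaluation at the orbit" map, since $x \in \DAPQ$ if and only if $s \mapsto U(s)x$ lies in the right local Besov class near $s=0$ with the correct interpolation exponent.

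The steps, in order, would be: (1) reduce to $f$ in a dense subclass (say $\HT^{\infty}(\St_{\w'})$ for $\w' < \w$ with additional decay, so that all integrals converge absolutely) via an approximation/Fatou argument, so the final estimate extends by density and the closedness of $f(A)$; (2) set up the transference identity expressing $f(A)x$ as an integral of $U(s)x$ against a kernel determined by $f$, tracking the exponential weight; (3) prove the core harmonic-analysis lemma: bounded holomorphic symbols on a strip act boundedly from the weighted vector-valued Besov space $\Be^{1/p-1/q}_{1}(\R, \ue^{\w\abs{s}}\ud s; X)$ into $X$, with norm controlled by $\norm{f}_{\HT^{\infty}(\St_\w)}$, using type $p$ and cotype $q$; (4) identify $\norm{s \mapsto U(s)x}$ in this weighted Besov norm with $\norm{x}_{\DAPQ}$ (up to constants), using the group law and the real-interpolation description of $\DAPQ$; (5) assemble and conclude.

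The main obstacle I anticipate is step (3): proving the sharp multiplier bound with the exact smoothness index $1/p - 1/q$ and no logarithmic loss. Naively one gets such estimates with a loss (e.g.\ an extra factor of $\log$, as in the $\HT^\infty_1$-results of Haase and Haase--Rozendaal), and squeezing out the sharp index requires genuinely using both type $p$ \emph{and} cotype $q$ simultaneously — presumably via a randomized/Rademacher sum decomposition of the multiplier into frequency-localized pieces (Littlewood--Paley), applying type on one side and cotype on the other, and summing a geometric-type series that converges exactly because the total loss is $(1/p - 1/q)$ derivatives. Getting the bookkeeping right so that the holomorphic (rather than merely Mihlin-smooth) nature of $f$ on the strip gives the needed decay of the localized pieces — and so that the constant depends only on $\norm{f}_{\HT^\infty(\St_\w)}$ and not on higher derivatives — is the delicate heart of the proof. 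A secondary technical point is handling the exponential weight throughout without it interfering with the Littlewood--Paley/Fourier-type machinery, which I would manage by absorbing it into a shift of the strip and working with the holomorphic extension directly.
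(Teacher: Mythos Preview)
Your proposal is correct and follows essentially the same route as the paper: a transference factorization $U_{\mu} = P \circ L_{\mu_{\omega}} \circ \iota$ through vector-valued Besov spaces, combined with the Fourier multiplier bound $L_{\mu_{\omega}}\colon \Be^{1/p-1/q}_{p,1}(\R;X)\to \Ell^{q}(\R;X)$ coming from type and cotype (which the paper imports as a black box from Rozendaal--Veraar), followed by an approximation argument for general $f\in\HT^{\infty}(\St_{\w})$. The paper resolves your two technical worries cleanly: the exponential weight is absorbed into the measure via $\mu_{\omega}(\ud s)=\cosh(\omega s)\,\mu(\ud s)$ so that no weighted Besov spaces are needed, and your step (4) is obtained not by a direct orbit-norm identification but by interpolating between the elementary bounds $\iota\colon X\to \Ell^{p}(\R;X)$ and $\iota\colon \D(A)\to \W^{1,p}(\R;X)$, using $\Be^{1/p-1/q}_{p,1}(\R;X)=(\Ell^{p}(\R;X),\W^{1,p}(\R;X))_{1/p-1/q,1}$.
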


Under additional geometric assumptions we improve Theorem \ref{main result introduction}. Indeed, in Theorem \ref{second main theorem} we show that, if $X$ is isomorphic to a complemented subspace of a $p$-convex and $q$-concave Banach lattice for $p\in[1,2]$ and $q\in[2,\infty)$, then for each $\lambda>\w>\theta(U)$ there exists a constant $C\geq 0$ such that $\D((\lambda+\ui A)^{-\frac{1}{p}+\frac{1}{q}})\subseteq\D(f(A))$ and
\begin{align}\label{second main estimate introduction}
\norm{f(A)x}_{X}\leq C\norm{f}_{\HT^{\infty}\!(\St_{\w})}\|(\lambda+\ui A)^{-\frac{1}{p}+\frac{1}{q}}x\|_{X}
\end{align}
for all $f\in\HT^{\infty}\!(\St_{\w})$ and $x\in \D((\lambda+\ui A)^{-\frac{1}{p}+\frac{1}{q}})$.

One might say that Theorem \ref{main result introduction} shows that each generator $-\ui A$ of a $C_{0}$-group on a Banach space $X$ with type $p$ and cotype $q$ has a bounded $\HT^{\infty}$-calculus from $\DAPQ$ to $X$, and $A$ has a bounded $\HT^{\infty}$-calculus from $\D((\lambda+\ui A)^{-\frac{1}{p}+\frac{1}{q}})$ to $X$ under additional assumptions on $X$. Since $\frac{1}{p}-\tfrac{1}{q}=0$ if and only if $X$ is isomorphic to an $L^{2}$-space, \eqref{second main estimate introduction} recovers as a special case \eqref{Hinfty estimate}, the main result of \cite{Boyadzhiev-deLaubenfels94}. 

The interpolation spaces $\D_{A}(\theta,1)$ and the fractional domains $\D((\lambda+\ui A)^{-\theta})$ increase as $\theta$ tends to zero. Therefore the statements in this paper show that many functional calculus properties depend in a quantitative manner on how close the geometry of the underlying space is to that of a Hilbert space. Since each UMD space has non-trivial type and finite cotype, Theorem \ref{main result introduction} provides a scale of functional calculus results on UMD spaces. 

For $(\Omega,\mu)$ a measure space and $p\in[1,\infty)$, the space $\Ellp(\Omega,\mu)$ is a $p$-convex and $p$-concave Banach lattice with type $\min(p,2)$ and cotype $\max(p,2)$. Hence our results show that the functional calculus properties of group generators on $\Ellp$-spaces improve as $p$ tends to $2$.  It should be noted that \eqref{second main estimate introduction} holds without any assumptions on the Banach space if $\tfrac{1}{p}-\tfrac{1}{q}\geq \tfrac{1}{2}$ (see Remark \ref{rem:trivial results}). Therefore most of the results in Sections \ref{results for groups} and \ref{sectorial and cosine} are only of interest on $L^{p}$-spaces when $p\in(1,\infty)$. 

In Proposition \ref{functions with decay} we deduce from Theorem \ref{main result introduction} that each $f\in\HT^{\infty}\!(\St_{\w})$ with polynomial decay of order $\alpha>\frac{1}{p}-\frac{1}{q}$ at infinity satisfies $f(A)\in\La(X)$. Under the assumptions of \eqref{second main estimate introduction} the case $\alpha=\frac{1}{p}-\frac{1}{q}$ is attained. 

It was shown by McIntosh in \cite{McIntosh86} that, on Hilbert spaces, sectorial operators with bounded imaginary powers have a bounded sectorial $\HT^{\infty}$-calculus. It is also known that on general Banach spaces (even on $L^{p}$-spaces) an operator with bounded imaginary powers need not have a bounded $\HT^{\infty}$-calculus. In Theorem \ref{BIP theorem} we quantify the gap between bounded imaginary powers and a bounded $\HT^{\infty}$-calculus, by showing that each sectorial operator $A$ with bounded imaginary powers has a bounded sectorial $\HT^{\infty}$-calculus from $\DLOGPQBIG$ to $X$ if the underlying Banach space $X$ has type $p\in[1,2]$ and cotype $q\in[2,\infty)$. As in the classical case of a bounded $\HT^{\infty}$-calculus on $X$, one obtains from this an unconditionality result and square function estimates.

For generators of cosine functions we derive similar results as for $C_{0}$-groups. In particular, in Theorem \ref{cosine function result} we show that each generator $-A$ of a cosine function on a Banach space $X$ with type $p$ and cotype $q$ has a bounded parabola-type $\HT^{\infty}$-calculus from $\D_{A}(\tfrac{1}{2}(\tfrac{1}{p}-\tfrac{1}{q}),1)$ to $X$. 

We extend Theorem \ref{main result introduction} and \eqref{second main estimate introduction} to operator-valued functional calculi. Then in Theorem \ref{R-bounded calculus} we show that $A$ in fact has an $R$-bounded $\HT^{\infty}$-calculus from $\D_{A}(\frac{1}{p}-\frac{1}{q},1)$ to $X$ if $X$ additionally has property $(\alpha)$. In Theorem \ref{second R-bounded calculus} we obtain an $R$-bounded version of \eqref{second main estimate introduction}. These results are sharp, as Example \ref{sharpness example} shows.

To indicate the use of Theorem \ref{main result introduction} we give an application to the study of numerical approximation methods for the solutions to evolution equations. In Proposition \ref{convergence rates} we consider a method of rational approximation proposed in \cite{JaNeOz12,NeOoSa13}, and for exponentially stable groups on $\Ell^{p}$-spaces we improve the rates of convergence which were obtained in \cite{Egert-Rozendaal13}. In Corollary \ref{other approximation methods} we show that many rational approximation methods of exponentially stable $C_{0}$-groups converge strongly on $\D_{A}(\tfrac{1}{p}-\tfrac{1}{q},1)$ if the underlying space $X$ has type $p\in[1,2]$ and cotype $q\in[2,\infty)$.

To prove Theorem \ref{main result introduction} we use transference principles going back to \cite{Calderon68,Coifman-Weiss77,BeGiMu89}. The transference technique has been applied to functional calculus theory in \cite{Cowling83}, \cite{Hieber-Pruss98} and \cite{Haase09,Haase11,Haase-Rozendaal13,Haase-Rozendaal16}. In \cite{Haase-Rozendaal16} an interpolation version of the transference principle for unbounded groups from \cite{Haase09} was established. This transference principle was then combined with a theorem about Fourier multipliers on vector-valued Besov spaces. In the present paper we also obtain a transference principle involving vector-valued Besov spaces, but we combine it with a theorem about Fourier multipliers between distinct Besov spaces. For \eqref{second main estimate introduction} we use statements about Fourier multipliers from $\Ellp$ to $\Ellq$ which were obtained recently in \cite{Rozendaal-Veraar17,Rozendaal-Veraar18}. We suspect that it is possible to apply these Fourier multiplier theorems to the transference principle from \cite{Haase-Rozendaal13} for $C_{0}$-semigroups, but this matter will not be explored in the present article.

This paper is organized as follows. In Section \ref{functional calculus and Fourier multipliers} we discuss some of the basics of functional calculus theory, vector-valued Besov spaces and the notions of type and cotype. We then state two Fourier multiplier results which are vital for that which follows. In Section \ref{transference principles} we establish two new transference principles for $C_{0}$-groups. These are used in Section \ref{results for groups} to prove Theorem \ref{main result introduction} and \eqref{second main estimate introduction}, as well as several corollaries for $C_{0}$-groups. In Section \ref{sectorial and cosine} we obtain results for sectorial operators and generators of cosine functions, and in Section \ref{operator-valued calculus} we extend the results from previous sections to $R$-bounded operator-valued calculi. Finally, in Section \ref{rational approximation} we give an application  to the theory of rational approximation schemes.

\subsection{Notation and terminology}\label{notation and terminology}

We write $\N:=\left\{1,2,3,\ldots\right\}$ for the natural numbers and $\N_{0}:=\N\cup\left\{0\right\}$. We write $\C_{+}$ for the open right half-plane of complex numbers $z\in\C$ with $\Real(z)>0$, and $\C_{-}:=\C\setminus\overline{\C_{+}}$.

We denote nonzero Banach spaces over the complex numbers by $X$ and $Y$. The space of bounded linear operators from $X$ to $Y$ is $\La(X,Y)$, and $\La(X):=\La(X,X)$. The domain of a closed operator $A$ on $X$ is $\D(A)$, a Banach space with the norm
\begin{align*}
\norm{x}_{\D(A)}:=\norm{x}_{X}+\norm{Ax}_{X}\qquad(x\in \D(A)).
\end{align*}
The spectrum of $A$ is $\sigma(A)$ and the resolvent set $\rho(A):=\C\setminus \sigma(A)$. We write $R(\lambda,A):=(\lambda-A)^{-1}$ for the resolvent operator of $A$ at $\lambda\in\rho(A)$.

For $p\in[1,\infty]$ and $(\Omega,\mu)$ a measure space, $\Ellp(\Omega;X)$ denotes the Bochner space of equivalence classes of $p$-integrable $X$-valued functions on $\Omega$. We often write $\norm{\cdot}_{p}=\norm{\cdot}_{\Ellp(\R;\C)}$. For $n\in\N_{0}$ we let $\Wnp(\R;X)$ be the Sobolev space of $n$ times weakly differentiable $f\in\Ellp(\R;X)$ such that $f^{(n)}\in\Ellp(\R;X)$. The H\"{o}lder conjugate of $p$ is denoted by $p'$ and is defined by $1=\frac{1}{p}+\frac{1}{p'}$.

We let $\M(\R)$ denote the space of complex Borel measures on $\R$ with the total variation norm. For $\w\geq 0$ we let $\M_{\w}(\R)$ be the convolution algebra of $\mu\in\M(\R)$ of the form $\mu(\ud s)=\ue^{-\w \abs{s}}\nu(\ud s)$ for some $\nu\in\M(\R)$, with $\|\mu\|_{\M_{\w}\!(\R)}:=\|\ue^{\w\abs{\cdot}}\mu\|_{\M(\R)}$.

For $\Omega\neq \emptyset$ open in $\C$, we denote by $\HT^{\infty}\!(\Omega)$ the space of bounded holomorphic functions $f:\Omega\to\C$, a Banach algebra with the norm
\begin{align*}
\norm{f}_{\HT^{\infty}\!(\Omega)}:=\sup_{z\in\Omega}\,\abs{f(z)}\qquad(f\in\HT^{\infty}\!(\Omega)).
\end{align*}

The class of $X$-valued rapidly decreasing smooth functions on $\R$ is $\Sw(\R;X)$, and the space of $X$-valued tempered distributions is $\Sw'(\R;X)$. The Fourier transform of
$\Phi\in\Sw'(\R;X)$ is $\F\Phi$. If
$\mu\in\M_{\w}(\R)$ for $\w>0$ then $\F\mu\in \HT^{\infty}\!(\St_{\w})$ is given by
\begin{align*} 
\F\mu(z):=\int_{\R}\ue^{-\ui z s}\mu(\ud{s})\qquad(z\in\St_{\w}).
\end{align*} 

An interpolation couple is a pair $(X,Y)$ of Banach spaces which are embedded continuously in a Hausdorff topological vector space $Z$.
The real interpolation space of $(X,Y)$ with parameters $\theta\in [0,1]$ and $q\in [1,\infty]$ is denoted by $(X,Y)_{\theta,q}$. If $T:X+Y\rightarrow X+Y$ restricts to a bounded operator on $X$ and $Y$ then 
\begin{align}\label{interpolation inequality}
\norm{T}_{\La((X,Y)_{\theta,q})}\leq \norm{T}_{\La(X)}^{1-\theta}
\norm{T}_{\La(Y)}^{\theta}
\end{align}
for all $\theta\in (0,1)$ and $q\in[1,\infty]$. We mainly consider real interpolation spaces for the interpolation couple $(X,\D(A))$, where $A$ is a closed operator on $X$. We write 
\begin{align}\label{real interpolation}
\D_{A}(\theta,q):=(X,\D(A))_{\theta,q}\quad\textrm{and}\quad\norm{x}_{\theta,q}:=\norm{x}_{\D_{A}(\theta,q)}\quad(x\in\D_{A}(\theta,q)).
\end{align} 

For an operator $B$ on $X$ and a continuously embedded space $Y\hookrightarrow X$, the part of $B$ in $Y$ is the operator $B_{Y}$ on $Y$ that satisfies $B_{Y}y=By$ for $y\in \D(B_{Y}):=\left\{z\in \D(B)\cap Y\mid Bz\in Y\right\}$. We write $B_{\theta,q}:=B_{\D_{A}(\theta,q)}$ for $\theta\in[0,1]$ and $q\in[1,\infty]$.

\section{Functional calculus and Fourier multipliers}\label{functional calculus and Fourier multipliers}

In this section we present the background on functional calculus and Fourier multipliers which will be needed for the rest of the article.

\subsection{Functional calculus}\label{functional calculus}

We assume that the reader is familiar with the basics of the theory of $C_{0}$-groups from \cite{Engel-Nagel00}. For more on the functional calculus for generators of $C_{0}$-groups see \cite[Chapter 4]{Haase06a}.

An operator $A$ on a Banach space $X$ is a \emph{strip-type operator} of \emph{height} $\w_{0}\geq 0$ if $\sigma(A)\subseteq \overline{\St_{\w_{0}}}$, where $\St_{0}:=\R$, and $\sup_{\lambda\in\C\setminus \St_{\w}}\norm{R(\lambda,A)}<\infty$ for all $\w>\w_{0}$. For $\w>0$ set
\begin{align*}
\E(\St_{\w}):=\left\{g\in \HT^{\infty}\!(\St_{\w})\left| \text{$g(z)\in
O(\abs{z}^{-\alpha})$ for some $\alpha>1$ as $\abs{\Real(z)}\rightarrow \infty$}\right.\right\}.
\end{align*}
The \emph{strip-type functional calculus} for a strip-type operator $A$ of height $\w_{0}$ is defined as follows. First, operators $f(A)\in \La(X)$ are associated with $f\in \E(\St_{\w})$ for $\w>\w_{0}$:
\begin{align}\label{Cauchy integral}
f(A):=\frac{1}{2\pi \ui}\int_{\partial \St_{\w'}}f(z)R(z,A)\,\ud z.
\end{align}
Here $\partial \St_{\w'}$ is the positively oriented boundary of $\St_{\w'}$ for $\w'\in (\w_{0},\w)$. This procedure is independent of the choice of $\w'$ by Cauchy's theorem, and yields an algebra homomorphism $\E(\St_{\w})\rightarrow \La(X)$, $f\mapsto f(A)$. The definition of $f(A)$ is extended to a larger class of functions by \emph{regularization}:
\begin{align}\label{regularization}
f(A):=e(A)^{-1}(ef)(A)
\end{align}
if there exists an $e\in \E(\St_{\w})$ with $e(A)$ injective and $ef\in \E(\St_{\w})$. Then $f(A)$ is a closed unbounded operator on $X$, and the definition of $f(A)$ is independent of the choice of the regularizer $e$.  Each $f\in \HT^{\infty}\!(\St_{\w})$ is regularizable by the function $z\mapsto(\lambda-z)^{-2}$ for $\abs{\Imag(\lambda)}>\w$.

Let $-\ui A$ generate a $C_{0}$-group $(U(s))_{s\in\R}\subseteq\La(X)$. Then $A$ is a strip-type operator of height $\theta(U)$, with $\theta(U)$ as in \eqref{group type}. Let $M\geq 1$ and $\w\geq 0$ be such that $\norm{U(s)}\leq M\ue^{\w\abs{s}}$ for all $s\in\R$, and for $\mu\in\M_{\w}(\R)$ set
\begin{align}\label{Hille-Phillips calculus}
U_{\mu}x:=\int_{\R}U(s)x\,\ud\mu(s)\qquad(x\in X).
\end{align}
The mapping $\mu\mapsto U_{\mu}$ is an algebra homomorphism $\M_{\w}(\R)\to\La(X)$ called the \emph{Hille-Phillips calculus}, and the following lemma from \cite[Lemma 2.2]{Haase07} shows that this calculus is consistent with the strip-type calculus for $A$.

\begin{lemma}\label{decay implies fourier transform}
Let $-\ui A$ generate a $C_{0}$-group $(U(s))_{s\in\R}\subseteq\La(X)$ on a Banach space $X$, and let $\w> \alpha>\theta(U)$. Then each $f\in \E(\St_{\w})$ satisfies $f=\F\mu$ and $f(A)=U_{\mu}\in\La(X)$ for some $\mu\in \M_{\alpha}(\R)$. Conversely, if $\mu\in\M_{\alpha}(\R)$ then $f:=\F\mu\in\HT^{\infty}(\St_{\alpha})$ is such that $f(A)=U_{\mu}\in\La(X)$.
\end{lemma}

In fact, it is observed in Remark \ref{rem:trivial results} that the first statement in Lemma \ref{decay implies fourier transform} holds for all $f\in\HT^{\infty}\!(\St_{\w})$ such that $f(z)=O(\abs{z}^{-1/2})$ as $\abs{\Real(z)}\to\infty$.

A fundamental result in functional calculus theory is the Convergence Lemma. The following is a version of this lemma adapted to our setting.

\begin{lemma}[Convergence Lemma]\label{convergence lemma}
Let $A$ be a densely defined strip-type operator of height $\w_{0}\geq 0$ on a Banach space $X$. Let $Y$ be a Banach space continuously embedded in $X$ such that $\D(A^{2})\subseteq Y$ is dense. Let $\w>\w_{0}$ and let $(f_{j})_{j\in J}\subseteq \HT^{\infty}\!(\St_{\w})$ be a net satisfying the following conditions:
\begin{itemize}
\item $\sup_{j\in J}\norm{f_{j}}_{\HT^{\infty}\!(\St_{\w})}<\infty$;
\item $f(z):=\lim_{j}f_{j}(z)$ exists for all $z\in \St_{\w}$;
\item $\sup_{j\in J}\norm{f_{j}(A)}_{\La(Y,X)}<\infty$.
\end{itemize}
Then $f\in\HT^{\infty}\!(\St_{\w})$, $f(A)\in\La(Y,X)$, $f_{j}(A)x\rightarrow f(A)x$ for all $x\in Y$ and 
\begin{align*}
\norm{f(A)}_{\La(Y,X)}\leq \limsup_{j\in J}\norm{f_{j}(A)}_{\La(Y,X)}.
\end{align*}
\end{lemma}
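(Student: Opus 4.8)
The plan is to prove the Convergence Lemma by reducing the general case, in which convergence is tested on the interpolation-type space $Y$, to the classical Convergence Lemma on all of $X$ via the regularization mechanism of the strip-type calculus. First I would fix a regularizer: since $A$ is a strip-type operator of height $\w_{0}<\w$, the function $e(z):=(\lambda-z)^{-2}$ with $\abs{\Imag(\lambda)}>\w$ lies in $\E(\St_{\w})$, $e(A)=R(\lambda,A)^{2}\in\La(X)$ is injective (as $A$ is densely defined), and $ef_{j}\in\E(\St_{\w})$ for each $j$. Then $(ef_{j})(A)=e(A)f_{j}(A)$ on $Y$, because $e(A)\in\La(X)$ commutes with the calculus and $f_{j}(A)$ maps $Y$ into $X$. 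Since $\sup_{j}\norm{f_{j}(A)}_{\La(Y,X)}<\infty$ and $\sup_{j}\norm{e}_{\HT^{\infty}(\St_{\w})}$ is finite, we get $\sup_{j}\norm{(ef_{j})(A)}_{\La(Y,X)}<\infty$. On the other hand $\norm{ef_{j}}_{\HT^{\infty}(\St_{\w})}\leq \norm{e}_{\HT^{\infty}(\St_{\w})}\sup_{j}\norm{f_{j}}_{\HT^{\infty}(\St_{\w})}$ is bounded, and $ef_{j}(z)\to e(z)f(z)=:g(z)$ pointwise on $\St_{\w}$.

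The next step is to upgrade pointwise convergence of the bounded net $(ef_{j})$ to something strong enough to identify the limiting operator. Here I would use Vitali's theorem (or a normal-families argument): a locally uniformly bounded net of holomorphic functions on $\St_{\w}$ that converges pointwise converges locally uniformly, so $g\in\HT^{\infty}(\St_{\w})$ with $\norm{g}_{\HT^{\infty}(\St_{\w})}\leq\norm{e}_{\HT^{\infty}(\St_{\w})}\sup_{j}\norm{f_{j}}_{\HT^{\infty}(\St_{\w})}$, and in particular $f=g/e$ extends holomorphically across the zero of $e$ to a bounded function on $\St_{\w}$ (one checks $\abs{f}=\abs{g}/\abs{e}$ stays bounded because $f$ was already a pointwise limit of the uniformly bounded net $(f_{j})$, so $\abs{f(z)}\leq\sup_{j}\norm{f_{j}}_{\HT^{\infty}(\St_{\w})}$). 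Thus $f\in\HT^{\infty}(\St_{\w})$ and $ef=g\in\E(\St_{\w})$, so by \eqref{regularization} the operator $f(A)=e(A)^{-1}(ef)(A)$ is well defined.

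Now I would invoke the classical Convergence Lemma for the $\E$-calculus to conclude that $(ef_{j})(A)\to(ef)(A)$ in an appropriate strong sense on $X$; in our $Y$-valued formulation one argues directly. Fix $x\in Y$. The net $((ef_{j})(A)x)_{j}$ is bounded in $X$ by $\sup_{j}\norm{(ef_{j})(A)}_{\La(Y,X)}\norm{x}_{Y}$. To see it converges, note that for $y\in\D(A^{2})$ one has the absolutely convergent Cauchy-integral representation \eqref{Cauchy integral} for $(ef_{j})(A)y$ (after moving the contour, using the $O(\abs{z}^{-2})$ decay of $ef_{j}$ inherited from $e$ and the uniform sup-norm bound), and dominated convergence on $\partial\St_{\w'}$ gives $(ef_{j})(A)y\to(ef)(A)y$. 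Since $\D(A^{2})\subseteq Y$ is dense and the operators $(ef_{j})(A)$ are uniformly bounded from $Y$ to $X$, an $\varepsilon/3$-argument extends the convergence to all $x\in Y$: $(ef_{j})(A)x\to (ef)(A)x$ in $X$, with $\norm{(ef)(A)x}_{X}\leq\limsup_{j}\norm{(ef_{j})(A)}_{\La(Y,X)}\norm{x}_{Y}$. Applying the bounded operator $e(A)^{-1}$ is not allowed directly since it is unbounded, so instead one observes that $f_{j}(A)x$ is itself a bounded net in $X$ (hypothesis three) and $e(A)f_{j}(A)x=(ef_{j})(A)x\to(ef)(A)x=e(A)f(A)x$; since $e(A)$ is injective and closed, and $f_{j}(A)x$ is bounded, passing to a weak cluster point and using closedness of $e(A)$ identifies every cluster point as $f(A)x$, whence $f_{j}(A)x\to f(A)x$ and $\norm{f(A)x}_{X}\leq\limsup_{j}\norm{f_{j}(A)}_{\La(Y,X)}\norm{x}_{Y}$. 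This gives $f(A)\in\La(Y,X)$ with the stated norm bound.

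The main obstacle I anticipate is the last point: transferring the convergence from the regularized operators $(ef_{j})(A)$ back to $f_{j}(A)$ without an inverse that is bounded. The clean way is to exploit that hypothesis three already provides a uniform bound on $\norm{f_{j}(A)}_{\La(Y,X)}$, so $(f_{j}(A)x)_{j}$ lives in a fixed ball of $X$; combined with $e(A)$ being injective and bounded and $(ef_{j})(A)x$ converging, a routine closedness/cluster-point argument pins down the limit. Everything else — choosing the regularizer, the Vitali step, and the dominated-convergence argument on the contour for $y\in\D(A^{2})$ — is standard, and the density of $\D(A^{2})$ in $Y$ is exactly what makes the $\varepsilon/3$-reduction go through.
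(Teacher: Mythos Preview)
Your overall architecture (Vitali for the limit function, dominated convergence on the Cauchy integral, density of $\D(A^{2})$ in $Y$) is the same as the paper's, but the detour you take at the end introduces a genuine gap. After establishing $(ef_{j})(A)x\to(ef)(A)x$ for $x\in Y$, you try to ``unregularize'' by a weak cluster point argument: from boundedness of $(f_{j}(A)x)_{j}$ and $e(A)f_{j}(A)x\to e(A)f(A)x$ you want to conclude $f_{j}(A)x\to f(A)x$. This fails on two counts. First, in a general Banach space a bounded net need not have any weak cluster point (weak compactness of balls is equivalent to reflexivity). Second, even if every weak cluster point were $f(A)x$, that would only give weak convergence, not the strong convergence you claim.

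The fix is to reverse the order of the two extension steps, which is exactly what the paper does. For $x\in\D(A^{2})$ one has the identity
\[
f_{j}(A)x=\Big(\frac{f_{j}(\cdot)}{(\lambda-\cdot)^{2}}\Big)(A)\,(\lambda-A)^{2}x,
\]
and since $(\lambda-A)^{2}x\in X$ and $f_{j}(\lambda-\cdot)^{-2}\in\E(\St_{\w})$ with a uniform sup bound and locally uniform convergence to $f(\lambda-\cdot)^{-2}$, dominated convergence in the Cauchy integral gives $f_{j}(A)x\to f(A)x$ directly on $\D(A^{2})$, together with $\norm{f(A)x}_{X}\leq\limsup_{j}\norm{f_{j}(A)}_{\La(Y,X)}\norm{x}_{Y}$. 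Now the $\varepsilon/3$ argument (using hypothesis three and density of $\D(A^{2})$ in $Y$) extends the strong convergence to all of $Y$, and closedness of $f(A)$ yields $Y\subseteq\D(f(A))$ with the stated norm bound. No cluster points are needed.
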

\begin{proof}
The proof is similar to the proofs of \cite[Proposition 5.1.7]{Haase06a} and \cite[Theorem 3.1]{BaHaMu13}. Vitali's theorem for nets from \cite[Theorem 2.1]{Arendt-Nikolski2000} implies that $f\in\HT^{\infty}\!(\St_{\w})$ and that $f_{j}\to f$ uniformly on compact subsets of $\St_{\w}$. Let $\lambda>\w$. Applying the dominated convergence theorem to \eqref{Cauchy integral} yields
\begin{align*}
f_{j}(A)x=\left(\frac{f_{j}(\cdot)}{(\ui\lambda-\cdot)^{2}}\right)(A)(\ui\lambda-A)^{2}x\to \left(\frac{f(\cdot)}{(\ui\lambda-\cdot)^{2}}\right)(A)(\ui\lambda-A)^{2}x=f(A)x
\end{align*}
and 
\begin{align*}
\norm{f(A)x}_{X}\leq \limsup_{j}\norm{f_{j}(A)}_{\La(Y,X)}\norm{x}_{Y}
\end{align*}
for all $x\in \D(A^{2})$. The required statements now follow since $\D(A^{2})\subseteq Y$ is dense and $f(A)$ is a closed operator on $X$.
\end{proof}

\subsection{Fourier multipliers and Banach space geometry}\label{Fourier multipliers and cotype}

In this section we treat Fourier multiplier operators under geometric assumptions on the underlying space. For more on the prerequisite notions from Banach space geometry, as well as for proofs of some of the statements below, see e.g.~\cite{DiJaTo95,HyNeVeWe16,HyNeVeWe17,Lindenstrauss-Tzafriri79}. Recall that a standard complex Gaussian random variable is a random variable $\gamma$ on a probability space $(\Omega,\mathbb{P})$ such that $\gamma=\frac{\gamma_{r}+\ui\gamma_{i}}{\sqrt{2}}$ for independent standard real Gaussian random variables $\gamma_{r},\gamma_{i}$ on $\Omega$. A \emph{Gaussian sequence} is a sequence of independent standard complex Gaussian random variables. 

\begin{definition}\label{type and cotype}
Let $X$ be a Banach space, $(\gamma_{k})_{k\in\N}$ a Gaussian sequence on a probability space $(\Omega,\mathbb{P})$ and $p\in[1,2]$, $q\in[2,\infty]$.
\begin{itemize}
\item $X$ has \emph{(Gaussian) type} $p$ if there exists a constant $C\geq 0$ such that for all $n\in\N$ and $x_{1},\ldots, x_{n}\in X$,
\begin{align}\label{type}
\Big(\mathbb{E}\Big\|\sum_{k=1}^{n}\gamma_{k}x_{k}\Big\|^{2}\Big)^{1/2}\leq C\Big(\sum_{k=1}^{n}\|x_{k}\|^{p}\Big)^{1/p}.
\end{align}
\item $X$ has \emph{(Gaussian) cotype} $q$ if there exists a constant $C\geq 0$ such that for all $n\in\N$ and $x_{1},\ldots, x_{n}\in X$,
\begin{align}\label{cotype}
\Big(\sum_{k=1}^{n}\|x_{k}\|^{q}\Big)^{1/q}\leq C\Big(\mathbb{E}\Big\|\sum_{k=1}^{n}\gamma_{k}x_{k}\Big\|^{2}\Big)^{1/2},
\end{align}
with the obvious modification for $q=\infty$.
\end{itemize}
\end{definition}

The minimal constants $C$ in \eqref{type} and \eqref{cotype} are called the \emph{Gaussian type $p$ constant} and the \emph{Gaussian cotype $q$ constant} and will be denoted by $\tau_{p,X}$ and $c_{q,X}$. We say that $X$ has \emph{nontrivial type} if $X$ has type $p\in(1,2]$ and that $X$ has \emph{finite cotype} if $X$ has cotype $q\in[2,\infty)$. By the Kahane-Khintchine inequalities, one may replace the exponent $2$ in \eqref{type} and \eqref{cotype} by any $r\in[1,\infty)$. This does not change the properties of type and cotype, only the minimal constants in \eqref{type} and \eqref{cotype}.

It is common to replace the Gaussian sequence in Definition \ref{type and cotype} by a \emph{Rademacher sequence}, i.e.\ a sequence $(r_{k})_{k\in\N}$ of independent complex random variables on a probability space $(\Omega,\mathbb{P})$ that are uniformly distributed on $\{z\in\C\mid \abs{z}=1\}$. This does not change the class of spaces under consideration, only the minimal constants in \eqref{type} and \eqref{cotype}. We choose to work with Gaussian sequences because the constants $\tau_{p,X}$ and $c_{q,X}$ occur in Proposition \ref{Fourier multipliers under type and cotype}. 

Every Banach space $X$ has type $p=1$ and cotype $q=\infty$, with $\tau_{1,X}=c_{\infty,X}=1$. If $X$ has type $p$ and cotype $q$ then it has type $r$ with $\tau_{r,X}\leq \tau_{p,X}$ for all $r\in[1,p]$ and cotype $s$ with $c_{s,X}\leq c_{q,X}$ for all $s\in[q,\infty]$. A Banach space $X$ has type $p=2$ and cotype $q=2$ if and only if $X$ is isomorphic to a Hilbert space, by Kwapie\'{n}'s result \cite{Kwapien72}. A Banach space $X$ with nontrivial type has finite cotype. Each UMD space has nontrivial type. Let $X$ be a Banach space with type $p\in[1,2]$ and cotype $q\in[2,\infty)$, let $r\in[1,\infty)$ and let $\Omega$ be a measure space. Then $\Ellr(\Omega;X)$ has type $\min(p,r)$ and cotype $\max(q,r)$ with $\tau_{\min(p,r),\Ellr(\Omega;X)}\leq C_{p,r}\tau_{p,X}$ and $c_{\max(q,r),\Ellr(\Omega;X)}\leq C_{q,r}c_{q,X}$ for constants $C_{p,r},C_{q,r}\geq0$ coming from the Kahane-Khintchine inequalities. 

Let $\psi\in \Ce^{\infty}\!(\R)$ be such that $\psi\geq0$, $\supp(\psi)\subseteq[\frac{1}{2},2]$ and $\sum_{k=-\infty}^{\infty}\psi(2^{-k}s)=1$ for all $s\in(0,\infty)$. For $k\in\N$ and $s\in\R$ let $\ph_{k}(s):=\psi(2^{-k}\abs{s})$ and $\ph_{0}(s):=1-\sum_{k=1}^{\infty}\ph_{k}(s)$. Let $X$ be a Banach space and let $p,q\in[1,\infty]$ and $r\in\R$. The \emph{(inhomogeneous) Besov space} $\Be^{r}_{p,q}(\R;X)$ is the space of all $f\in\Sw'(\R;X)$ such that
\begin{align*}
\norm{f}_{\Be^{r}_{p\!,q}(\R;X)}:=\Big\|\Big(2^{kr}\big\|\F^{-1}\ph_{k}\ast f\big\|_{\Ellp(\R;X)}\Big)_{k\in\N_{0}}\Big\|_{\ell^{q}}<\infty,
\end{align*}
endowed with the norm $\norm{\cdot}_{\Be^{r}_{p\!,q}(\R;X)}$. Then $\Be^{r}_{p,q}(\R;X)$ is a Banach space, $\Sw(\R;X)\subseteq \Be^{r}_{p,q}(\R;X)$ is dense if $p,q<\infty$, and a different choice of $\psi$ yields an equivalent norm on $\Be^{r}_{p,q}(\R;X)$. More details on vector-valued Besov spaces can be found in \cite{Amann97,Schmeisser-Sickel05}.

For $X$ a Banach space and $m\in\Ellinfty(\R;\La(X))$, the \emph{Fourier multiplier operator} $T_{m}:\Sw(\R;X)\to\Sw'(\R;X)$ with \emph{symbol} $m$ is given by
\begin{align*}
T_{m}(f):=\F^{-1}\left(m\cdot\F f\right)\qquad(f\in\Sw(\R;X)).
\end{align*}
For each $\mu\in\M(\R)$, 
\begin{align}\label{convolution}
L_{\mu}(f):=\mu\ast f\qquad(f\in\Sw(\R;X))
\end{align}
defines a Fourier multiplier operator with symbol $\F\mu\in\Ellinfty(\R)$. We say that $X$ is a \emph{UMD space} if the function $\ind_{[0,\infty)}-\ind_{(-\infty,0)}$ is the symbol of a bounded Fourier multiplier on $\Ell^{2}(\R;X)$.

Let $X$ and $Y$ be Banach spaces and let $(r_{k})_{k\in\N}$ be a Rademacher sequence on a probability space $(\Omega,\mathbb{P})$. A collection $\mathcal{T}\subseteq\La(X,Y)$ is \emph{$R$-bounded} if there exists a constant $C\geq 0$ such that, for all $n\in\N$, $x_{1},\ldots, x_{n}\in X$ and $T_{1},\ldots, T_{n}\in\mathcal{T}$, 
\begin{align}\label{R-bound}
\Big(\mathbb{E}\Big\|\sum_{k=1}^{n}r_{k}T_{k}x_{k}\Big\|_{Y}^{2}\Big)^{1/2}\leq C\Big(\mathbb{E}\Big\|\sum_{k=1}^{n}r_{k}x_{k}\Big\|_{X}^{2}\Big)^{1/2}.
\end{align}
The minimal constant $C$ in \eqref{R-bound} is denoted by $R(\mathcal{T})$. If we want to specify the underlying spaces $X$ and $Y$ then we write $R_{X,Y}(\mathcal{T})=R(\mathcal{T})$, and we write $R_{X}(\mathcal{T})=R(\mathcal{T})$ if $\mathcal{T}\subseteq\La(X)$. By the Kahane contraction principle, each uniformly bounded collection $\mathcal{T}\subseteq \C$ is $R$-bounded as a subset of $\La(X)$, with $R_{X}(\mathcal{T})$ equal to the uniform bound of $\mathcal{T}$.

The following result was obtained in \cite[Theorem 1.1]{Rozendaal-Veraar17}. 

\begin{proposition}\label{Fourier multipliers under type and cotype}
Let $X$ be a Banach space with type $p\in[1,2]$ and cotype $q\in[2,\infty]$. Let $r\in\R$, $s\in[1,\infty]$ and $m\in\Ellinfty(\R;\La(X))$ be such that $\{m(s)\mid s\in\R\}\subseteq\La(X)$ is $R$-bounded. Then $T_{m}\in\La(\Be^{r+1/p-1/q}_{p,s}(\R;X),\Be^{r}_{q,s}(\R;X))$ and
\begin{align*}
\norm{T_{m}}_{\La(\Be^{r+1/p-1/q}_{p,s}(\R;X),\Be^{r}_{q,s}(\R;X))}\leq 4^{\frac{1}{p}-\frac{1}{q}}\tau_{p,X}c_{q,X}\,R_{X}(\{m(s)\mid s\in\R\}).
\end{align*}
\end{proposition}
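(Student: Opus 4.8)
The plan is to reduce the statement to the scalar-multiplier Littlewood--Paley decomposition of the Besov norms and then exploit the mismatch between the $\ell^{p}$-nature of the type inequality and the $\ell^{q}$-nature of the cotype inequality. Concretely, fix $f\in\Sw(\R;X)$, which is dense in $\Be^{r+1/p-1/q}_{p,s}(\R;X)$. By the standard Besov-space technique one may insert a ``fattened'' Littlewood--Paley cut-off: choose $\tilde\ph_{k}$ with $\tilde\ph_{k}\ph_{k}=\ph_{k}$ and $\supp(\tilde\ph_{k})$ contained in a fixed dilate of $\supp(\ph_{k})$, so that $\F^{-1}\ph_{k}\ast T_{m}f=\F^{-1}\ph_{k}\ast T_{m}(\F^{-1}\tilde\ph_{k}\ast f)$. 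The frequency support of $\F^{-1}\tilde\ph_{k}\ast f$ lies in an annulus of size $\sim 2^{k}$, and on such functions the operator $T_{m}$ can be compared to a single resolvent/average of the family $\{m(s)\}$; this is where the $R$-boundedness hypothesis enters, via a Gaussian (or Rademacher) square-function estimate: integrating the vector $m(s)g(s)$ in $s$ and re-expanding by Fourier inversion, one bounds $\|\F^{-1}\ph_{k}\ast T_{m}(\F^{-1}\tilde\ph_{k}\ast f)\|_{\Ell^{q}(\R;X)}$ by $R_{X}(\{m(s)\})$ times a corresponding norm of $\F^{-1}\tilde\ph_{k}\ast f$ in $\Ell^{p}(\R;X)$. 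The key quantitative input is an inequality saying: for $g\in\Ell^{p}(\R;X)$ with Fourier support in an annulus of scale $2^{k}$, one has $\|g\|_{\Ell^{q}(\R;X)}\lesssim 2^{k(1/p-1/q)}\,\tau_{p,X}\,c_{q,X}\,\|g\|_{\Ell^{p}(\R;X)}$ (a Bernstein-type / Nikolskii inequality where the usual Sobolev embedding constant is replaced by the product of the Gaussian type and cotype constants). This is exactly the frequency-localized embedding that produces the shift $1/p-1/q$ in the Besov indices.

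\textbf{The vector-valued Nikolskii inequality.} The heart of the argument is therefore this localized embedding, and I expect it to be the main obstacle. To prove it, write $g$ with $\F g$ supported in $\{2^{k-1}\le|s|\le 2^{k+1}\}$; after rescaling to the unit annulus, one must show $\|g\|_{\Ell^{q}(\R;X)}\lesssim\tau_{p,X}c_{q,X}\|g\|_{\Ell^{p}(\R;X)}$. Expand $g$ in a Gabor/sampling-type frame adapted to the unit frequency annulus: by a Plancherel--Pólya or Shannon-sampling argument, $g$ is controlled in $\Ell^{p}(\R;X)$ from below and $\Ell^{q}(\R;X)$ from above by the $\ell^{p}$, resp.\ $\ell^{q}$, norm of its samples $(g(n))_{n\in\Z}$ \emph{with Gaussian sums inserted}: one has $\|(g(n))_{n}\|_{\ell^{q}(X)}\le c_{q,X}\,(\mathbb E\|\sum_{n}\gamma_{n}g(n)\psi_{n}\|^{2})^{1/2}$ and dually $(\mathbb E\|\sum_{n}\gamma_{n}g(n)\psi_{n}\|^{2})^{1/2}\le\tau_{p,X}\|(g(n))_{n}\|_{\ell^{p}(X)}$, where the intermediate Gaussian average is comparable (uniformly, by translation invariance and the band-limitedness) to both $\|g\|_{\Ell^{q}}$ and $\|g\|_{\Ell^{p}}$ up to absolute constants. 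Chaining these four estimates gives the claimed bound with constant $\lesssim\tau_{p,X}c_{q,X}$. Getting the comparison constants between the continuous $\Ell^{p}$/$\Ell^{q}$ norms and the Gaussian sums over samples clean — so that no spurious dimensional or scale factor survives — is the delicate part, and is presumably where the reference \cite{Rozendaal-Veraar15} does the real work.

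\textbf{Assembling the Besov estimate.} Granting the localized embedding, the rest is bookkeeping. One has
\[
\norm{T_{m}f}_{\Be^{r}_{q,s}(\R;X)}=\norm{\big(2^{kr}\|\F^{-1}\ph_{k}\ast T_{m}f\|_{\Ell^{q}(\R;X)}\big)_{k}}_{\ell^{s}},
\]
and by the displayed frame/$R$-boundedness bound together with the Nikolskii inequality applied to $g=\F^{-1}\tilde\ph_{k}\ast f$ (Fourier-supported at scale $2^{k}$), each term is
\[
2^{kr}\|\F^{-1}\ph_{k}\ast T_{m}f\|_{\Ell^{q}(\R;X)}\lesssim \tau_{p,X}c_{q,X}\,R_{X}(\{m(s)\})\;2^{k(r+1/p-1/q)}\|\F^{-1}\tilde\ph_{k}\ast f\|_{\Ell^{p}(\R;X)}.
\]
Taking $\ell^{s}$-norms in $k$, and using that the fattened pieces $\F^{-1}\tilde\ph_{k}\ast f$ satisfy $\|(2^{k(r+1/p-1/q)}\|\F^{-1}\tilde\ph_{k}\ast f\|_{\Ell^{p}})_{k}\|_{\ell^{s}}\lesssim\|f\|_{\Be^{r+1/p-1/q}_{p,s}(\R;X)}$ (standard, since each $\tilde\ph_{k}$ overlaps only boundedly many $\ph_{j}$), yields
\[
\norm{T_{m}f}_{\Be^{r}_{q,s}(\R;X)}\le \tau_{p,X}c_{q,X}\,R_{X}(\{m(s)\mid s\in\R\})\,\norm{f}_{\Be^{r+1/p-1/q}_{p,s}(\R;X)}
\]
up to tracking that the various absolute constants combine to exactly $1$ in the stated form. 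Density of $\Sw(\R;X)$ then extends the bound to all of $\Be^{r+1/p-1/q}_{p,s}(\R;X)$, giving $T_{m}\in\La(\Be^{r+1/p-1/q}_{p,s}(\R;X),\Be^{r}_{q,s}(\R;X))$ with the asserted norm estimate.
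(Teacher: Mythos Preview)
The paper does not prove this proposition: it is quoted from \cite{Rozendaal-Veraar15} and used as a black box. There is therefore no argument in the present paper to compare your attempt against; you are in effect trying to reconstruct the proof of the cited reference, and you acknowledge as much.

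Your overall architecture --- reduce to a single Littlewood--Paley block and prove a frequency-localized $\Ell^{p}\!\to\Ell^{q}$ estimate carrying the scale factor $2^{k(1/p-1/q)}$ --- is correct. The genuine gap is in how $R$-boundedness enters. Your ``vector-valued Nikolskii inequality'' is stated and argued only for $T_{m}=\mathrm{id}$: you bound $\|g\|_{\Ell^{q}}$ by $\|g\|_{\Ell^{p}}$ for band-limited $g$ via time-side sampling and Gaussian sums over $(g(n))_{n}$. But the proposition requires $\|T_{m}g\|_{\Ell^{q}}\lesssim R_{X}(\{m(s)\})\,\|g\|_{\Ell^{p}}$, and the time-side samples $(T_{m}g)(n)$ are \emph{not} of the form $S_{n}\,g(n)$ for any operators $S_{n}\in\{m(s):s\in\R\}$, so $R$-boundedness cannot be invoked on the sampled sequence; the phrase ``integrating the vector $m(s)g(s)$ in $s$ and re-expanding by Fourier inversion'' does not supply a mechanism. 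What actually works is to pass through a $\gamma$-radonifying norm on the \emph{frequency} side: type~$p$ gives an embedding into $\gamma(\Ell^{2},X)$ with constant $\tau_{p,X}$, cotype~$q$ gives the reverse embedding with constant $c_{q,X}$, and the Kalton--Weis $\gamma$-multiplier theorem says that pointwise multiplication by an $R$-bounded-range function is bounded on $\gamma(\Ell^{2},X)$ with norm exactly the $R$-bound. Chaining these three steps yields the stated inequality with the sharp constant $\tau_{p,X}c_{q,X}R_{X}(\{m(s)\})$ and no stray absolute factors --- something your sampling-and-frame route, with its unspecified $\psi_{n}$ and ``absolute constants combine to exactly $1$'', would not deliver.
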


\begin{corollary}\label{Fourier multiplier measure}
Let $X$ be a Banach space with type $p\in[1,2]$ and cotype $q\in[2,\infty]$, and let $\mu\in\M(\R)$. Then $L_{\mu}\in\La(\Be^{1/p-1/q}_{p,1}(\R;X),\Ellq(\R;X))$ and
\[
\norm{L_{\mu}}_{\La(\Be^{1/p-1/q}_{p,1}(\R;X),\Ellq(\R;X))}\leq 4^{\frac{1}{p}-\frac{1}{q}}\tau_{p,X}c_{q,X}\norm{\F\mu}_{\Ellinfty(\R)}.
\]
\end{corollary}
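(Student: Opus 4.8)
The plan is to deduce Corollary \ref{Fourier multiplier measure} from Proposition \ref{Fourier multipliers under type and cotype} by specializing to the convolution operator $L_{\mu}$ and combining with an elementary embedding of a Besov space into an $\Ellq$-space. First I would observe that, by \eqref{convolution}, $L_{\mu}=T_{m}$ with symbol $m=\F\mu\in\Ellinfty(\R)$, viewed as a scalar-valued (hence $\La(X)$-valued) function via $m(s)=\F\mu(s)\id_{X}$. By the Kahane contraction principle, the set $\{m(s)\mid s\in\R\}\subseteq\C\cdot\id_{X}\subseteq\La(X)$ is $R$-bounded with $R_{X}(\{m(s)\mid s\in\R\})=\sup_{s\in\R}\abs{\F\mu(s)}=\norm{\F\mu}_{\Ellinfty(\R)}$. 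Applying Proposition \ref{Fourier multipliers under type and cotype} with $r=0$ and $s=1$ then gives
\begin{align*}
\norm{L_{\mu}}_{\La(\Be^{1/p-1/q}_{p,1}(\R;X),\Be^{0}_{q,1}(\R;X))}\leq \tau_{p,X}c_{q,X}\norm{\F\mu}_{\Ellinfty(\R)}.
\end{align*}

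The remaining step is to pass from the target space $\Be^{0}_{q,1}(\R;X)$ to $\Ellq(\R;X)$. Here I would invoke the standard continuous embedding $\Be^{0}_{q,1}(\R;X)\hookrightarrow\Ellq(\R;X)$, which holds for all $q\in[1,\infty]$ and any Banach space $X$: writing $f=\sum_{k\in\N_{0}}\F^{-1}\ph_{k}\ast f$ in $\Sw'(\R;X)$ and using the triangle inequality in $\Ellq(\R;X)$ gives $\norm{f}_{\Ellq(\R;X)}\leq\sum_{k\in\N_{0}}\norm{\F^{-1}\ph_{k}\ast f}_{\Ellq(\R;X)}$, and then the Littlewood--Paley pieces satisfy a Bernstein-type inequality $\norm{\F^{-1}\ph_{k}\ast f}_{\Ellq(\R;X)}\lesssim 2^{k(1/q'-1/q')}\norm{\F^{-1}\ph_{k}\ast f}_{\Ellq(\R;X)}$ — more precisely one just needs that $\F^{-1}\ph_{k}\in\Ellone(\R)$ with $\Ellone$-norm uniformly bounded in $k$ (which follows by scaling since $\F^{-1}\ph_{k}(t)=2^{k}(\F^{-1}\ph)(2^{k}t)$ for $k\geq 1$), so that $\norm{\F^{-1}\ph_{k}\ast f}_{\Ellq(\R;X)}\leq\norm{\F^{-1}\ph_{k}}_{\Ellone(\R)}\norm{\F^{-1}\ph_{k}\ast f}_{\Ellq(\R;X)}$ is not quite the right manipulation; instead one bounds each Littlewood--Paley block of $f$ and sums, obtaining $\norm{f}_{\Ellq(\R;X)}\leq C\norm{f}_{\Be^{0}_{q,1}(\R;X)}$ with $C$ depending only on $\psi$. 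Since the excerpt already records that a different choice of $\psi$ yields an equivalent norm, I can absorb this $\psi$-dependent constant into the statement (or normalize so that it equals $1$); in any case this is the routine part. Composing the two bounds yields the claimed estimate.

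One subtlety worth flagging: the Besov norm in the source space $\Be^{1/p-1/q}_{p,1}(\R;X)$ is needed to make the sum over Littlewood--Paley blocks converge after applying the multiplier, so the $\ell^{1}$ summability index (the last subscript $1$) is essential and cannot be improved to $\ell^{s}$ for $s>1$ in this particular corollary — this is why Proposition \ref{Fourier multipliers under type and cotype} is invoked with $s=1$ rather than a general $s$. Also, strictly speaking one should check that $L_{\mu}$ initially defined on $\Sw(\R;X)$ extends to the asserted bounded operator; this is immediate because $\Sw(\R;X)$ is dense in $\Be^{1/p-1/q}_{p,1}(\R;X)$ (as noted in the excerpt, since $p,1<\infty$) and the target $\Ellq(\R;X)$ is complete.

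The main obstacle is essentially bookkeeping rather than conceptual: one must be careful that the embedding constant $\Be^{0}_{q,1}\hookrightarrow\Ellq$ is genuinely independent of $X$ (it is, since the argument only uses the scalar $\Ellone$-norms of the $\F^{-1}\ph_{k}$ together with Young's inequality $\norm{g\ast f}_{\Ellq(\R;X)}\leq\norm{g}_{\Ellone(\R)}\norm{f}_{\Ellq(\R;X)}$, which is valid for Bochner spaces) and independent of $q$. Given that, the proof is a two-line composition. I do not anticipate needing anything beyond Proposition \ref{Fourier multipliers under type and cotype}, the Kahane contraction principle (already cited in the excerpt), and the elementary $\Be^{0}_{q,1}\hookrightarrow\Ellq$ embedding.
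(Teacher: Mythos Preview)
Your approach is exactly the paper's: apply Proposition \ref{Fourier multipliers under type and cotype} with $r=0$, $s=1$ to $L_{\mu}=T_{\F\mu}$, using the Kahane contraction principle for the $R$-bound, and then pass from $\Be^{0}_{q,1}(\R;X)$ to $\Ellq(\R;X)$ via the embedding.

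The only wobble is your treatment of the embedding constant. You already wrote the correct argument and then talked yourself out of it: since $\sum_{k\geq 0}\ph_{k}\equiv 1$, one has $f=\sum_{k\geq 0}\F^{-1}\ph_{k}\ast f$ in $\Sw'(\R;X)$, and the triangle inequality in $\Ellq(\R;X)$ gives
\[
\norm{f}_{\Ellq(\R;X)}\leq\sum_{k\geq 0}\norm{\F^{-1}\ph_{k}\ast f}_{\Ellq(\R;X)}=\norm{f}_{\Be^{0}_{q,1}(\R;X)}.
\]
That is already the \emph{contractive} embedding $\Be^{0}_{q,1}(\R;X)\hookrightarrow\Ellq(\R;X)$ with constant exactly $1$, independent of $\psi$, $q$, and $X$. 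The detour through Bernstein inequalities and $\Ellone$-norms of $\F^{-1}\ph_{k}$ is unnecessary and is what led you to the spurious $\psi$-dependent constant. With the contractive embedding in hand, the stated bound $\tau_{p,X}c_{q,X}\norm{\F\mu}_{\Ellinfty(\R)}$ follows on the nose.
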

\begin{proof}
By the Kahane contraction principle, $R_{X}(\{\F\mu(s)\mid s\in\R\})=\norm{\F\mu}_{\Ell^{\infty}(\R)}$. Since $\Be^{0}_{q,1}(\R;X)$ is contractively embedded in $\Ell^{q}(\R;X)$, the result follows by applying Proposition \ref{Fourier multipliers under type and cotype} to $L_{\mu}=T_{\F\mu}$.
\end{proof}

\begin{remark}\label{rem:better multiplier with UMD}
If in Corollary \ref{Fourier multiplier measure} one assumes additionally that $X$ is a UMD space, then $L_{\mu}\in \La(\Be^{1/p-1/q}_{p,p}(\R;X),\Ellq(\R;X))$ and
\begin{equation}\label{eq:UMD}
\norm{L_{\mu}}_{\La(\Be^{1/p-1/q}_{p,p}(\R;X),\Ellq(\R;X))}\leq C\tau_{p,X}c_{q,X}\norm{\F\mu}_{\Ellinfty(\R)}
\end{equation}
for each $\mu\in\M(\R)$ and some $C\geq 0$ independent of $\mu$. This follows from Proposition \ref{Fourier multipliers under type and cotype} and the embedding $\Be^{0}_{q,p}(\R;X)\subseteq L^{q}(\R;X)$ from \cite[Proposition 3.1]{Veraar13}.
\end{remark}

We assume that the reader is familiar with the basics of Banach lattices from \cite{Lindenstrauss-Tzafriri79}.

\begin{definition}\label{convexity and concavity}
Let $X$ be a Banach lattice and $p,q\in[1,\infty]$.
\begin{itemize}
\item $X$ is \emph{$p$-convex} if there exists a constant $C\geq 0$ such that for all $n\in\N$ and $x_{1},\ldots, x_{n}\in X$,
\begin{align*}
\Big\|\Big(\sum_{k=1}^{n}\abs{x_{k}}^{p}\Big)^{1/p}\Big\|\leq C\Big(\sum_{k=1}^{n}\|x_{k}\|^{p}\Big)^{1/p},
\end{align*}
with the obvious modification for $p=\infty$.
\item $X$ is \emph{$q$-concave} if there exists a constant $C\geq 0$ such that for all $n\in\N$ and $x_{1},\ldots, x_{n}\in X$,
\begin{align*}
\Big(\sum_{k=1}^{n}\|x_{k}\|^{q}\Big)^{1/q}\leq C\Big\|\Big(\sum_{k=1}^{n}\abs{x_{k}}^{q}\Big)^{1/q}\Big\|,
\end{align*}
with the obvious modification for $q=\infty$.
\end{itemize}
\end{definition}

Every Banach lattice $X$ is $1$-convex and $\infty$-concave. If $X$ is $p$-convex and $q$-concave then it is $r$-convex and $s$-concave for all $r\in[1,p]$ and $s\in[q,\infty]$. By \cite[Proposition 1.f.3]{Lindenstrauss-Tzafriri79}, if $X$ is $q$-concave then it has cotype $\max(q,2)$, and if $X$ is $p$-convex and $q$-concave for some $q<\infty$ then $X$ has type $\min(p,2)$. For $(\Omega,\mu)$ a measure space and $r\in[1,\infty]$, $\Ellr(\Omega,\mu)$ is an $r$-convex and $r$-concave Banach lattice.

A subspace $X_{0}\subseteq Y$ of a Banach space $Y$ is said to be \emph{complemented} if there exists a projection $P\in\La(Y)$ with $P(Y)=X_{0}$.

\begin{proposition}\label{Fourier multiplier Bessel space}
Let $X$ be isomorphic to a complemented subspace of a $p$-convex and $q$-concave Banach lattice, for $p\in[1,2]$ and $q\in[2,\infty)$. Then there exists a constant $C\geq 0$ such that $T_{m}\in\La(\Ellp(\R;X),\Ellq(\R;X))$ with
\begin{align}\label{Fourier multiplier inequality Bessel}
\norm{T_{m}}_{\La(\Ellp(\R;X),\Ellq(\R;X))}\leq CR_{X}(\{\abs{s}^{\frac{1}{p}-\frac{1}{q}}m(s)\mid s\in\R\})
\end{align}
for each $m\in\Ellinfty(\R;\La(X))$ such that $R_{X}(\{\abs{s}^{\frac{1}{p}-\frac{1}{q}}m(s)\mid s\in\R\})<\infty$.
\end{proposition}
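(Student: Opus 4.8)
The plan is to reduce Proposition \ref{Fourier multiplier Bessel space} to a statement about Fourier multipliers on Besov spaces analogous to Proposition \ref{Fourier multipliers under type and cotype}, but now mapping an $\Ellp$-Besov scale to an $\Ellq$-Besov scale, by absorbing the weight $(1+s^{2})^{\frac{1}{2}(\frac{1}{p}-\frac{1}{q})}$ into a Bessel-potential operator. Write $J_{\sigma}$ for the Fourier multiplier with symbol $s\mapsto(1+s^{2})^{\sigma/2}$, which is (up to equivalence of norms) an isomorphism from the Bessel-potential space $\mathrm{H}^{\sigma}_{r}(\R;X)$ onto $\Ellr(\R;X)$ for every $r\in(1,\infty)$ when $X$ is a UMD space — in particular for the complemented subspaces of $p$-convex, $q$-concave lattices under consideration, since these have UMD once $1<p$ and $q<\infty$ (and the degenerate endpoint cases $p=1$ or the like are handled separately, or not at all since they force $X$ Hilbertian). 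The point is that $T_{m} = T_{\widetilde m}\circ J_{-\sigma}$ where $\sigma=\frac1p-\frac1q$ and $\widetilde m(s) = (1+s^{2})^{\sigma/2}m(s)$ has $R$-bounded range by hypothesis. So it suffices to prove $T_{\widetilde m}\in\La(\mathrm H^{\sigma}_{p}(\R;X),\Ellq(\R;X))$ with the appropriate constant, i.e.\ that a Fourier multiplier with $R$-bounded symbol is bounded from the Bessel-potential space $\mathrm H^{\sigma}_{p}$ to $\Ellq$.

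The next step is to pass from Bessel-potential spaces to Besov spaces using the standard embeddings $\mathrm H^{\sigma}_{p}(\R;X)\hookrightarrow\Be^{\sigma}_{p,s}(\R;X)$ and $\Be^{0}_{q,s}(\R;X)\hookrightarrow\Ellq(\R;X)$, valid for a suitable choice of $s$ — here one must be careful about the direction of the Besov embeddings relative to the $\Ell$-spaces. On $\R$ (dimension one) one has $\mathrm H^{\sigma}_{p}\hookrightarrow\Be^{\sigma-\varepsilon}_{p,\infty}$ trivially, but to land exactly at exponent $\sigma$ one wants $\Be^{\sigma}_{p,\min(p,2)}\hookrightarrow \mathrm H^{\sigma}_{p}\hookrightarrow\Be^{\sigma}_{p,\max(p,2)}$, the vector-valued analogues of the Littlewood–Paley characterization, which hold because $X$ has nontrivial type and finite cotype (so $\Ellp(\R;X)$ has nontrivial type and finite cotype, hence the relevant square-function estimates hold). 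Choosing $s=\max(p,2)$ for the source space to get into $\mathrm H^{\sigma}_p$ — no wait, one needs $\mathrm H^{\sigma}_{p}\hookrightarrow\Be^{\sigma}_{p,s}$, which goes with $s=\max(p,2)$; and then on the target side $\Be^{0}_{q,s}\hookrightarrow\Ellq$ needs $s=\min(q,2)=2$. These are compatible only if we can afford $s=2$ on both ends, which we can: $\mathrm H^{\sigma}_{p}\hookrightarrow\Be^{\sigma}_{p,2}$ requires $p\le 2$ — exactly our hypothesis — and $\Be^{0}_{q,2}\hookrightarrow\Ellq$ requires $q\ge 2$ — again our hypothesis. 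So with $s=2$ throughout, the chain
$$\Ellp(\R;X)\xrightarrow{\ J_{-\sigma}\ }\mathrm H^{\sigma}_{p}(\R;X)\hookrightarrow\Be^{\sigma}_{p,2}(\R;X)\xrightarrow{\ T_{\widetilde m}\ }\Be^{0}_{q,2}(\R;X)\hookrightarrow\Ellq(\R;X)$$
composes to give exactly $T_{m}$ with the asserted norm bound, the middle arrow being Proposition \ref{Fourier multipliers under type and cotype} with $r=0$, $s=2$ applied to the symbol $\widetilde m$, whose $R$-bound is the quantity on the right-hand side of \eqref{Fourier multiplier inequality Bessel}.

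**The main obstacle** I anticipate is justifying the two Besov-to-Bessel embeddings in the vector-valued setting with the sharp summation index $s=2$; the scalar versions are classical (Triebel), but the vector-valued statements $\mathrm H^{\sigma}_{p}(\R;X)\hookrightarrow\Be^{\sigma}_{p,2}(\R;X)$ for $p\le 2$ and $\Be^{0}_{q,2}(\R;X)\hookrightarrow\Ellq(\R;X)$ for $q\ge 2$ genuinely require the geometry of $X$ — finite cotype and nontrivial type, respectively — rather than being formal, since they reduce to one-sided Littlewood–Paley / square-function inequalities that fail on general Banach spaces. Presumably the cleanest route, and the one the author of \cite{Rozendaal-Veraar15} would take, is to cite these embeddings from that paper or from \cite{Girardi-Weis03}, together with the boundedness of $J_{\sigma}$ on $\Ellp(\R;X)$ from the operator-valued Mikhlin theorem for UMD spaces; with those in hand the proposition follows by the composition above and tracking constants. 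A secondary technical point is the complementation hypothesis: one first proves everything for the lattice itself, then transfers to the complemented subspace $X$ by conjugating $T_{m}$ (acting coordinatewise) with the projection, which is harmless since Fourier multiplier boundedness and $R$-boundedness both pass to complemented subspaces.
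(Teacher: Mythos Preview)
Your route is genuinely different from the paper's. The paper's proof is almost entirely a black-box citation: it extends $m$ to an $\La(Y)$-valued symbol $mP$ on the ambient lattice $Y$ via the projection $P$, observes that the relevant $R$-bound passes from $X$ to $Y$ up to the factor $\norm{P}_{\La(Y)}$, invokes the $\Ellp(\R;Y)\to\Ellq(\R;Y)$ multiplier bound directly from \cite{Rozendaal-Veraar15} for the lattice $Y$, and then restricts back to $\Ellp(\R;X)$. That is the entire argument; nothing about Besov spaces, Bessel potentials, or Proposition~\ref{Fourier multipliers under type and cotype} enters. Your complementation step at the end matches the paper's, but everything preceding it is an attempt to \emph{prove} the statement the paper simply cites.

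Your reduction $T_m = T_{\widetilde m}\circ J_{-\sigma}$ and the chain through $\Be^{\sigma}_{p,2}\to\Be^{0}_{q,2}$ via Proposition~\ref{Fourier multipliers under type and cotype} is a reasonable strategy, and the complementation transfer is correct. But the crux, as you correctly flag, is the pair of embeddings $\Ellp(\R;X)\hookrightarrow\Be^{0}_{p,2}(\R;X)$ for $p\le 2$ and $\Be^{0}_{q,2}(\R;X)\hookrightarrow\Ellq(\R;X)$ for $q\ge 2$. In the vector-valued setting these are \emph{not} consequences of nontrivial type and finite cotype alone --- the universally valid embeddings only give you summation indices $1$ and $\infty$, and getting down to index $2$ genuinely uses the lattice structure (square-function estimates in the lattice sense), which is precisely the content of the result in \cite{Rozendaal-Veraar15} that the paper cites. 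So your proposed resolution --- cite the embeddings from \cite{Rozendaal-Veraar15} --- is circular: you end up importing essentially the same external input as the paper, just in a different guise. Two smaller slips: the map $J_{-\sigma}:\Ellp(\R;X)\to\mathrm{H}^{\sigma}_{p}(\R;X)$ is an isometric isomorphism \emph{by definition} of the Bessel-potential space, so no UMD assumption is needed there; and $p=1$ does not force $X$ to be Hilbertian (every Banach lattice is $1$-convex), so that endpoint cannot be dismissed on those grounds.
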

\begin{proof}
Let $X_{0}$ be a subspace of a $p$-convex and $q$-concave Banach lattice $Y$, $S:X\to X_{0}$ an isomorphism and $P\in\La(Y)$ a projection with $P(Y)=X_{0}$. Let $m\in\Ell^{\infty}(\R;\La(X))$ be such that $R_{X}(\{\abs{s}^{\frac{1}{p}-\frac{1}{q}}m(s)\mid s\in\R\})<\infty$, and let $m_{0}\in\Ell^{\infty}(\R;\La(Y))$ be given by $m_{0}(s):=Sm(s)S^{-1}P\in\La(Y)$ for $s\in\R$. Then
\begin{align*}
R_{Y}(\{\abs{s}^{\frac{1}{p}-\frac{1}{q}}m_{0}(s)\mid s\in\R\})\leq \|S\|\|S^{-1}\|\|P\|R_{X}(\{\abs{s}^{\frac{1}{p}-\frac{1}{q}}m(s)\mid s\in\R\}).
\end{align*}
It follows from \cite[Theorem 3.21]{Rozendaal-Veraar18} that there exists a constant $C\geq 0$ independent of $m$ such that $T_{m_{0}}\in\La(\Ell^{p}(\R;Y),\Ell^{q}(\R;Y))$ with
\begin{align*}
\norm{T_{m_{0}}}_{\La(\Ell^{p}(\R;Y),\Ell^{q}(\R;Y))}\leq C R_{Y}(\{\abs{s}^{\frac{1}{p}-\frac{1}{q}}m_{0}(s)\mid s\in\R\}).
\end{align*}
This concludes the proof since $S^{-1}T_{m_{0}}S=T_{m}$.
\end{proof}

\section{Transference principles}\label{transference principles}

In this section we establish two new transference principles for $C_{0}$-groups, both based on the transference principles for unbounded groups from \cite{Haase09} and \cite{Haase-Rozendaal16}. 

For $\w\geq 0$ and $\mu\in\M_{\w}(\R)$ let $\mu_{\w}\in\M(\R)$ be given by
\begin{equation}\label{perturbed measure}
\mu_{\w}(\ud s):=\cosh(\w s)\mu(\ud s),
\end{equation}
and note that
\begin{equation}\label{Fourier transform perturbed measure}
\F\mu_{\w}(s)=\frac{\F\mu(s+\ui \w)+\F\mu(s-\ui \w)}{2}
\end{equation}
for all $s\in\R$.

\begin{proposition}\label{transference principle groups interpolation result}
Let $\w>\w_{0}\geq 0$, $p\in[1,2]$ and $q\in[2,\infty)$. Then there exists a constant $C\geq 0$ such that the following holds. Let $X$ be a Banach space with type $p$ and cotype $q$, and let $-\ui A$ generate a $C_{0}$-group $(U(s))_{s\in\R}\subseteq\La(X)$ such that $\norm{U(s)}_{\La(X)}\leq M\cosh(\w_{0}s)$ for all $s\in\R$ and some $M\geq 1$. Then
\begin{align*}
\norm{\int_{\R}U(s)x\,\mu(\ud s)}_{X}\leq C\tau_{p,X}c_{q,X}M^{2}\|\F\mu_{\w}\|_{\Ellinfty\!(\R)}\|x\|_{1/p-1/q,1}
\end{align*}
for all $\mu\in\M_{\w}(\R)$ and all $x\in\DAPQBIG$.
\end{proposition}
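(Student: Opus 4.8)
The plan is to prove Proposition \ref{transference principle groups interpolation result} by a transference argument that reduces the estimate for the group integral to a Fourier multiplier estimate between vector-valued Besov spaces, which is then supplied by Corollary \ref{Fourier multiplier measure}. The starting point is the classical transference identity (in the style of Coifman--Weiss and Berkson--Gillespie--Muhly): for a suitable auxiliary function $N\in\Ell^{1}(\R)$ with $\int_{\R}N(t)\,\ud t=1$ and a cutoff at scale related to $\w-\w_{0}$, one writes, for $x\in X$ and $\phi\in X^{*}$,
\begin{align*}
\langle U(t)x,\phi\rangle = \text{(a convolution expression involving }t\mapsto U(t)x\text{ and the unitary-type flow).}
\end{align*}
More precisely, for fixed $x$ one considers the $X$-valued function $F(t):=\ue^{-\w|t|}U(t)x$ (or a variant weighted so that the perturbed measure $\mu_{\w}$ appears) and uses the group law $U(t+s)=U(t)U(s)$ together with an averaging over a large interval to dominate $\|\int_{\R}U(s)x\,\mu(\ud s)\|_{X}$ by $M^{2}$ times the norm of the convolution operator $L_{\mu_{\w}}$ acting on the $X$-valued function $t\mapsto N(t)U(t)x$, up to controlled constants. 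This is exactly the mechanism used in \cite{Haase09, Haase-Rozendaal2}; the novelty here is only in which function spaces one measures the input and output.

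The key steps, in order, are as follows. First, I would fix the weight and the auxiliary window so that the group bound $\|U(s)\|\leq M\cosh(\w_{0}s)$ turns the relevant $X$-valued orbit into an element of $\Be^{1/p-1/q}_{p,1}(\R;X)$ with norm controlled by $M\|x\|_{1/p-1/q,1}$; here one uses that the orbit $s\mapsto U(s)x$ for $x\in\D_{A}(\tfrac1p-\tfrac1q,1)$ lies in the Besov space of the corresponding smoothness, since real interpolation spaces $\D_{A}(\theta,1)$ are precisely characterized by Besov-type regularity of the orbit (this is the interpolation identity underlying the transference principle of \cite{Haase-Rozendaal2}). Second, I would apply Corollary \ref{Fourier multiplier measure}: the convolution operator $L_{\mu_{\w}}$ maps $\Be^{1/p-1/q}_{p,1}(\R;X)$ boundedly into $\Ell^{q}(\R;X)$ with norm at most $\tau_{p,X}c_{q,X}\|\F\mu_{\w}\|_{\Ell^{\infty}(\R)}$. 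Third, I would transfer back: evaluating the resulting $\Ell^{q}$-valued function at a suitable point (or integrating against $N$) and using the group bound once more to pull out a second factor of $M$, one recovers $\|\int_{\R}U(s)x\,\mu(\ud s)\|_{X}$ on the left, giving the claimed inequality with the constant $C\tau_{p,X}c_{q,X}M^{2}$. The uniformity of $C$ in $X$, $A$ and $\mu$ is automatic because every constant that appears depends only on $p$, $q$, $\w$ and $\w_{0}$ through the fixed choice of $N$ and the Besov norm equivalence.

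The main obstacle I anticipate is the bookkeeping in the first step: identifying the precise $X$-valued function whose Besov norm is comparable to $\|x\|_{1/p-1/q,1}$ and verifying that the transference identity indeed produces $L_{\mu_{\w}}$ (with the \emph{perturbed} measure $\mu_{\w}$ from \eqref{perturbed measure}) rather than $L_{\mu}$ itself. The appearance of $\cosh(\w s)$ in $\mu_{\w}$, and correspondingly of $\cosh(\w_{0}s)$ in the group bound, is what makes the two weights interact correctly so that the orbit of a group with subexponential growth $\w_{0}$ can be transferred against a measure in $\M_{\w}(\R)$ with $\w>\w_{0}$; getting the exponents to match requires care, and this is where the gap $\w>\w_{0}$ is consumed. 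A secondary technical point is ensuring the transferred function genuinely lies in $\Sw(\R;X)$ (or a dense subclass) so that Corollary \ref{Fourier multiplier measure} applies directly, after which one extends by density and closedness; but this is routine once the orbit regularity in the first step is established. Everything else — the two applications of the group growth bound producing $M^{2}$, and the final evaluation/averaging — is a direct computation.
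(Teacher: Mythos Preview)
Your overall strategy --- factorize $U_{\mu}$ through a convolution operator on $X$-valued function spaces and then invoke Corollary~\ref{Fourier multiplier measure} --- is the same as the paper's, and you correctly identify \cite{Haase09,Haase-Rozendaal2} as the source of the transference mechanism. However, the details you sketch are a mix of two different transference principles, and the one you describe more concretely is the wrong one for this setting.

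The language of an auxiliary $N\in\Ell^{1}(\R)$ with $\int N=1$ and ``averaging over a large interval'' belongs to the Coifman--Weiss/Berkson--Gillespie--Muhly transference for \emph{bounded} groups; it does not apply here because the group has exponential growth $\cosh(\w_{0}s)$. The correct mechanism, which the paper carries out explicitly, is to choose fixed weights $\psi(s)=1/\cosh(2\w s)$ and $\ph(s)=\tfrac{\sqrt{8}\w}{\pi}\cosh(\w s)/\cosh(2\w s)$ with the algebraic identity $\ph\ast\psi(s)=1/\cosh(\w s)$, and then set $\iota x(s):=\psi(-s)U(-s)x$ and $Pf:=\int_{\R}\ph(s)U(s)f(s)\,\ud s$. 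The factorization $U_{\mu}=P\circ L_{\mu_{\w}}\circ\iota$ then holds exactly, and it is this identity that explains why the \emph{perturbed} measure $\mu_{\w}$ (rather than $\mu$) appears --- not a matter of ``getting exponents to match'' but a consequence of the convolution identity for $\ph$ and $\psi$. Your suggestion to recover the $X$-norm by ``evaluating the resulting $\Ellq$-valued function at a suitable point'' cannot work (point evaluation is not bounded on $\Ellq$); the back-transference is the operator $P$, whose $\La(\Ellq(\R;X),X)$-boundedness is a one-line H\"older estimate producing the second factor of $M$.

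Finally, the Besov regularity of the orbit is not obtained via a direct characterization of $\D_{A}(\theta,1)$ by orbit smoothness, as you suggest. The paper shows $\iota:X\to\Ellp(\R;X)$ and $\iota:\D(A)\to\Wonep(\R;X)$ are bounded (with explicit constants), and then interpolates to get $\iota:\DAPQ\to\Be^{1/p-1/q}_{p,1}(\R;X)$ via $\Be^{1/p-1/q}_{p,1}=(\Ellp,\Wonep)_{1/p-1/q,1}$. This is where the uniformity of $C$ in $X$ is secured.
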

\begin{proof}
Since $\D_{A}(0,1)=\left\{0\right\}$ we may assume that $\frac{1}{p}-\frac{1}{q}\in(0,1)$ (for $\tfrac{1}{p}-\tfrac{1}{q}=0$ a stronger result is obtained in Proposition \ref{transference principle groups fractional domain result} below). Let
\begin{align}\label{define psi and phi}
\psi(s):=\frac{1}{\cosh(2\w s)}\qquad\textrm{and}\qquad\ph(s):=\frac{\sqrt{8}\w}{\pi}\frac{\cosh(\w s)}{\cosh(2\w s)}
\end{align}
for $s\in\R$. Define $\iota:X\to\Ellp(\R;X)$ by
\begin{align}\label{define iota}
\iota x(s):=\psi(-s)U(-s)x\qquad(x\in X,s\in\R)
\end{align}
and $P:\Ellq(\R;X)\to X$ by
\begin{align}\label{define P}
Pf:=\int_{\R}\ph(s)U(s)f(s)\,\ud s\qquad(f\in\Ellq(\R;X)).
\end{align}
Then $\iota$ is bounded and
\begin{align}\label{norm iota on Lp}
\norm{\iota}_{\La(X,\Ellp(\R;X))}&\leq M\norm{\psi(\cdot)\cosh(\w_{0}\cdot)}_{p}.
\end{align}
By H\"{o}lder's inequality, $P$ is bounded and
\begin{align}\label{norm P}
\norm{P}_{\La(\Ellq(\R;X),X)}&\leq M\norm{\ph(\cdot)\cosh(\w_{0}\cdot)}_{q'}.
\end{align}
Let $x\in\D(A)$. Then $\iota x\in\Ce^{1}(\R;X)$ and
\begin{align*}
(\iota x)'(s)&=-\psi'(-s)U(-s)x+\ui\psi(-s)U(-s)Ax\\
&=-2\w\frac{\tanh(2\w s)}{\cosh(2\w s)}U(-s)x+\ui \frac{1}{\cosh(2\w s)}U(-s)Ax
\end{align*}
for all $s\in\R$. Hence $(\iota x)'\in\Ellp(\R;X)$ and
\begin{align*}
\norm{(\iota x)'}_{p}\leq 2\w M\norm{\tanh}_{\Ellinfty(\R)}\norm{\frac{\cosh(\w_{0}\cdot)}{\cosh(2\w\cdot)}}_{p}\!\norm{x}_{X}+M\norm{\frac{\cosh(\w_{0}\cdot)}{\cosh(2\w \cdot)}}_{p}\!\norm{Ax}_{X}.
\end{align*}
Now \eqref{norm iota on Lp} implies that $\iota x\in\Wonep(\R;X)$, with
\begin{align*}
\norm{\iota x}_{1,p}\leq M(2\w\norm{\tanh}_{\Ellinfty(\R)}+1)\norm{\frac{\cosh(\w_{0}\cdot)}{\cosh(2\w\cdot)}}_{p}\!\norm{x}_{\D(A)}.
\end{align*}
Hence $\iota:\D(A)\rightarrow \Wonep(\R;X)$ is bounded and
\begin{align}\label{norm iota on Wnp}
\norm{\iota}_{\La(\D(A),\Wonepeq(\R;X))}\leq M(2\w\norm{\tanh}_{\Ellinfty(\R)}+1)\norm{\frac{\cosh(\w_{0}\cdot)}{\cosh(2\w\cdot)}}_{p}.
\end{align}
By equation (5.9) in \cite{Amann97},
\begin{align*}
\Be^{1/p-1/q}_{p,1}(\R;X)=\left(\Ellp(\R;X),\Wonep(\R;X)\right)_{\frac{1}{p}-\frac{1}{q},1}
\end{align*}
with equivalent norms. Moreover, it follows from a direct sum argument that the constant in the norm equivalence does not depend on $X$. Hence \eqref{interpolation inequality}, \eqref{norm iota on Lp} and \eqref{norm iota on Wnp} imply that $\iota:\DAPQ\to\Be^{1/p-1/q}_{p,1}(\R;X)$ is bounded with
\begin{align}\label{norm iota}
\norm{\iota}_{\La(\DAPQ,\Be^{1/p-1/q}_{p,1}(\R;X))}\leq C_{1}M,
\end{align}
for some constant $C_{1}\geq 0$ independent of $A$ and $X$.

It is shown in \cite[Theorem 3.2]{Haase09} that $\ph\ast\psi(s)=\frac{1}{\cosh(\w s)}$ for all $s\in\R$. Let $U_{\mu}\in\La(X)$ be as in \eqref{Hille-Phillips calculus}. Then the abstract transference principle from \cite[Section 2]{Haase11} yields the commutative diagram 
\begin{align*}
\begin{CD}
\Be^{1/p-1/q}_{p,1}(\R;X)	@>L_{\mu_{\w}}>>	\Ellq(\R;X)\\
@A\iota AA										@VVPV\\
\DAPQBIG			 		@>U_{\mu}>>			X
\end{CD}
\end{align*}
of bounded maps. Finally, estimate the norms of $P$ and $\iota$ using \eqref{norm P} and \eqref{norm iota} and apply Corollary \ref{Fourier multiplier measure} to $L_{\mu_{\w}}$.
\end{proof}

\begin{remark}\label{rem:better transference with UMD}
If $X$ is a UMD space then one can replace the space $\D_{A}(\tfrac{1}{p}-\tfrac{1}{q},1)$ by the larger $\D_{A}(\tfrac{1}{p}-\tfrac{1}{q},p)$. This follows in the exact same way as Proposition \ref{transference principle groups interpolation result}, except that one uses \eqref{eq:UMD} instead of Corollary \ref{Fourier multiplier measure} at the very end of the proof.
\end{remark}

For Banach lattices we establish another transference principle. Recall that each Banach space $X$ with type $p=2$ and cotype $q=2$ is isomorphic to an $L^{2}$-space, by \cite{Kwapien72}. Hence the following proposition deals with the case $p=q=2$ in Proposition \ref{transference principle groups interpolation result}.

\begin{proposition}\label{transference principle groups fractional domain result}
Let $\w>\w_{0}\geq 0$, $p\in[1,2]$ and $q\in[2,\infty)$. Let $X$ be isomorphic to a complemented subspace of a $p$-convex and $q$-concave Banach lattice. Then there exists a constant $C\geq 0$ such that the following holds. Let $-\ui A$ generate a $C_{0}$-group $(U(s))_{s\in\R}\subseteq\La(X)$ such that $\norm{U(s)}_{\La(X)}\leq M\cosh(\w_{0}s)$ for all $s\in\R$ and some $M\geq 1$. Then
\begin{align*}
\norm{\int_{\R}U(s)x\,\mu(\ud s)}_{X}\leq CM^{2}\sup_{s\in\R}\,\Big(\abs{s}^{\frac{1}{p}-\frac{1}{q}}\abs{\F\mu_{\w}(s)}\Big)\|x\|_{X}
\end{align*}
for all $\mu\in\M_{\w}(\R)$ and all $x\in X$.
\end{proposition}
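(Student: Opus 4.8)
The plan is to mimic the proof of Proposition~\ref{transference principle groups interpolation result} exactly, replacing the interpolation-space Fourier multiplier result (Corollary~\ref{Fourier multiplier measure}) by the Bessel-potential Fourier multiplier result of Proposition~\ref{Fourier multiplier Bessel space}, and adjusting the transference factorization so that both the ``embedding'' map $\iota$ and the ``projection'' map $P$ land in genuine Bochner spaces $\Ellp(\R;X)$ and $\Ellq(\R;X)$ rather than in a Besov space. Concretely, with $\w_{0}<\w$ fixed, define $\psi$, $\ph$, $\iota:X\to\Ellp(\R;X)$ and $P:\Ellq(\R;X)\to X$ by the same formulas \eqref{define psi and phi}, \eqref{define iota}, \eqref{define P}. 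The growth bound $\norm{U(s)}\leq M\cosh(\w_{0}s)$ together with the exponential decay of $\psi$ and $\ph$ (both of order $\ue^{-2\w\abs{s}}$, which beats $\cosh(\w_0 s)$ since $\w>\w_0$) gives boundedness of $\iota$ and $P$ with norms controlled by $M$ times a constant depending only on $\w,\w_{0},p,q$, exactly as in \eqref{norm iota on Lp} and \eqref{norm P}. Since $X$ is a complemented subspace of a $p$-convex and $q$-concave Banach lattice, this constant may depend on that lattice structure but not on $A$.

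Next I would invoke the abstract transference principle from \cite[Section~2]{Haase11}: since $\ph\ast\psi(s)=\tfrac{1}{\cosh(\w s)}$ by \cite[Theorem~3.2]{Haase09}, one obtains the commutative diagram
\begin{align*}
\begin{CD}
\Ellp(\R;X)	@>L_{\mu_{\w}}>>	\Ellq(\R;X)\\
@A\iota AA										@VVPV\\
X			 		@>U_{\mu}>>			X
\end{CD}
\end{align*}
of bounded maps, where $\mu_{\w}$ is as in \eqref{perturbed measure}, so that $U_{\mu}=P\circ L_{\mu_{\w}}\circ\iota$. It then remains to bound $\norm{L_{\mu_{\w}}}_{\La(\Ellp(\R;X),\Ellq(\R;X))}$. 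Here $L_{\mu_{\w}}=T_{\F\mu_{\w}}$ is the Fourier multiplier with scalar symbol $\F\mu_{\w}$, and by the Kahane contraction principle $R_{X}(\{(1+s^{2})^{\frac{1}{2}(\frac{1}{p}-\frac{1}{q})}\F\mu_{\w}(s)\mid s\in\R\})=\sup_{s\in\R}(1+s^{2})^{\frac{1}{2}(\frac{1}{p}-\frac{1}{q})}\abs{\F\mu_{\w}(s)}$. Applying Proposition~\ref{Fourier multiplier Bessel space} yields
\begin{align*}
\norm{L_{\mu_{\w}}}_{\La(\Ellp(\R;X),\Ellq(\R;X))}\leq C\sup_{s\in\R}\Big((1+s^{2})^{\frac{1}{2}(\frac{1}{p}-\frac{1}{q})}\abs{\F\mu_{\w}(s)}\Big)
\end{align*}
with $C$ depending only on $X$ (through the lattice structure) and $p,q$. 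Composing the three norm bounds, and noting that $\iota$ and $P$ each contribute a factor $M$, gives the claimed estimate with the factor $M^{2}$.

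There is essentially no serious obstacle here, since the structure is parallel to Proposition~\ref{transference principle groups interpolation result}; the only points requiring care are bookkeeping points. First, one must check that the abstract transference principle of \cite{Haase11} applies with the target/source spaces being $\Ellp(\R;X)$ and $\Ellq(\R;X)$ rather than the Besov pair used before — this is fine because the principle only needs $\iota$, $P$ bounded and the convolution identity $\ph\ast\psi=1/\cosh(\w\,\cdot)$, both of which hold verbatim. Second, one should verify that $\mu_{\w}\in\M(\R)$ (immediate from $\mu\in\M_{\w}(\R)$ and $\cosh(\w s)\leq \ue^{\w\abs{s}}$) so that Proposition~\ref{Fourier multiplier Bessel space} is applicable to the scalar symbol $\F\mu_{\w}$. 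The mild subtlety — and the closest thing to a real step — is that Proposition~\ref{Fourier multiplier Bessel space} is stated for $m\in\Ellinfty(\R;\La(X))$, so one identifies the scalar function $\F\mu_{\w}$ with the operator-valued symbol $s\mapsto\F\mu_{\w}(s)\,\id_{X}$, whose relevant $R$-bound reduces to the supremum above by Kahane's contraction principle exactly as in the proof of Corollary~\ref{Fourier multiplier measure}. Everything else is the routine estimation of weighted $\Ellp$- and $\Ellq$-norms of $\psi$ and $\ph$ against $\cosh(\w_{0}\,\cdot)$, which converges precisely because $\w>\w_{0}$.
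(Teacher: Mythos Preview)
Your proposal is correct and follows essentially the same approach as the paper's own proof: define $\iota$, $P$, $\psi$, $\ph$ as in \eqref{define psi and phi}--\eqref{define P}, factorize $U_{\mu}=P\circ L_{\mu_{\w}}\circ\iota$ via the abstract transference principle, and bound $L_{\mu_{\w}}$ using Proposition~\ref{Fourier multiplier Bessel space} in place of Corollary~\ref{Fourier multiplier measure}. The additional verifications you spell out (applicability of the transference principle to the $\Ellp$/$\Ellq$ pair, $\mu_{\w}\in\M(\R)$, identification of the scalar symbol with an operator-valued one via the Kahane contraction principle) are all routine and handled implicitly in the paper.
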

\begin{proof}
For $\mu\in\M_{\w}(\R)$ let $U_{\mu}\in\La(X)$ be as in \eqref{Hille-Phillips calculus}. Let $\iota$ and $P$ be as in \eqref{define iota} and \eqref{define P}, with $\psi$ and $\ph$ as in \eqref{define psi and phi}. As in the proof of Proposition \ref{transference principle groups interpolation result} one can factorize $U_{\mu}$ as $U_{\mu}=P\circ L_{\mu_{\w}}\circ \iota$. Hence \eqref{norm iota on Lp}, \eqref{norm P} and Proposition \ref{Fourier multiplier Bessel space} conclude the proof. 
\end{proof}

\section{Results for $C_{0}$-groups}\label{results for groups}

In this section we obtain functional calculus results for generators of $C_{0}$-groups.

\subsection{The main result for $C_{0}$-groups}\label{main result for groups}

We now prove our main functional calculus result for $C_{0}$-groups, already stated in the Introduction as Theorem \ref{main result introduction}. 

\begin{theorem}\label{main theorem}
Let $-\ui A$ generate a $C_{0}$-group $(U(s))_{s\in\R}\subseteq\La(X)$ on a Banach space $X$ with type $p\in[1,2]$ and cotype $q\in[2,\infty)$. Let $\w>\theta(U)$. Then there exists a constant $C\geq 0$ such that $\DAPQBIG\subseteq\D(f(A))$ and
\begin{align}\label{calculus inequality}
\norm{f(A)}_{\La(\DAPQ,X)}\leq C\norm{f}_{\HT^{\infty}\!(\St_{\w})}
\end{align}
for all $f\in\HT^{\infty}\!(\St_{\w})$.
\end{theorem}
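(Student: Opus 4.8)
The plan is to deduce Theorem \ref{main theorem} from the transference principle in Proposition \ref{transference principle groups interpolation result} together with the Convergence Lemma (Lemma \ref{convergence lemma}), using a density/approximation argument to pass from elementary functions to general $f\in\HT^{\infty}\!(\St_{\w})$. First I would reduce to the case of a group with $\cosh$-type growth: since $\w>\theta(U)$, pick $\w_{0}$ with $\theta(U)<\w_{0}<\w$ and $M\geq 1$ so that $\norm{U(s)}\leq M\ue^{\w_{0}\abs{s}}\leq 2M\cosh(\w_{0}s)$ for all $s\in\R$, so that the hypotheses of Proposition \ref{transference principle groups interpolation result} are met for this $\w_{0}$ and our fixed $\w$.

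Next, for $f\in\E(\St_{\w})$, Lemma \ref{decay implies fourier transform} provides $\mu\in\M_{\w_{0}'}(\R)\subseteq\M_{\w}(\R)$ for suitable $\w_{0}'\in(\w_{0},\w)$ with $f=\F\mu$ and $f(A)=U_{\mu}$. Then Proposition \ref{transference principle groups interpolation result} gives
\begin{align*}
\norm{f(A)x}_{X}=\norm{U_{\mu}x}_{X}\leq C\tau_{p,X}c_{q,X}M^{2}\norm{\F\mu_{\w}}_{\Ellinfty(\R)}\norm{x}_{1/p-1/q,1}
\end{align*}
for all $x\in\DAPQ$. By \eqref{Fourier transform perturbed measure}, $\F\mu_{\w}(s)=\tfrac{1}{2}(f(s+\ui\w)+f(s-\ui\w))$, so $\norm{\F\mu_{\w}}_{\Ellinfty(\R)}\leq\norm{f}_{\HT^{\infty}\!(\St_{\w})}$ because $s\pm\ui\w\in\St_{\w'}$ for $\w'$ slightly less than $\w$ — more carefully, $s\pm\ui\w\in\overline{\St_{\w}}$, and by passing to a marginally larger strip (replacing $\w$ by $\w-\varepsilon$ at the start, or invoking that $f$ extends continuously) one obtains the clean bound $\norm{f(A)}_{\La(\DAPQ,X)}\leq C'\norm{f}_{\HT^{\infty}\!(\St_{\w})}$ for all $f\in\E(\St_{\w})$, with $C'=C\tau_{p,X}c_{q,X}M^{2}$ independent of $f$.

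Finally I would remove the decay assumption by approximation. Given $f\in\HT^{\infty}\!(\St_{\w})$, set $f_{n}(z):=f(z)\,n^{2}(\ui\lambda-z)^{-2}\big((n+1)/(\ui\lambda-z+n)\big)^{?}$ — more simply, $f_{n}(z):=f(z)\rho_{n}(z)$ where $\rho_{n}(z):=n^{2}/(n-\ui z)^{2}$ (with $\lambda$ chosen so the shift lands in $\St_{\w}$) or any standard family in $\E(\St_{\w})$ with $\sup_{n}\norm{\rho_{n}}_{\HT^{\infty}\!(\St_{\w})}<\infty$ and $\rho_{n}\to 1$ pointwise on $\St_{\w}$. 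Then $f_{n}\in\E(\St_{\w})$, $\sup_{n}\norm{f_{n}}_{\HT^{\infty}\!(\St_{\w})}\leq C''\norm{f}_{\HT^{\infty}\!(\St_{\w})}$, $f_{n}\to f$ pointwise, and by the elementary-function bound $\sup_{n}\norm{f_{n}(A)}_{\La(\DAPQ,X)}<\infty$. Now apply Lemma \ref{convergence lemma} with $Y=\DAPQ$ (which contains $\D(A^{2})$ densely, as $\D(A^{2})$ is a core for the real interpolation space and $A$ is densely defined): it yields $\DAPQ\subseteq\D(f(A))$, $f(A)\in\La(\DAPQ,X)$, and $\norm{f(A)}_{\La(\DAPQ,X)}\leq\limsup_{n}\norm{f_{n}(A)}_{\La(\DAPQ,X)}\leq C'C''\norm{f}_{\HT^{\infty}\!(\St_{\w})}$, which is \eqref{calculus inequality}.

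The main obstacles are bookkeeping rather than conceptual: (i) checking that $\D(A^{2})$ is dense in $\DAPQ$ with $\DAPQ\hookrightarrow X$ continuously, so that the Convergence Lemma applies with this choice of $Y$ — this uses standard real interpolation facts and density of $\D(A^{2})$ in $X$; and (ii) the strip margin issue, i.e.\ making sure that evaluating $f$ at $s\pm\ui\w$ is legitimate and that the regularizing family $\rho_{n}$ genuinely lies in $\E(\St_{\w})$ with uniformly bounded $\HT^{\infty}$-norm and the correct decay $O(\abs{z}^{-\alpha})$, $\alpha>1$, as $\abs{\Real z}\to\infty$. Both are handled by shrinking $\w$ by an arbitrarily small amount at the outset (or equivalently running the argument on $\St_{\w'}$ for every $\w'\in(\theta(U),\w)$ and letting $\w'\uparrow\w$), since the constant $C$ from Proposition \ref{transference principle groups interpolation result} depends only on $p,q,\w',\w_{0}$ and can be taken uniform.
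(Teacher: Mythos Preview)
Your approach is correct and matches the paper's: transference (Proposition \ref{transference principle groups interpolation result}) for $f\in\E(\St_{\w})$, then the Convergence Lemma with regularizers $\tau_{k}(z)=-k^{2}(\ui k-z)^{-2}$. One cosmetic point: the inclusion $\M_{\w_{0}'}(\R)\subseteq\M_{\w}(\R)$ is backwards (larger $\w$ demands more decay of $\mu$), but the paper handles this and your boundary worry simultaneously by applying Proposition \ref{transference principle groups interpolation result} with parameter $\alpha\in(\theta(U),\w)$ in place of $\w$, so that $\mu\in\M_{\alpha}(\R)$ directly and $s\pm\ui\alpha\in\St_{\w}$ strictly --- exactly your ``shrink $\w$'' fix.
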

\begin{proof}
First consider $f\in\E(\St_{\w})$ and let $\alpha\in(\theta(U),\w)$. By Lemma \ref{decay implies fourier transform} there exists a $\mu\in\M_{\alpha}(\R)$ with $f=\F\mu$ and $f(A)=U_{\mu}$. By Proposition \ref{transference principle groups interpolation result},
\begin{align*}
\norm{f(A)x}_{X}\leq C\norm{\F\mu_{\alpha}}_{\Ellinfty(\R)}\norm{x}_{\frac{1}{p}-\frac{1}{q},1}
\end{align*}
for all $x\in \D_{A}(1/p-1/q,1)$ and some constant $C\geq 0$ independent of $f$ and $x$. This implies \eqref{calculus inequality} since $\norm{\F\mu_{\alpha}}_{\Ellinfty(\R)}\leq \norm{f}_{\HT^{\infty}\!(\St_{\w})}$, as follows from \eqref{Fourier transform perturbed measure}.

For general $f\in\HT^{\infty}\!(\St_{\w})$, define $\tau_{k}(z):=-k^{2}(\ui k-z)^{-2}$ for $k\in\N$ with $k>\w$ and $z\in \St_{\w}$. Then $f\tau_{k}\in \E(\St_{\w})$ for all $k$,
\begin{align*}
\sup_{k} \norm{f\tau_{k}}_{\HT^{\infty}\!(\St_{\w})}\leq \norm{f}_{\HT^{\infty}\!(\St_{\w})}\sup_{k} \norm{\tau_{k}}_{\HT^{\infty}\!(\St_{\w})}<\infty,
\end{align*}
and $(f\tau_{k})(z)\rightarrow f(z)$ as $k\rightarrow \infty$, for all $z\in\St_{\w}$. By what we have already shown,
\begin{align*}
\norm{f\tau_{k}(A)x}_{X}\leq C\norm{f\tau_{k}}_{\HT^{\infty}\!(\St_{\w})}\norm{x}_{\frac{1}{p}-\frac{1}{q},1}\leq C'\norm{f}_{\HT^{\infty}\!(\St_{\w})}\norm{x}_{\frac{1}{p}-\frac{1}{q},1}
\end{align*}
for $C':=C\sup_{k}\norm{\tau_{k}}_{\HT^{\infty}\!(\St_{\w})}$. Since $\D(A^{2})\subseteq\DAPQBIG$ is dense and since $\limsup_{k}\norm{\tau_{k}}_{\HT^{\infty}\!(\St_{\w})}=1$, Lemma \ref{convergence lemma} yields $f(A)\in \La(\DAPQBIG,X)$ with
\begin{align*}
\norm{f(A)}_{\La(\DAPQ,X)}\leq C\norm{f}_{\HT^{\infty}\!(\St_{\w})},
\end{align*}
which concludes the proof.
\end{proof}

\begin{remark}\label{rem:better calculus with UMD}
If in Theorem \ref{main result for groups} one assumes in addition that $X$ is a UMD space, then $\D_{A}(\tfrac{1}{p}-\tfrac{1}{q},1)$ may be replaced by $\D_{A}(\tfrac{1}{p}-\tfrac{1}{q},p)$. This follows by using Remark \ref{rem:better transference with UMD} instead of Proposition \ref{transference principle groups interpolation result} in the proof.
\end{remark}

Under additional assumptions one can obtain stronger results. Note that \eqref{second calculus inequality} improves \eqref{calculus inequality}, since $\D_{A}(\theta,1)\subseteq\D((\lambda+\ui A)^{\theta})$ for each group generator $A$, $\lambda\in\R$ sufficiently large and $\theta\in[0,1]$, by \cite[Proposition 4.1.7]{Lunardi09}.

\begin{theorem}\label{second main theorem}
Let $-\ui A$ generate a $C_{0}$-group $(U(s))_{s\in\R}\subseteq\La(X)$ on a Banach space $X$. Suppose that $X$ is isomorphic to a complemented subspace of a $p$-convex and $q$-concave Banach lattice, for $p\in[1,2]$ and $q\in[2,\infty)$. Let $\lambda>\w>\theta(U)$. Then there exists a constant $C\geq 0$ such that $\D((\lambda+\ui A)^{\frac{1}{p}-\frac{1}{q}})\subseteq\D(f(A))$ and
\begin{align}\label{second calculus inequality}
\norm{f(A)}_{\La(\D((\lambda+\ui A)^{\frac{1}{p}-\frac{1}{q}}),X)}\leq C\norm{f}_{\HT^{\infty}\!(\St_{\w})}
\end{align}
for all $f\in\HT^{\infty}\!(\St_{\w})$.
\end{theorem}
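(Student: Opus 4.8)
The plan is to mirror the proof of Theorem \ref{main theorem}, replacing the use of Proposition \ref{transference principle groups interpolation result} by Proposition \ref{transference principle groups fractional domain result}. First I would reduce to the case where $-\ui A$ generates a group satisfying $\norm{U(s)}_{\La(X)}\leq M\cosh(\w_{0}s)$ for some $\w_{0}\in(\theta(U),\w)$ and $M\geq 1$; this is just a matter of choosing $\w_{0}$ between $\theta(U)$ and $\w$ and absorbing constants. Next, for $f\in\E(\St_{\w})$ I would invoke Lemma \ref{decay implies fourier transform} to write $f=\F\mu_{\w}$ with $\mu\in\M_{\w}(\R)$ (more precisely $\mu\in\M_{\alpha}(\R)$ for suitable $\alpha$, with $\F\mu_{\alpha}$ the relevant transform), so that $f(A)=U_{\mu}$, and then apply Proposition \ref{transference principle groups fractional domain result} to obtain
\begin{align*}
\norm{f(A)x}_{X}\leq CM^{2}\sup_{s\in\R}\Big((1+s^{2})^{\frac{1}{2}(\frac{1}{p}-\frac{1}{q})}\abs{\F\mu_{\w}(s)}\Big)\norm{x}_{X}
\end{align*}
for all $x\in X$.

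The key point, and the step I expect to require the most care, is to pass from the bound involving $\sup_{s}(1+s^{2})^{\frac{1}{2}(\frac{1}{p}-\frac{1}{q})}\abs{\F\mu_{\w}(s)}$ on all of $X$ to a bound involving $\norm{f}_{\HT^{\infty}\!(\St_{\w})}$ on the smaller space $\D((\lambda+\ui A)^{\frac{1}{p}-\frac{1}{q}})$. The idea is: by \eqref{Fourier transform perturbed measure}, $\F\mu_{\w}(s)=\tfrac12(f(s+\ui\w)+f(s-\ui\w))$, so $\abs{\F\mu_{\w}(s)}\leq\norm{f}_{\HT^{\infty}\!(\St_{\w})}$; the weight $(1+s^{2})^{\frac{1}{2}(\frac{1}{p}-\frac{1}{q})}$ should be absorbed into the fractional resolvent power by replacing $f$ with $g(z):=(\lambda+\ui z)^{-(\frac{1}{p}-\frac{1}{q})}f(z)$. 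For $\lambda>\w$ the function $z\mapsto(\lambda+\ui z)^{-\theta}$ is holomorphic and bounded on $\St_{\w}$ with $(1+s^{2})^{\theta/2}\abs{(\lambda+\ui s\pm\w)^{-\theta}}$ bounded uniformly in $s$, so applying the displayed estimate to $g$ in place of $f$ gives $\norm{g(A)x}_{X}\leq CM^{2}\norm{f}_{\HT^{\infty}\!(\St_{\w})}\norm{x}_{X}$, and the composition rule for the functional calculus yields $f(A)x=g(A)(\lambda+\ui A)^{\frac{1}{p}-\frac{1}{q}}x$ for $x\in\D((\lambda+\ui A)^{\frac{1}{p}-\frac{1}{q}})$, which is exactly \eqref{second calculus inequality} for $f\in\E(\St_{\w})$.

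Finally, for general $f\in\HT^{\infty}\!(\St_{\w})$ I would run the same approximation argument as in the proof of Theorem \ref{main theorem}: set $\tau_{k}(z):=-k^{2}(\ui k-z)^{-2}$ for $k\in\N$ with $k>\w$, note $f\tau_{k}\in\E(\St_{\w})$, that $\sup_{k}\norm{f\tau_{k}}_{\HT^{\infty}\!(\St_{\w})}<\infty$, $\limsup_{k}\norm{\tau_{k}}_{\HT^{\infty}\!(\St_{\w})}=1$, and $f\tau_{k}(z)\to f(z)$ pointwise on $\St_{\w}$, then apply the Convergence Lemma (Lemma \ref{convergence lemma}) with $Y=\D((\lambda+\ui A)^{\frac{1}{p}-\frac{1}{q}})$, using that $\D(A^{2})\subseteq Y$ is dense and that $Y$ is continuously embedded in $X$. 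One technical caveat is to double-check the composition identity $f(A)x=g(A)(\lambda+\ui A)^{\frac{1}{p}-\frac{1}{q}}x$ and the embedding $\D(A^{2})\subseteq\D((\lambda+\ui A)^{\frac{1}{p}-\frac{1}{q}})$ with dense range; both are standard facts about the (sectorial/strip-type) fractional-power calculus, the latter because $\frac{1}{p}-\frac{1}{q}\leq 1$ and $\lambda+\ui A$ is invertible with $(\lambda+\ui A)^{-1}\in\La(X)$, but they should be cited carefully.
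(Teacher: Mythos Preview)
Your proposal is correct and follows essentially the same route as the paper: both proofs multiply $f$ by $(\lambda+\ui z)^{-(\frac{1}{p}-\frac{1}{q})}$, apply the transference principle of Proposition \ref{transference principle groups fractional domain result} to the resulting function (the paper writes this as the convolution $\mu\ast\nu$ with the explicit measure $\nu(\ud s)=\tfrac{1}{\Gamma(\theta)}\ind_{[0,\infty)}(s)s^{\theta-1}\ue^{-\lambda s}\,\ud s$ whose Fourier transform is $(\lambda+\ui z)^{-\theta}$), use the composition rule $f(A)x=g(A)(\lambda+\ui A)^{\theta}x$ (cited in the paper as \cite[Corollary 3.3.6]{Haase06a}), and then pass to general $f\in\HT^{\infty}\!(\St_{\w})$ via the same approximants $\tau_{k}$ and the Convergence Lemma. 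The only cosmetic point to tidy is that, as you already note parenthetically, one should work with $\alpha\in(\theta(U),\w)$ rather than $\w$ itself so that the boundary evaluations $s\pm\ui\alpha$ lie strictly inside $\St_{\w}$; the paper does exactly this.
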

\begin{proof}
Write $\theta:=\frac{1}{p}-\frac{1}{q}$. For $f\in\E(\St_{\w})$ and $\alpha\in(\theta(U),\w)$ there exists a $\mu\in\M_{\alpha}(\R)$ with $f=\F\mu_{\alpha}$ and $f(A)=U_{\mu}$, by Lemma \ref{decay implies fourier transform}. Let $x\in\D((\lambda+\ui A)^{\theta})$. By \cite[Corollary 3.3.6]{Haase06a},
\begin{align*}
f(A)x=f(A)(\lambda+\ui A)^{-\theta}(\lambda+\ui A)^{\theta}x=U_{\mu}U_{\nu}(\lambda+\ui A)^{\theta}x=U_{\mu\ast\nu}(\lambda+\ui A)^{\theta}x,
\end{align*}
where $\nu\in\M_{\w}(\R)$ is given by $\nu(\ud s):=\frac{1}{\Gamma(\theta)}\mathbf{1}_{[0,\infty)}(s)s^{\theta-1}\ue^{-\lambda s}\ud s$. Since $\F(\mu\ast\nu)(s)=f(s)(\lambda+\ui s)^{-\theta}$ for $s\in\R$, \eqref{Fourier transform perturbed measure} yields
\begin{align*}
\sup_{s\in\R}\,\abs{s}^{\theta}\abs{\F(\mu\ast\nu)_{\alpha}(s)}\leq C_{1}\norm{f}_{\HT^{\infty}\!(\St_{\w})}
\end{align*}
for some constant $C_{1}\geq 0$. Now Proposition \ref{transference principle groups fractional domain result} yields a constant $C_{2}\geq 0$ such that
\begin{align*}
\norm{f(A)x}_{X}&\leq C_{2}\sup_{s\in\R}\,\abs{s}^{\theta}\abs{\F(\mu\ast\nu)_{\alpha}(s)}\|(\lambda+\ui A)^{\theta}x\|_{X}\\
&\leq C_{1}C_{2}\norm{f}_{\HT^{\infty}\!(\St_{\w})}\|(\lambda+\ui A)^{\theta}x\|_{X},
\end{align*}
which proves \eqref{second calculus inequality} for $f\in\E(\St_{\w})$. Proceed as in the proof of Theorem \ref{main theorem} to obtain \eqref{second calculus inequality} for general $f\in\HT^{\infty}\!(\St_{\w})$.
\end{proof}

\begin{remark}\label{constant}
It follows from the proofs of Theorems \ref{main theorem} and \ref{second main theorem} that the constants in \eqref{calculus inequality} and \eqref{second calculus inequality} depend on $A$ only through the norm $\norm{U(s)}_{\La(X)}$ of $U(s)$ for all $s\in\R$. In \eqref{calculus inequality} the constant $C$ depends on the underlying space $X$ only through the Gaussian type $p$ constant and the Gaussian cotype $q$ constant of $X$. In particular, the dependence of $C$ on these constants is as in Proposition \ref{Fourier multipliers under type and cotype}. In \eqref{second calculus inequality} the constant depends on the underlying space only through the constant in \eqref{Fourier multiplier inequality Bessel}. Note from the proof of Proposition \ref{Fourier multiplier Bessel space} that, if \eqref{Fourier multiplier inequality Bessel} holds with constant $C\geq0$ on $Y$ and if $X$ is complemented in $Y$ by a projection $P$, then \eqref{Fourier multiplier inequality Bessel} holds on $X$ with constant $C\norm{P}_{\La(Y)}$. This fact will be used in Theorem \ref{second R-bounded calculus}.
\end{remark}

\subsection{More results for $C_{0}$-groups}\label{more results for groups}

Here we derive some additional results for group generators from Theorems \ref{main theorem} and \ref{second main theorem}.

\begin{proposition}\label{resolvent operators}
Let $-\ui A$ generate a $C_{0}$-group $(U(s))_{s\in\R}\subseteq\La(X)$ on a Banach space $X$ with type $p\in[1,2]$ and cotype $q\in[2,\infty)$. Let $\w>\theta(U)$ and $\alpha,\lambda\in\C$ with $\Real(\alpha)>\frac{1}{p}-\frac{1}{q}$ and $\Real(\lambda)>\w$. Then there exists a constant $C\geq 0$ such that $\D((\lambda+\ui A)^{\alpha})\subseteq \D(f(A))$ and
\begin{align*}
\norm{f(A)(\lambda+\ui A)^{-\alpha}}_{\La(X)}\leq C\norm{f}_{\HT^{\infty}\!(\St_{\w})}
\end{align*}
for all $f\in\HT^{\infty}\!(\St_{\w})$.
\end{proposition}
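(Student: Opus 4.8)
The plan is to reduce Proposition \ref{resolvent operators} to Theorem \ref{main theorem} by absorbing the resolvent factor into a function of $A$ on the strip. Write $\theta:=\frac{1}{p}-\frac{1}{q}$ and fix $\alpha,\lambda$ as in the statement with $\Real(\alpha)>\theta$ and $\Real(\lambda)>\w$. First I would consider the auxiliary function $g_{\lambda,\alpha}(z):=(\lambda+\ui z)^{-\alpha}$ for $z\in\St_{\w}$. Since $\Real(\lambda)>\w$, the quantity $\lambda+\ui z$ stays in a fixed right half-plane bounded away from zero for $z\in\St_{\w}$, so $g_{\lambda,\alpha}\in\HT^{\infty}\!(\St_{\w})$; moreover it has polynomial decay of order $\Real(\alpha)>\theta$ along horizontal lines, hence $g_{\lambda,\alpha}\in\E(\St_{\w})$. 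The composition rule for the strip-type calculus (or directly the multiplicativity of the Hille--Phillips calculus, via Lemma \ref{decay implies fourier transform}) gives $g_{\lambda,\alpha}(A)=(\lambda+\ui A)^{-\alpha}$, and this operator maps $X$ boundedly into $\D_{A}(\theta,1)$.

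The key intermediate claim is thus: $(\lambda+\ui A)^{-\alpha}\in\La(X,\DAPQ)$. I would prove this by noting that $g_{\lambda,\alpha}\in\E(\St_{\w})$ has decay order strictly exceeding $\theta$, and that functions in $\E(\St_{\w})$ with decay order $>\theta$ produce operators landing in the real interpolation space $\D_{A}(\theta,1)$; concretely, one interpolates the bound $\|g(A)\|_{\La(X)}\lesssim\|g\|_\infty$ against the bound $\|g(A)\|_{\La(X,\D(A))}\lesssim \|g\|_\infty + \|zg(z)\|_\infty$ using \eqref{interpolation inequality} applied to the family of dilates/translates, together with the density of $\D(A^{2})$. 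Alternatively, and more cleanly, one can split $\alpha=\theta'+(\alpha-\theta')$ with $\theta<\theta'<\Real(\alpha)$ real, write $(\lambda+\ui A)^{-\alpha}=(\lambda+\ui A)^{-\theta'}(\lambda+\ui A)^{-(\alpha-\theta')}$, use that $(\lambda+\ui A)^{-\theta'}$ maps $X$ into $\D((\lambda+\ui A)^{\theta'})\subseteq\D_{A}(\theta,1)$ (the latter inclusion holding by the standard fact that $\D(B^{\theta'})\hookrightarrow\D_{B}(\theta,1)$ for $\theta<\theta'$, applied to $B=\lambda+\ui A$), and that $(\lambda+\ui A)^{-(\alpha-\theta')}$ is bounded on $X$. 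This is the step I expect to require the most care, since one must track that all these fractional power manipulations are justified for the strip-type (rather than sectorial) operator $A$, using that $\lambda+\ui A$ is sectorial of angle $<\pi/2$ when $\Real(\lambda)>\w$.

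Granting the intermediate claim, the proof concludes quickly. For $f\in\HT^{\infty}\!(\St_{\w})$ and $x\in X$, set $y:=(\lambda+\ui A)^{-\alpha}x\in\DAPQ$. Then by the composition property of the calculus $f(A)(\lambda+\ui A)^{-\alpha}x=f(A)y$, so $\DAPQ\subseteq\D(f(A))$ together with the bound $(\lambda+\ui A)^{-\alpha}x\in\DAPQ$ shows $x\in\D(f(A)(\lambda+\ui A)^{-\alpha})$, i.e.\ $\D((\lambda+\ui A)^{\alpha})\subseteq\D(f(A))$ in the sense that $f(A)(\lambda+\ui A)^{-\alpha}$ is everywhere defined. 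Finally, applying Theorem \ref{main theorem},
\begin{align*}
\norm{f(A)(\lambda+\ui A)^{-\alpha}x}_{X}=\norm{f(A)y}_{X}\leq C_{1}\norm{f}_{\HT^{\infty}\!(\St_{\w})}\norm{y}_{\theta,1}\leq C_{1}C_{2}\norm{f}_{\HT^{\infty}\!(\St_{\w})}\norm{x}_{X},
\end{align*}
where $C_{1}$ is the constant from Theorem \ref{main theorem} and $C_{2}:=\|(\lambda+\ui A)^{-\alpha}\|_{\La(X,\DAPQ)}$. Taking the supremum over $\|x\|_{X}\leq1$ gives the claim with $C:=C_{1}C_{2}$. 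One small point to verify along the way is that $f(A)(\lambda+\ui A)^{-\alpha}=(f\cdot g_{\lambda,\alpha})(A)$ as unbounded operators, which follows from the product rule for the strip-type calculus since $g_{\lambda,\alpha}$ regularizes and $f\cdot g_{\lambda,\alpha}$ is again regularizable.
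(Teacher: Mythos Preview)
Your approach is essentially the paper's: both reduce to Theorem \ref{main theorem} via the continuous embedding $\D((\lambda+\ui A)^{\alpha})\hookrightarrow\DAPQ$ when $\Real(\alpha)>\theta=\frac{1}{p}-\frac{1}{q}$. The paper simply cites \cite[Propositions 1.1.4 and 4.1.7]{Lunardi09} for this embedding, while you spell it out via the splitting $\alpha=\theta'+(\alpha-\theta')$ with $\theta<\theta'<\Real(\alpha)$ real; these are the same idea.

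There is, however, one genuine gap. Your argument tacitly assumes $\theta\in(0,1)$, but the statement allows $p=q=2$, i.e.\ $\theta=0$. In that case $\D_{A}(0,1)=\{0\}$ (as noted in the paper), so your intermediate claim $(\lambda+\ui A)^{-\alpha}\in\La(X,\D_{A}(0,1))$ is false and Theorem \ref{main theorem} is vacuous. The paper handles this case separately by invoking Theorem \ref{second main theorem}: a space with type $2$ and cotype $2$ is isomorphic to a Hilbert space, hence to a $2$-convex $2$-concave Banach lattice, so $A$ has a bounded $\HT^{\infty}\!(\St_{\w})$-calculus on all of $X$ and there is nothing further to prove.

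A minor technical slip that does not affect your main line: $g_{\lambda,\alpha}$ has decay of order $\Real(\alpha)$, which need not exceed $1$, so in general $g_{\lambda,\alpha}\notin\E(\St_{\w})$. Thus the remarks about Lemma \ref{decay implies fourier transform} and the Hille--Phillips calculus need adjustment (one regularizes instead); your route (b) through the fractional-domain embedding avoids this entirely and is the correct way to proceed.
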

\begin{proof}
The case $p=q=2$ follows from Theorem \ref{second main theorem}. If $\frac{1}{p}-\frac{1}{q}\in(0,1)$ then, by \cite[Propositions 1.1.4 and 4.1.7]{Lunardi09}, $\D((\lambda+\ui A)^{\alpha})\subseteq \DAPQ$ continuously. Hence the proof is concluded by appealing to Theorem \ref{main theorem}.
\end{proof}

One may equivalently formulate Proposition \ref{resolvent operators} as a statement about boundedness on $X$ of the calculus for functions with sufficient decay:

\begin{proposition}\label{functions with decay}
Let $-\ui A$ generate a $C_{0}$-group $(U(s))_{s\in\R}\subseteq\La(X)$ on a Banach space $X$ with type $p\in[1,2]$ and cotype $q\in[2,\infty)$. Let $\w>\theta(U)$ and $\alpha,\lambda\in\C$ with $\Real(\alpha)>\frac{1}{p}-\frac{1}{q}$ and $\Real(\lambda)>\w$. Then there exists a constant $C\geq 0$ such that the following holds. Let $f\in\HT^{\infty}\!(\St_{\w})$ be such that $f(z)\in O(\abs{z}^{-\alpha})$ as $\abs{\Real(z)}\to\infty$. Then $f(A)\in\La(X)$ with
\begin{align}\label{decay estimate}
\norm{f(A)}_{\La(X)}\leq C\!\sup_{z\in\St_{\w}}\abs{\lambda+\ui z}^{\alpha}\abs{f(z)}.
\end{align}
\end{proposition}
\begin{proof}
Apply Proposition \ref{resolvent operators} to $(\lambda+\ui \cdot)^{\alpha}f(\cdot)\in\HT^{\infty}\!(\St_{\w})$.
\end{proof}

In Propositions \ref{resolvent operators} and \ref{functions with decay} one may let $\alpha=\frac{1}{p}-\frac{1}{q}$ if $X$ is isomorphic to a complemented subspace of a $p$-convex and $q$-concave Banach lattice, as follows from Theorem \ref{second main theorem}. 

\begin{corollary}\label{cor:decay order 1/2}
Let $\w>0$ and let $f\in\HT^{\infty}(\St_{\w})$ be such that $f(z)\in O(\abs{z}^{-1/2})$ as $\abs{\Real(z)}\to\infty$. Then $f=\F\mu$, where $\mu\in\M_{\w'}(\R)$ for all $\w'\in[0,\w)$. If in addition $f$ is bounded and holomorphic on $\{z\in\C\mid \Imag(z)>-w\}$ then $\supp(\mu)\subseteq [0,\infty)$. 
\end{corollary}
\begin{proof}
For $\w'\in[0,\w)$, let $A:=\ui \frac{\ud }{\ud t}$ with maximal domain on $X:=L^{1}(\R,\ue^{\w'\abs{t}}\ud t)$. Then $-\ui A$ generates the left translation group $(U(s))_{s\in\R}\subseteq\La(X)$, and $\theta(U)=\w'$. By Proposition \ref{functions with decay}, $f(A)\in\La(X)$. Now \cite[Proposition 2.3]{Haase07} implies that $f=\F\mu$ for some $\mu\in\M_{\w'}(\R)$. By uniqueness of the Fourier transform, $\mu$ is independent of the choice of $\w'\in[0,\w)$. The final statement follows from an application of Liouville's theorem.
\end{proof}

\begin{remark}\label{rem:trivial results}
It follows from Corollary \ref{cor:decay order 1/2} that the conclusion of Theorem \ref{second main theorem} holds without any assumptions on the Banach space $X$ if $\tfrac{1}{p}-\tfrac{1}{q}\geq \tfrac{1}{2}$. Indeed, let $-\ui A$ generate a $C_{0}$-group $(U(s))_{s\in\R}\subseteq\La(X)$ on a general Banach space $X$, and let $\lambda>\w>\theta(U)$ and $f\in\HT^{\infty}(\St_{\w})$. By Corollary \ref{cor:decay order 1/2} and Lemma \ref{decay implies fourier transform}, $f(A)(\lambda+\ui A)^{-1/2}=(f(\cdot)(\lambda+\ui \cdot)^{-1/2})(A)\in\La(X)$ and hence $f(A)\in\La(D((\lambda+\ui A)^{1/2}),X)$. 

Also note that Corollary \ref{cor:decay order 1/2} extends \cite[Lemma 2.4]{Haase-Rozendaal13} from $\alpha>1/2$ to $\alpha=1/2$.
\end{remark}

\begin{remark}\label{does not follow from mikhlin result}
Let $-\ui A$ generate a $C_{0}$-group $(U(s))_{s\in\R}\subseteq\La(X)$ on a UMD space $X$. In \cite{Haase09} it is shown that $A$ has a bounded $\HT^{\infty}_{1}\!(\St_{\w})$-calculus for all $\w>\theta(U)$, where $\HT^{\infty}_{1}\!(\St_{\w})$ is defined by \eqref{mikhlin norm}. Since $X$ has type $p\in(1,2]$ and cotype $q\in[2,\infty)$, one can compare our results with those in \cite{Haase09}. We note that our results do not imply those in \cite{Haase09}, nor does \cite{Haase09} imply the results in this article. Indeed, let $\w>\theta(U)$. Then $f(z):=(\lambda+\ui z)^{-\alpha}$ defines an element of $\HT^{\infty}_{1}\!(\St_{\w})$ for all $\alpha>0$ and $\lambda>\w$, but Proposition \ref{functions with decay} does not apply to $f$ if $\alpha\in(0,\frac{1}{p}-\frac{1}{q})$. Also, the function $f\in\HT^{\infty}(\St_{\w})$ given by $f(z):=\ue^{-\ui z}(\lambda+\ui z)^{-\alpha}$ is not an element of $\HT^{\infty}_{1}(\St_{\w})$ if $\alpha\in(\frac{1}{p}-\frac{1}{q},1)$ but decays with order $\alpha>\frac{1}{p}-\frac{1}{q}$ at infinity.

Although in both examples it is clear that $f(A)\in\La(X)$, the difference between estimating $\norm{f(A)}_{\La(X)}$ by \eqref{mikhlin norm} or using \eqref{decay estimate} is relevant for numerical approximation methods, as is shown in Section \ref{rational approximation}.
\end{remark}

We now obtain a version of Theorem \ref{main theorem} for other interpolation spaces. 

\begin{proposition}\label{other interpolation spaces}
Let $-\ui A$ generate a $C_{0}$-group $(U(s))_{s\in\R}\subseteq\La(X)$ on a Banach space $X$ with type $p\in[1,2]$ and cotype $q\in[2,\infty)$. Let $\w>\theta(U)$, $r\in(0,1-\frac{1}{p}+\frac{1}{q})$ and $u\in[1,\infty]$. Then there exists a constant $C\geq 0$ such that
\begin{align*}
\norm{f(A)x}_{r,u}\leq C\norm{f}_{\HT^{\infty}\!(\St_{\w})}\norm{x}_{r+\frac{1}{p}-\frac{1}{q},u}
\end{align*}
for all $f\in\HT^{\infty}\!(\St_{\w})$ and $x\in\D_{A}(r+\frac{1}{p}-\frac{1}{q},u)$.
\end{proposition}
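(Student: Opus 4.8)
The plan is to deduce this from Theorem~\ref{main theorem} by a real interpolation argument, after upgrading Theorem~\ref{main theorem} to a ``regularity--shifted'' mapping property of $f(A)$. Write $\theta:=\tfrac1p-\tfrac1q$. If $p=q=2$ then $X$ is isomorphic to a Hilbert space, $f(A)\in\La(X)$ by \cite{Boyadzhiev-deLaubenfels94}, and since $f(A)$ commutes with $A$ it is bounded on each $\D_{A}(r,s)$ by \eqref{interpolation inequality}; so I assume $\theta\in(0,1)$, and note that $r+\theta<1$ by the hypothesis on $r$. Fix $f\in\HT^{\infty}\!(\St_{\w})$ and abbreviate $\norm{\cdot}=\norm{\cdot}_{\HT^{\infty}\!(\St_{\w})}$. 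The two mapping properties I would use are: \emph{(i)} $f(A)\colon\D_{A}(\theta,1)\to X$ is bounded with norm $\leq C\norm{f}$, which is exactly Theorem~\ref{main theorem}; and \emph{(ii)} $f(A)\colon\D_{A}(1+\theta,1)\to\D(A)$ is bounded with norm $\leq C'\norm{f}$.

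To prove \emph{(ii)} I would use that $\D_{A}(1+\theta,1)=\{x\in\D(A)\mid Ax\in\D_{A}(\theta,1)\}$, with $\norm{x}_{1+\theta,1}$ equivalent to $\norm{x}_{\theta,1}+\norm{Ax}_{\theta,1}$, a standard property of the real interpolation scale of a closed operator (see \cite{Lunardi09}). Fix $\lambda\in\rho(A)$. For $x\in\D_{A}(1+\theta,1)$ both $x$ and $Ax$ lie in $\D_{A}(\theta,1)\subseteq\D(f(A))$ by \emph{(i)}, hence so does $z:=(\lambda-A)x$. Since $f(A)$ commutes with $R(\lambda,A)$, as it commutes with the resolvents of $A$ (see \cite[Chapter 1]{Haase06a}), we get $f(A)x=R(\lambda,A)f(A)z\in\D(A)$, and applying $\lambda-A$ gives $(\lambda-A)f(A)x=f(A)z=\lambda f(A)x-f(A)Ax$, whence $Af(A)x=f(A)Ax$. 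Applying \emph{(i)} to $x$ and to $Ax$ then yields
\begin{align*}
\norm{f(A)x}_{\D(A)}=\norm{f(A)x}_{X}+\norm{f(A)Ax}_{X}\leq C\norm{f}\big(\norm{x}_{\theta,1}+\norm{Ax}_{\theta,1}\big),
\end{align*}
which is \emph{(ii)}.

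Finally, the interpolation property of the real method (exactness of exponent $r$), applied to $f(A)$ with respect to the couples $(\D_{A}(\theta,1),\D_{A}(1+\theta,1))$ and $(X,\D(A))$ — the two--couple analogue of \eqref{interpolation inequality} — shows that $f(A)$ maps $(\D_{A}(\theta,1),\D_{A}(1+\theta,1))_{r,s}$ into $(X,\D(A))_{r,s}=\D_{A}(r,s)$ with norm at most $(C\norm{f})^{1-r}(C'\norm{f})^{r}$. By the reiteration theorem for the real interpolation scale of $A$ (see \cite{Lunardi09}), and since $0<\theta<1+\theta<2$,
\begin{align*}
\big(\D_{A}(\theta,1),\D_{A}(1+\theta,1)\big)_{r,s}=\D_{A}\big((1-r)\theta+r(1+\theta),s\big)=\D_{A}(r+\theta,s).
\end{align*}
Since $r+\theta>\theta$, this space embeds continuously into $\D_{A}(\theta,1)\subseteq\D(f(A))$, so the statement is meaningful, and the two displays together give the asserted estimate.

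The main obstacle I expect is bookkeeping rather than a new analytic idea: establishing \emph{(ii)} (the precise sense in which $f(A)$ commutes with $A$, and the domain description of $\D_{A}(1+\theta,1)$), and then lining up the interpolation indices, keeping in mind that the reiteration above should be read inside the ambient couple $(X,\D(A^{2}))$, with respect to which $\D_{A}(\theta,1)$ and $\D_{A}(1+\theta,1)$ are the genuine (non--integer) intermediate spaces. All of the analytic content is already contained in Theorem~\ref{main theorem}.
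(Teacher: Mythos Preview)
Your argument is correct, and it takes a genuinely different route from the paper's proof. The paper does \emph{not} interpolate between the two endpoint mapping properties you set up. Instead, it first treats the case $s=1$ by passing to the part $A_{r,1}$ of $A$ in $\D_{A}(r,1)$: since $-\ui A_{r,1}$ again generates a $C_{0}$-group with $\theta(U\!\restriction_{\D_{A}(r,1)})\leq\theta(U)$, Theorem~\ref{main theorem} is applied \emph{on the Banach space $\D_{A}(r,1)$} to obtain a bound $\D_{A_{r,1}}(\theta,1)\to\D_{A}(r,1)$, and then $(\D_{A}(r,1),\D(A_{r,1}))_{\theta,1}$ is identified with $\D_{A}(r+\theta,1)$ via reiteration inside $(X,\D(A^{2}))$. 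The general $s\in[1,\infty]$ is then obtained by a second reiteration step, writing $\D_{A}(r,s)$ and $\D_{A}(r+\theta,s)$ as real interpolates of two $s=1$-spaces.

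By contrast, you stay on $X$ and $\D(A)$, deduce the shifted estimate $f(A):\D_{A}(1+\theta,1)\to\D(A)$ from Theorem~\ref{main theorem} and the commutation $Af(A)=f(A)A$, and then interpolate once between the couples $(\D_{A}(\theta,1),\D_{A}(1+\theta,1))$ and $(X,\D(A))$, handling all $s$ in a single stroke. Your approach has the pleasant feature that Theorem~\ref{main theorem} is only ever invoked on $X$ itself; the paper's proof implicitly uses that $\D_{A}(r,1)$ again has type $p$ and cotype $q$ in order to apply Theorem~\ref{main theorem} there, a point that is not addressed explicitly. Your bookkeeping for step~(ii) (the description $\D_{A}(1+\theta,1)=\{x\in\D(A)\mid Ax\in\D_{A}(\theta,1)\}$ via the isomorphism $\lambda-A$, and the resolvent commutation) is sound, and the final reiteration inside $(X,\D(A^{2}))$ is exactly the same device the paper uses.
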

\begin{proof}
Let $f\in\HT^{\infty}\!(\St_{\w})$ and first consider the case where $u=1$. Then the part $-\ui A_{r,1}$ of $-\ui A$ in $\D_{A}(r,1)$ generates the $C_{0}$-group $(U(s)\!\restriction_{\D_{A}(r,1)})_{s\in\R}\subseteq\La(\D_{A}(r,1))$ which satisfies $\theta(U\!\restriction_{\D_{A}(r,1)})\leq \theta(U)$, by \cite[Lemma 2.2]{Haase-Rozendaal16}. Hence Theorem \ref{main theorem} yields 
\begin{align}\label{estimate other interpolation spaces}
\norm{f(A_{r,1})x}_{r,1}\leq C\norm{f}_{\HT^{\infty}\!(\St_{\w})}\norm{x}_{(\D_{A}(r,1),\D(A_{r,1}))_{\frac{1}{p}-\frac{1}{q},1}}
\end{align}
for all $x\in(\D_{A}(r,1),\D(A_{r,1}))_{\frac{1}{p}-\frac{1}{q},1}$. Moreover, it follows from \cite[Proposition 3.1.5]{Lunardi09} that $\D_{A}(r,1)=(X,\D(A^{2}))_{\frac{r}{2},1}$ and $\D(A_{r,1})=(X,\D(A^{2}))_{\frac{r+1}{2},1}$ with equivalent norms. Hence, by the Reiteration Theorem (see \cite[Theorem 1.3.5]{Lunardi09}) and again by \cite[Proposition 3.1.5]{Lunardi09},
\begin{align*}
(\D_{A}(r,1),\D(A_{r,1}))_{\frac{1}{p}-\frac{1}{q},1}&=((X,\D(A^{2}))_{\frac{r}{2},1},(X,\D(A^{2}))_{\frac{r+1}{2},1})_{\frac{1}{p}-\frac{1}{q},1}\\
&=(X,\D(A^{2}))_{\frac{1}{2}(r+\frac{1}{p}-\frac{1}{q}),1}=\D_{A}(r+\tfrac{1}{p}-\tfrac{1}{q},1).
\end{align*}
Combine this with \eqref{estimate other interpolation spaces}, using that $f(A_{r,1})x=f(A)x$ for all $x\in \D(f(A_{r,1}))$ by \cite[Lemma 2.2]{Haase-Rozendaal16}, to conclude the proof in the case where $u=1$.

For general $u\in[1,\infty]$, the Reiteration Theorem yields
\begin{align*}
\D_{A}(r,u)=(\D_{A}(\theta_{1},1),\D_{A}(\theta_{2},1))_{\theta,u}
\end{align*}
and
\begin{align*}
\D_{A}(r+\tfrac{1}{p}-\tfrac{1}{q},u)=(\D_{A}(\theta_{1}+\tfrac{1}{p}-\tfrac{1}{q},1),\D_{A}(\theta_{2}+\tfrac{1}{p}-\tfrac{1}{q},1))_{\theta,u}
\end{align*}
for certain $\theta_{1},\theta_{2}\in(0,1-\tfrac{1}{p}+\tfrac{1}{q})$ and $\theta\in(0,1)$. Now apply \eqref{interpolation inequality} to what we have already shown to conclude the proof.
\end{proof}

\begin{remark}\label{alternative proof}
For $u<\infty$ Proposition \ref{other interpolation spaces} can be proved directly, without using Theorem \ref{main theorem}. To do so, adapt Proposition \ref{transference principle groups interpolation result} to the setting of Proposition \ref{other interpolation spaces}, using instead of Corollary \ref{Fourier multiplier measure} the more general Proposition \ref{Fourier multipliers under type and cotype}, and then proceed as in the proof of Theorem \ref{main theorem}.
\end{remark}

Similarly, Theorem \ref{second main theorem} extends to other fractional domains.

\begin{proposition}\label{second time other interpolation spaces}
Let $-\ui A$ generate a $C_{0}$-group $(U(s))_{s\in\R}\subseteq\La(X)$ on a complemented subspace $X$ of a $p$-convex and $q$-concave Banach lattice, where $p\in[1,2]$ and $q\in[2,\infty)$. Let $\lambda>\w>\theta(U)$. Then there exists a constant $C\geq 0$ such that
\begin{align*}
\norm{(\lambda+\ui A)^{\alpha}f(A)x}_{X}\leq C\norm{f}_{\HT^{\infty}\!(\St_{\w})}\|(\lambda+\ui A)^{\alpha+\frac{1}{p}-\frac{1}{q}}x\|_{X}
\end{align*}
for all $\alpha\in\C$, $f\in\HT^{\infty}\!(\St_{\w})$ and $x\in\D((\lambda+\ui A)^{\alpha+\frac{1}{p}-\frac{1}{q}})$.
\end{proposition}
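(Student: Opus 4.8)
The plan is to reduce this to Theorem~\ref{second main theorem} by conjugating $f(A)$ with the fractional power $B^{\alpha}$, where $B:=\lambda+\ui A$ and $\theta:=\tfrac1p-\tfrac1q$. As in Proposition~\ref{resolvent operators}, for $\lambda>\w>\theta(U)$ the operator $B$ is injective with well-defined fractional powers $B^{\beta}$, $\beta\in\C$, obeying the usual composition and inversion laws of \cite[Chapter~3]{Haase06a} and compatible with the functional calculus for $A$ via $B^{\beta}=[(\lambda+\ui\,\cdot)^{\beta}](A)$; in particular $B^{-\theta}\in\La(X)$ with $B^{-\theta}=(B^{\theta})^{-1}$. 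One may assume $\theta\in(0,1)$, the case $p=q=2$ being immediate from Theorem~\ref{second main theorem} since then $f(A)$ is bounded and commutes with $B^{\alpha}$.

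First I would record the operator identities behind the conjugation. Let $x\in\D(B^{\alpha+\theta})$. The composition law, applied with the \emph{bounded} factor $B^{-\theta}$, gives $\D(B^{\alpha+\theta})\subseteq\D(B^{\alpha})$ and $B^{\alpha}x=B^{-\theta}(B^{\alpha+\theta}x)$; since the right-hand side lies in $\ran(B^{-\theta})=\D(B^{\theta})$, it follows that $B^{\alpha}x\in\D(B^{\theta})$ with $B^{\theta}(B^{\alpha}x)=B^{\alpha+\theta}x$. Now Theorem~\ref{second main theorem} gives $\D(B^{\theta})\subseteq\D(f(A))$, and, since $\norm{\cdot}_{\D(B^{\theta})}$ is equivalent to $\norm{B^{\theta}\cdot}_{X}$ as $\theta>0$, a constant $C\geq0$ with $\norm{f(A)z}_{X}\leq C\norm{f}_{\HT^{\infty}\!(\St_{\w})}\norm{B^{\theta}z}_{X}$ for all $f\in\HT^{\infty}\!(\St_{\w})$ and $z\in\D(B^{\theta})$. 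Taking $z=B^{\alpha}x$ yields
\begin{align*}
\norm{f(A)(B^{\alpha}x)}_{X}\leq C\norm{f}_{\HT^{\infty}\!(\St_{\w})}\norm{B^{\alpha+\theta}x}_{X}.
\end{align*}
It remains to identify $f(A)(B^{\alpha}x)$ with $(\lambda+\ui A)^{\alpha}f(A)x$. By the product rule for the functional calculus of $A$ one has $f(A)B^{\alpha}\subseteq[(\lambda+\ui\,\cdot)^{\alpha}f](A)$, and the left-hand operator is defined at every $x\in\D(B^{\alpha})$ with $B^{\alpha}x\in\D(f(A))$, hence at every $x\in\D(B^{\alpha+\theta})$ by the previous step. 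Combining the two displays gives the assertion.

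I expect the only delicate point to be the functional-calculus bookkeeping, and it is entirely standard. Since $\alpha$ is an arbitrary complex number, neither $B^{\alpha}$ nor $B^{\alpha+\theta}$ need be bounded or boundedly invertible (for instance when $\Real\alpha=0$ or $\Real\alpha=-\theta$), so one should work throughout with the functional-calculus operator $[(\lambda+\ui\,\cdot)^{\alpha}f](A)$ in place of a naive composition of unbounded operators, regularising it by $(\lambda+\ui\,\cdot)^{-N}$ for large $N$, and invoke the composition and inversion laws for complex powers with their exact domains from \cite[Chapter~3]{Haase06a}. What makes the argument go through uniformly in $\alpha$ is simply that the correcting exponent $-\theta$ is negative, so that $B^{-\theta}$ is bounded. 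One could instead bypass Theorem~\ref{second main theorem} entirely and mimic its proof directly: for $f\in\E(\St_{\w})$ and $\beta\in(\theta(U),\w)$, write $f(A)=U_{\mu}$ with $\mu\in\M_{\beta}(\R)$ and $B^{-\theta}=U_{\nu}$ with $\nu(\ud s)=\tfrac1{\Gamma(\theta)}\ind_{[0,\infty)}(s)s^{\theta-1}\ue^{-\lambda s}\,\ud s$, note $(\lambda+\ui A)^{\alpha}f(A)x=U_{\mu\ast\nu}(B^{\alpha+\theta}x)$ for $x\in\D(B^{\alpha+\theta})$, and apply Proposition~\ref{transference principle groups fractional domain result} together with the bound $\sup_{s\in\R}(1+s^{2})^{\theta/2}\abs{\F(\mu\ast\nu)_{\beta}(s)}\leq C\norm{f}_{\HT^{\infty}\!(\St_{\w})}$ established in the proof of Theorem~\ref{second main theorem}, extending to general $f$ via the Convergence Lemma.
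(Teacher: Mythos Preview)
Your argument is correct and is essentially the paper's own proof: the paper simply says ``apply Theorem~\ref{second main theorem} to $(\lambda+\ui A)^{\alpha}x\in\D((\lambda+\ui A)^{\frac{1}{p}-\frac{1}{q}})$,'' and you carry out exactly this reduction, supplying the functional-calculus bookkeeping (the composition law for $B^{\alpha+\theta}=B^{\theta}B^{\alpha}$ and the identification of $f(A)B^{\alpha}x$ with $B^{\alpha}f(A)x$) that the paper leaves implicit. Your alternative route via Proposition~\ref{transference principle groups fractional domain result} would also work but is unnecessary here.
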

\begin{proof}
Apply Theorem \ref{second main theorem} to $(\lambda+\ui A)^{\alpha}x\in\D((\lambda+\ui A)^{\frac{1}{p}-\frac{1}{q}})$ for each $x\in\D((\lambda+\ui A)^{\alpha+\frac{1}{p}-\frac{1}{q}})$.
\end{proof}

\section{Results for sectorial operators and cosine functions}\label{sectorial and cosine}

In this section we derive from Theorem \ref{main theorem} some results for sectorial operators and generators of cosine functions. By Remark \ref{rem:better calculus with UMD}, the results in this section can be improved on UMD spaces, and by Remark \ref{rem:trivial results} the statements hold on general Banach spaces for $\tfrac{1}{p}-\tfrac{1}{q}\geq \tfrac{1}{2}$.

\subsection{Sectorial operators}\label{sectorial operators}

For $\ph\in(0,\pi)$ let $\Se_{\ph}:=\left\{z\in\C\left|\, \abs{\arg(z)}<\ph\right.\right\}$. An operator $A$ on a Banach space $X$ is said to be a \emph{sectorial operator} of angle $\ph$ if $\sigma(A)\subseteq \overline{\Se_{\ph}}$ and $\sup\left\{\norm{zR(z,A)}\,\mid\psi\in\C\setminus \Se_{\psi}\right\}<\infty$ for all $\psi\in (\ph,\pi)$. 

For sectorial operators one can construct a functional calculus in a similar manner as for strip-type operators. Define $f(A)\in\La(X)$ via a Cauchy-type integral for 
\begin{align*}
f\in\HT^{\infty}_{0}\!(\Se_{\psi}):=\Big\{g\in\HT^{\infty}(\Se_{\psi})\Big|\exists C,\delta>0: \abs{g(z)}\leq C\frac{\abs{z}^{\delta}}{\abs{1+z}^{2\delta}}\Big\},
\end{align*}
set $\ind(A):=I_{X}$ and $(1+\cdot)^{-1}(A):=(1+A)^{-1}$, and then extend linearly and regularize as in \eqref{regularization}. For details see \cite[Chapter 2]{Haase06a}. If $A$ is an injective sectorial operator of angle $\ph\in(0,\pi)$, then $\log(A)$ is defined via the sectorial calculus for $A$, as is $f(A)$ for all $\psi\in(\ph,\pi)$ and $f\in \HT^{\infty}\!(\Se_{\psi})$. A sectorial operator $A$ of angle $\ph\in(0,\pi)$ has \emph{bounded imaginary powers} if $A$ is injective and if $-\ui\log(A)$ generates a $C_{0}$-group $(U(s))_{s\in\R}\subseteq\La(X)$. Then $U(s)=A^{-\ui s}$ for all $s\in\R$, and  $A$ is sectorial of angle $\theta_{A}:=\theta(U)$ if $\theta(U)\in [0,\pi)$, by \cite[Theorem 2]{Pruss-Sohr90}. We write $A\in \mathrm{BIP}(X)$. 

If $A\in \mathrm{BIP}(X)$ and if there exists an $\w\in[0,\pi)$ such that $\{\ue^{-\w |s|}A^{-\ui s}\mid s\in\R\}\subseteq\La(X)$ is $R$-bounded, then $A$ has a bounded $\HT^{\infty}\!(\Se_{\psi})$ calculus for all $\psi\in(\w,\pi)$. Moreover, if $X$ has property $(\alpha)$ as below, then a bounded $H^{\infty}(S_{\psi})$-calculus for $A$ implies that $\{\ue^{-\w |s|}A^{-\ui s}\mid s\in\R\}\subseteq\La(X)$ is $R$-bounded for $\w>\psi$. See \cite[Theorem 7.5]{Kalton-Weis04}. Hence the following result is only of use if $A$ does not have $R$-bounded imaginary powers. 

\begin{theorem}\label{BIP theorem}
Let $A\in\mathrm{BIP}(X)$ with $\theta_{A}<\pi$, where $X$ is a Banach space with type $p\in[1,2]$ and cotype $q\in[2,\infty)$. Let $\psi\in(\theta_{A},\pi)$. Then there exists a constant $C\geq 0$ such that $\DLOGPQBIG\subseteq\D(f(A))$ and
\begin{align*}
\norm{f(A)}_{\La(\DLOGPQ,X)}\leq C\norm{f}_{\HT^{\infty}\!(\Se_{\psi})}
\end{align*}
for all $f\in\HT^{\infty}\!(\Se_{\psi})$.
\end{theorem}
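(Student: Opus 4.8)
The plan is to reduce the sectorial statement for $A\in\mathrm{BIP}(X)$ to the strip-type statement of Theorem \ref{main theorem} applied to the group generator $-\ui\log(A)$. Write $B:=\log(A)$, so that by assumption $-\ui B$ generates a $C_0$-group $(U(s))_{s\in\R}=(A^{-\ui s})_{s\in\R}$ on $X$ with $\theta(U)=\theta_A<\pi$, and $B$ is a strip-type operator of height $\theta_A$. Since $X$ has type $p$ and cotype $q$, Theorem \ref{main theorem} applies to $B$: for every $\w\in(\theta_A,\psi)$ there is a constant $C\geq 0$ with $\D_B(\tfrac1p-\tfrac1q,1)\subseteq\D(g(B))$ and $\norm{g(B)}_{\La(\D_B(\frac1p-\frac1q,1),X)}\leq C\norm{g}_{\HT^\infty(\St_\w)}$ for all $g\in\HT^\infty(\St_\w)$. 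Note $\D_B(\theta,1)=\D_{\log(A)}(\theta,1)$ by definition of $B$.

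The key step is the compatibility of the two functional calculi under the substitution $z\mapsto\ue^z$. The exponential map sends the strip $\St_\w$ biholomorphically onto the sector $\Se_\w$ (for $\w<\pi$), so $f\mapsto f\circ\exp$ is an isometric algebra isomorphism from $\HT^\infty(\Se_\psi)$ onto $\HT^\infty(\St_\psi)$, and similarly on the classes $\HT^\infty_0(\Se_\psi)$ versus $\E(\St_\psi)$-type spaces. The composition rule $f(A)=(f\circ\exp)(\log(A))$ — i.e.\ $f(A)=g(B)$ with $g:=f\circ\exp$ — is standard for the holomorphic functional calculus; it is proved for functions in the elementary (regularizable) class by a direct Cauchy-integral computation and then extended by regularization, and it is precisely this identity that McIntosh's argument in \cite{McIntosh86} exploits. (One should cite \cite[Chapter 4]{Haase06a} or the composition-rule results there.) First I would establish, for $f$ in the appropriate dense subclass where both sides are given by convergent integrals, that $f(A)=(f\circ\exp)(B)$ as bounded operators, then note both calculi are consistent with the respective regularization procedures.

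Given these ingredients the proof is short: pick $\w\in(\theta_A,\psi)$; for $f\in\HT^\infty(\Se_\psi)$ set $g:=f\circ\exp\in\HT^\infty(\St_\psi)\subseteq\HT^\infty(\St_\w)$ with $\norm{g}_{\HT^\infty(\St_\w)}\leq\norm{g}_{\HT^\infty(\St_\psi)}=\norm{f}_{\HT^\infty(\Se_\psi)}$; apply Theorem \ref{main theorem} to $B$ to get $g(B)\in\La(\D_B(\tfrac1p-\tfrac1q,1),X)$ with the asserted bound; then invoke the composition rule $f(A)=g(B)$ and the identification $\D_B(\tfrac1p-\tfrac1q,1)=\D_{\log(A)}(\tfrac1p-\tfrac1q,1)$. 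One should run the composition identity first on $f\in\HT^\infty_0(\Se_\psi)$ and then, exactly as in the proof of Theorem \ref{main theorem}, pass to general $f\in\HT^\infty(\Se_\psi)$ via a regularizing net $f\tau_k$ (with $\tau_k$ chosen so that $\tau_k\circ\exp$ mimics the regularizers used there) together with the Convergence Lemma \ref{convergence lemma}, using density of $\D(B^2)$ in $\D_{\log(A)}(\tfrac1p-\tfrac1q,1)$.

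The main obstacle is the composition rule $f(A)=(f\circ\exp)(\log(A))$: making sure it holds at the level of unbounded operators (equality of domains, not just agreement on a core) for all $f\in\HT^\infty(\Se_\psi)$, and checking that the regularization classes transform correctly under $\exp$ so that the limiting argument from Theorem \ref{main theorem} goes through verbatim. Once one quotes the relevant composition theorem from \cite{Haase06a} this is routine, but it is the only nontrivial point; everything else is bookkeeping about the isometry $f\mapsto f\circ\exp$ and a direct appeal to Theorem \ref{main theorem}.
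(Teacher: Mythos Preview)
Your proposal is correct and follows essentially the same route as the paper: reduce to Theorem \ref{main theorem} for $B=\log(A)$ via the composition rule and the isometry $f\mapsto f\circ\exp$ between $\HT^{\infty}\!(\Se_{\psi})$ and $\HT^{\infty}\!(\St_{\psi})$. The paper's proof is two lines because it cites \cite[Theorem 4.2.4]{Haase06a} for the identity $(g\circ\log)(A)=g(\log(A))$ valid for \emph{all} $g\in\HT^{\infty}\!(\St_{\psi})$ at once, so your separate treatment of $\HT^{\infty}_{0}$ functions followed by a Convergence Lemma argument, and the intermediate choice of $\w\in(\theta_{A},\psi)$, are unnecessary detours.
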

\begin{proof}
By \cite[Theorem 4.2.4]{Haase06a}, $(g\circ\log)(A)=g(\log(A))$ for all $g\in\HT^{\infty}\!(\St_{\psi})$. Since $g\mapsto g\circ\log$ is an isometric isomorphism $\HT^{\infty}\!(\St_{\psi})\to\HT^{\infty}\!(\Se_{\psi})$, Theorem \ref{main theorem} concludes the proof.
\end{proof}

\begin{remark}\label{rem:result of Dore}
It was shown by Dore in \cite{Dore01} that each sectorial operator $A$ with dense range on a general Banach space $X$ has a bounded $\HT^{\infty}$-calculus on the real interpolation spaces $(X,\D(A)\cap \ran(A))_{\theta,r}$ for all $\theta\in(0,1)$ and $q\in[1,\infty]$. We note that this statement implies neither Proposition \ref{other interpolation spaces} nor Theorem \ref{BIP theorem}. Indeed, after rotation Proposition \ref{other interpolation spaces} deals with functions on strips around the imaginary axis, whereas for unbounded group generators \cite{Dore01} only applies to $f\in\HT^{\infty}(\Se_{\psi})$ for $\psi>\tfrac{\pi}{2}$. Moreover, for all $\theta\in(0,1)$ and $q\in[1,\infty]$ it holds that $(X,\D(A)\cap \ran(A))_{\theta,r}\subseteq \D(\log(A))$. Since in general $D(\log(A))\subsetneq \DLOGPQBIG$, Theorem \ref{BIP theorem} does not follow from \cite{Dore01}. 
\end{remark}

It follows from Proposition \ref{other interpolation spaces} that $f(A):\D_{\log(A)}(r+\tfrac{1}{p}-\tfrac{1}{q},u)\to\D_{\log(A)}(r,u)$ is bounded for each $r\in(0,1-\tfrac{1}{p}+\tfrac{1}{q})$ and each $u\in[1,\infty]$, and
\begin{align*}
\norm{f(A)}_{\La(\D_{\log(A)}(r+\frac{1}{p}-\frac{1}{q},u),\D_{\log(A)}(r,u))}\leq C\norm{f}_{\HT^{\infty}\!(\Se_{\psi})}.
\end{align*}
In the same manner one can deduce from Proposition \ref{functions with decay} that each $f\in\HT^{\infty}\!(\Se_{\psi})$ such that $f(z)\in O(\abs{2\pi-\ui\log(z)}^{-\alpha})$ as $z\to0$ or $z\to\infty$ for some $\alpha>\frac{1}{p}-\frac{1}{q}$ satisfies $f(A)\in\La(X)$. If $X$ is isomorphic to a complemented subspace of a $p$-convex and $q$-concave Banach lattice then one may let $\alpha=\tfrac{1}{p}-\tfrac{1}{q}$.

From Theorem \ref{BIP theorem} one obtains unconditionality of the functional calculus and square function estimates in the same manner as in \cite[Theorem 12.2]{Kunstmann-Weis04}.

\begin{corollary}\label{unconditionality}
Let $A\in\mathrm{BIP}(X)$ with $\theta_{A}<\pi$, where $X$ is a Banach space with type $p\in[1,2]$ and cotype $q\in[2,\infty)$. Let $\psi\in(\theta_{A},\pi)$, $f\in\HT^{\infty}_{0}\!(\Se_{\psi})$, and let $(r_{k})_{k\in\Z}$ be a Rademacher sequence on a probability space $(\Omega,\mathbb{P})$. Then the following assertions hold:
\begin{itemize}
\item $\sup\{\|\sum_{k=-n}^{n}\epsilon_{k}f(2^{k}tA)\|_{\La(\D_{\log(A)}(\frac{1}{p}-\frac{1}{q},1),X)}\mid n\in\N, t>0,\abs{\epsilon_{k}}=1\}<\infty$;
\item there exists a constant $C\geq 0$ such that 
\begin{align*}
\sup_{t>0}\Big\|\sum_{k=-\infty}^{\infty}r_{k}f(2^{k}tA)x\Big\|_{\Ell^{2}(\Omega;X)}\leq C\norm{x}_{\D_{\log(A)}(\frac{1}{p}-\frac{1}{q},1)}
\end{align*}
for all $x\in\D_{\log(A)}(\frac{1}{p}-\frac{1}{q},1)$, and
\begin{align*}
\sup_{t>0}\Big\|\sum_{k=-\infty}^{\infty}r_{k}f(2^{k}tA)^{*}x^{*}\Big\|_{\Ell^{2}(\Omega;\D_{\log(A)}(\frac{1}{p}-\frac{1}{q},1)^{*})}\leq C\norm{x^{*}}_{X^{*}}
\end{align*}
for all $x^{*}\in X^{*}$.
\end{itemize}
\end{corollary}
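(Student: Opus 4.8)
The plan is to reduce the corollary to the bounded $\HT^{\infty}$-calculus result of Theorem \ref{BIP theorem} by means of the classical square function and unconditionality machinery, exactly as in \cite[Theorem 12.2]{Kunstmann-Weis04}, but now keeping track of the fact that all estimates are \emph{from} the real interpolation space $\D_{\log(A)}(\tfrac{1}{p}-\tfrac{1}{q},1)$ \emph{to} $X$. First I would observe that Theorem \ref{BIP theorem} (together with the consequence of Proposition \ref{other interpolation spaces} noted immediately after its proof, but for the present statement only Theorem \ref{BIP theorem} is needed) gives a uniform bound $\norm{g(A)}_{\La(\D_{\log(A)}(\frac{1}{p}-\frac{1}{q},1),X)}\leq C\norm{g}_{\HT^{\infty}(\Se_{\psi})}$ for all $g\in\HT^{\infty}(\Se_{\psi})$, and that this passes to the part $A_{r,s}$ of $A$ in the relevant interpolation spaces in the usual way (as in the proof of Proposition \ref{other interpolation spaces}). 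The whole point is that we now have a calculus that behaves like a bounded $\HT^{\infty}$-calculus, just with a shift of interpolation parameter, so the abstract arguments that derive unconditionality from a bounded $\HT^{\infty}$-calculus apply verbatim.

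Next I would set up the square function estimate. Fix $f\in\HT^{\infty}_{0}(\Se_{\psi})$, $t>0$ and $n\in\N$, and consider the function $z\mapsto \sum_{k=-n}^{n}\epsilon_{k}f(2^{k}tz)$ on $\Se_{\psi}$. The key point, which is the heart of the Kunstmann--Weis argument, is that this family is uniformly bounded in $\HT^{\infty}(\Se_{\psi})$, uniformly in $n$, $t$ and the signs $\epsilon_{k}$: because $f\in\HT^{\infty}_{0}$ decays polynomially at $0$ and $\infty$, the dilates $f(2^{k}tz)$ have geometrically decaying sup-norms away from an annulus depending on $k$, so the sum is bounded by a convergent geometric series independent of $n,t,(\epsilon_{k})$. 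Applying Theorem \ref{BIP theorem} to $g(z):=\sum_{k=-n}^{n}\epsilon_{k}f(2^{k}tz)$ then yields the first bullet. For the second bullet one replaces the signs by a Rademacher sequence $(r_{k})$ and passes from the scalar bound to the $\Ell^{2}(\Omega;X)$-bound; here one uses the standard fact (Kahane's inequality and Fubini, exactly as in \cite[Theorem 12.2]{Kunstmann-Weis04} or \cite[Section 12]{Kunstmann-Weis04}) that $R$-boundedness of a family of multiplier-type operators obtained from a bounded $\HT^{\infty}$-calculus gives the Rademacher square function estimate, together with the Convergence Lemma \ref{convergence lemma} to let $n\to\infty$ in the sum. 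The dual estimate follows by taking adjoints: $f(A)^{*}=\bar f(A^{*})$ on $X^{*}$ (with $\bar f(z):=\overline{f(\bar z)}$), $A^{*}\in\mathrm{BIP}(X^{*})$ with the same angle, and the adjoint of the bounded operator $g(A):\D_{\log(A)}(\tfrac{1}{p}-\tfrac{1}{q},1)\to X$ is bounded $X^{*}\to\D_{\log(A)}(\tfrac{1}{p}-\tfrac{1}{q},1)^{*}$ with the same norm, so the same square function argument applied on $X^{*}$ produces the stated inequality.

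The main obstacle, as always in these arguments, is the uniform $\HT^{\infty}$-bound on the finite sums $\sum_{k=-n}^{n}\epsilon_{k}f(2^{k}tz)$ and the convergence of the series as $n\to\infty$; this is where the hypothesis $f\in\HT^{\infty}_{0}(\Se_{\psi})$ rather than merely $f\in\HT^{\infty}(\Se_{\psi})$ is essential, and one must be careful that the polynomial decay exponent $\delta$ in the definition of $\HT^{\infty}_{0}$ is preserved under dilation so that the geometric series argument really goes through uniformly on all of $\Se_{\psi}$. A secondary, more bookkeeping-type obstacle is to make precise the passage between the scalar (sign) estimate and the Rademacher $\Ell^{2}(\Omega;X)$-estimate from $\D_{\log(A)}(\tfrac{1}{p}-\tfrac{1}{q},1)$: one must check that applying the functional calculus coordinatewise in $\Omega$ and invoking the contraction principle does not cost anything beyond the constant from Theorem \ref{BIP theorem}, and that the limit $n\to\infty$ can be taken inside the $\Ell^{2}(\Omega;X)$-norm using Lemma \ref{convergence lemma} on a dense subspace such as $\D(\log(A)^{2})$. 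Once these two points are in place, everything else is a routine transcription of \cite[Theorem 12.2]{Kunstmann-Weis04} with $X$ replaced by the pair $(\D_{\log(A)}(\tfrac{1}{p}-\tfrac{1}{q},1),X)$.
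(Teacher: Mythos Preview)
Your proposal is correct and follows exactly the approach the paper takes: the paper's entire proof is the single sentence that the corollary is obtained from Theorem \ref{BIP theorem} ``in the same manner as in \cite[Theorem 12.2]{Kunstmann-Weis04}'', and your expansion of what that reference entails (uniform $\HT^{\infty}$-bound on the sign-sums via the decay of $f\in\HT^{\infty}_{0}$, application of Theorem \ref{BIP theorem}, averaging over signs for the Rademacher estimate, and dualization) is precisely that argument carried out with the pair $(\D_{\log(A)}(\tfrac{1}{p}-\tfrac{1}{q},1),X)$ in place of $(X,X)$. The only superfluous step is the appeal to $A^{*}\in\mathrm{BIP}(X^{*})$ for the dual inequality---it suffices to take adjoints of the uniformly bounded operators $\sum_{k}\epsilon_{k}f(2^{k}tA)\in\La(\D_{\log(A)}(\tfrac{1}{p}-\tfrac{1}{q},1),X)$ directly---but this does not affect correctness.
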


\subsection{Cosine functions}\label{cosine functions}

For $\w\geq 0$ let $\varPi_{\w}:=\{z^{2}\mid z\in \St_{\w}\}$. An operator $A$ on a Banach space $X$ is of \emph{parabola-type} $\w$ if $\sigma(A)\subseteq \overline{\varPi_{\w}}$ and if for all $\w'>\w$ there exists a $M_{\w'}\geq 0$ such that
\begin{align*}
\norm{R(\lambda,A)}\leq \frac{M_{\w'}}{\sqrt{\abs{\lambda}}\left(\abs{\Imag(\sqrt{\lambda})}-\w'\right)}\qquad(\lambda\in\C\setminus \varPi_{\w'}).
\end{align*}
For operators of parabola-type $\w\geq0$ there is a natural functional calculus, constructed similarly as the strip-type and sectorial functional calculi, and $f(A)$ is defined as an unbounded operator for all $f\in\HT^{\infty}\!(\varPi_{\w'})$, $\w'>\w$. For details see \cite{Haase13}.

A \emph{cosine function} $\mathrm{Cos}:\R\rightarrow \La(X)$ on a Banach space $X$ is a strongly continuous mapping such that $\mathrm{Cos}(0)=I$ and
\begin{align*}
\mathrm{Cos}(t+s)+\mathrm{Cos}(t-s)=2\mathrm{Cos}(t)\mathrm{Cos}(s)\qquad(s,t\in\R).
\end{align*}
Then
\begin{align*}
\theta(\mathrm{Cos}):=\inf\{\w\geq 0\mid\exists M\geq 0:\norm{\mathrm{Cos}(t)}\leq M\ue^{\w|t|}\textrm{ for all }t\in\R\}<\infty.
\end{align*}
The \emph{generator} of a cosine function is the unique operator $-A$ on $X$ that satisfies
\begin{align*}
\lambda R(\lambda^{2},-A)=\int_{0}^{\infty}\ue^{-\lambda t}\mathrm{Cos}(t)\,\ud t \qquad (\lambda>\theta(\mathrm{Cos})).
\end{align*}
Then $A$ is an operator of parabola-type $\theta(\mathrm{Cos})$. 

We now prove a version of Theorem \ref{main theorem} for generators of cosine functions.

\begin{theorem}\label{cosine function result}
Let $-A$ generate a cosine function $(\mathrm{Cos}(s))_{s\in\R}\subseteq\La(X)$ on a Banach space $X$ with type $p\in[1,2]$ and cotype $q\in[2,\infty)$. Let $\w>\theta(\mathrm{Cos})$. Then there exists a constant $C\geq 0$ such that $\D_{A}(\tfrac{1}{2}(\tfrac{1}{p}-\tfrac{1}{q}),1)\subseteq\D(f(A))$ and
\begin{align*}
\norm{f(A)}_{\La(\D_{A}(\frac{1}{2}(\frac{1}{p}-\frac{1}{q}),1),X)}\leq C\norm{f}_{\HT^{\infty}\!(\varPi_{\w})}
\end{align*}
for all $f\in\HT^{\infty}\!(\varPi_{\w})$. 
\end{theorem}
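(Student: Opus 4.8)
The plan is to transfer the problem about the parabola-type operator $A$ to the strip-type setting, where Theorem \ref{main theorem} already applies. The key observation is that if $-A$ generates a cosine function $(\mathrm{Cos}(s))_{s\in\R}$ on $X$, then $A$ admits a ``square root'' $B$ in an appropriate sense: there is an operator $B$ with $B^{2}=A$ such that $-\ui B$ generates a $C_{0}$-group, namely the group associated to the cosine function on a suitable phase space, or more directly so that $f(A) = (f(z^{2}))(B)$ for bounded holomorphic $f$ on $\varPi_{\w}$. Concretely, $g \mapsto g\circ(z\mapsto z^{2})$ is an isometric isomorphism $\HT^{\infty}\!(\varPi_{\w}) \to \HT^{\infty}_{\mathrm{even}}\!(\St_{\w})$ onto the even bounded holomorphic functions on the strip, and the composition rule for the parabola-type calculus (from \cite{Haase13}) should give $f(A) = (f(z^{2}))(B)$. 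So first I would set up this square root $B$ and record the composition rule, checking that $-\ui B$ generates a $C_{0}$-group with $\theta(\text{group}) = \theta(\mathrm{Cos})$, or at worst $\leq \w$ after shrinking.

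Next, I would apply Theorem \ref{main theorem} to $B$: since $X$ has type $p$ and cotype $q$, there is a constant $C\geq 0$ with $\norm{(f(z^{2}))(B)}_{\La(\D_{B}(\frac{1}{p}-\frac{1}{q},1),X)} \leq C\norm{f(z^{2})}_{\HT^{\infty}\!(\St_{\w})} = C\norm{f}_{\HT^{\infty}\!(\varPi_{\w})}$, using that the composition is an isometry on sup-norms. It then remains to identify the interpolation space $\D_{B}(\frac{1}{p}-\frac{1}{q},1)$ in terms of $A$. Here I expect the relation $\D_{A}(\theta,1) = \D_{B^{2}}(\theta,1) = \D_{B}(2\theta,1)$ to hold with equivalent norms (for $2\theta \in (0,1)$), which follows from the standard fact that $\D(B^{2}) \hookrightarrow \D(B) \hookrightarrow X$ and reiteration: $\D_{B}(2\theta,1) = (X,\D(B^{2}))_{\theta,1} = (X, \D(A))_{\theta,1} = \D_{A}(\theta,1)$, using $\D(A) = \D(B^{2})$ with equivalent norms (which holds since $A = B^{2}$, with $B$ sectorial/strip-type so that $\D(B^{2})$ is a genuine domain). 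Taking $\theta = \frac{1}{2}(\frac{1}{p}-\frac{1}{q})$, so that $2\theta = \frac{1}{p}-\frac{1}{q}$, gives exactly $\D_{A}(\frac{1}{2}(\frac{1}{p}-\frac{1}{q}),1) = \D_{B}(\frac{1}{p}-\frac{1}{q},1)$, and the theorem follows.

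The main obstacle I anticipate is making the ``square root'' correspondence precise: one needs that a parabola-type operator $A$ of height $\w$ arising as the generator of a cosine function really does factor as $B^{2}$ for a strip-type operator $B$ of height $\w$ with $-\ui B$ generating a $C_{0}$-group, together with the functional calculus composition rule $f(A) = (f \circ \mathrm{sq})(B)$ respecting the regularization procedures on both sides. This is where one must invoke the structure theory of cosine functions --- the associated $C_{0}$-group on the phase space $X \times V$ (with $V$ the Kisy\'nski space), or the results in \cite{Haase13} relating the parabola-type calculus to a strip-type calculus --- and verify that the generator group has the same exponential type $\theta(\mathrm{Cos})$ as the cosine function. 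Once that dictionary is in place, the type/cotype input is entirely imported from Theorem \ref{main theorem} and the rest is interpolation-space bookkeeping via the Reiteration Theorem, as in the proof of Proposition \ref{other interpolation spaces}.
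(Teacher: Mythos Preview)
Your approach is essentially the paper's: pass to a strip-type operator via the phase-space group, apply Theorem~\ref{main theorem}, and identify the interpolation space by reiteration. One point needs sharpening, though. A square root $B$ on $X$ itself with $-\ui B$ generating a $C_{0}$-group need not exist on a general Banach space (that is Fattorini's theorem, which requires UMD); the correct object is the matrix operator $\mathcal{A}$ on the Kisy\'nski phase space $V\times X$, with $\mathcal{A}^{2}=A_{V}\oplus A$ and $g(\mathcal{A})=f(A_{V})\oplus f(A)$ for $g(z)=f(z^{2})$. Consequently your interpolation step ``$\D_{B}(2\theta,1)=(X,\D(B^{2}))_{\theta,1}=\D_{A}(\theta,1)$'' must be carried out on the product space: one shows $\D_{\mathcal{A}}(\theta,1)=\D_{A}(\tfrac{1+\theta}{2},1)\times\D_{A}(\tfrac{\theta}{2},1)$ using that $V$ lies in the $K_{1/2}\cap J_{1/2}$ class between $X$ and $\D(A)$, and then projects onto the second factor. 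Once you make that adjustment, your argument coincides with the paper's.
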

\begin{proof}
The proof follows the same lines as that of \cite[Proposition 5.5]{Haase-Rozendaal16}. It suffices to assume that $\theta:=\frac{1}{p}-\frac{1}{q}\in(0,1)$. By \cite[Theorem 2]{Kisynski72} there is a unique subspace $V\subseteq X$ such that $\D(A)\subseteq V$ and such that $-\ui\mathcal{A}$ generates a $C_{0}$-group $(U(s))_{s\in\R}\subseteq\La(V\times X)$ on $V\times X$, where
\begin{align*}
\mathcal{A}:=\ui\left[\begin{array}{cc}0&\mathrm{I}_{V}\\-A&0\end{array}\right]
\end{align*}
with domain $\D(\mathcal{A}):=\D(A)\times V$. Moreover, by \cite[Theorem 6.2]{Haase07}, $\theta(\mathrm{Cos})=\theta(U)$. Hence Theorem \ref{main theorem} yields a constant $C\geq 0$ such that $g(\mathcal{A})\in\La(\D_{\mathcal{A}}(\theta,1),V\times X)$ with
\begin{align}\label{estimate g}
\norm{g(\mathcal{A})}_{\La(\D_{\mathcal{A}}(\theta,1),V\times X)}\leq C\norm{g}_{\HT^{\infty}\!(\St_{\w})}
\end{align}
for all $g\in\HT^{\infty}\!(\St_{\w})$.

Let $f\in \HT^{\infty}\!(\varPi_{\w})$. Then $[z\mapsto g(z):=f(z^{2})]\in \HT^{\infty}\!(\St_{\w})$ and $\norm{g}_{\HT^{\infty}\!(\St_{\w})}=\norm{f}_{\HT^{\infty}\!(\varPi_{\w})}$. Moreover, it is straightforward to see that 
\begin{align}\label{direct sum g}
f(A_{V})\oplus f(A)=g(\mathcal{A}).
\end{align}
Now,
\begin{align*}
\mathcal{A}^{2}:=\left[\begin{array}{cc}A_{V}&0\\0&A\end{array}\right]
\end{align*}
with $\D(\mathcal{A}^{2})=\D(A_{V})\times \D(A)$. By \cite[Proposition 3.1.4]{Lunardi09}, 
\begin{align*}
\D(A)\times V\in K_{1/2}\!\left(V\times X,\D(A_{V})\times \D(A)\right)\cap J_{1/2}\!\left(V\times X,\D(A_{V})\times \D(A)\right),
\end{align*}
where $K_{1/2}$ and $J_{1/2}$ are as in \cite[Definition 1.3.1]{Lunardi09}. Hence
\begin{align*}
V\in K_{1/2}\!\left(X,\D(A)\right)\cap J_{1/2}\!\left(X,\D(A)\right).
\end{align*}
Now \cite[Theorem 1.3.5]{Lunardi09} yields 
\begin{align*}
\D_{\mathcal{A}}(\theta,1)&=(V\times X,\D(A)\times V)_{\theta,1}=(V,\D(A))_{\theta,1}\times (X,V)_{\theta,1}\\
&=\D_{A}\left(\tfrac{1+\theta}{2},1\right)\times \D_{A}\left(\tfrac{\theta}{2},1\right).
\end{align*}
Combining this with \eqref{estimate g} and \eqref{direct sum g} yields $f(A)\in\La\left(\D_{A}\left(\tfrac{\theta}{2},1\right),X\right)$ with
\begin{align*}
\norm{f(A)}_{\La\left(\D_{A}\left(\frac{\theta}{2},1\right),X\right)}\leq \norm{g(\mathcal{A})}_{\La(\D_{\mathcal{A}}(\theta,1),V\times X)}\leq C\norm{g}_{\HT^{\infty}\!(\St_{\w})}=C\norm{f}_{\HT^{\infty}\!(\varPi_{\w})},
\end{align*}
as required.
\end{proof}

From Proposition \ref{other interpolation spaces} one deduces in a similar manner that, under the assumptions of Proposition \ref{cosine function result} and for all $r\in(0,1-\tfrac{1}{2}(\tfrac{1}{p}-\tfrac{1}{q}))$ and $u\in[1,\infty]$, there exists a constant $C\geq 0$ such that 
\begin{align*}
\norm{f(A)x}_{r,u}\leq C\norm{f}_{\HT^{\infty}\!(\varPi_{\w})}\norm{x}_{r+\frac{1}{2}(\frac{1}{p}-\frac{1}{q}),u}
\end{align*}
for all $f\in\HT^{\infty}\!(\varPi_{\w})$ and $x\in\D_{A}(r+\tfrac{1}{2}(\tfrac{1}{p}-\tfrac{1}{q}),u)$. We leave the formulation of the obvious analogue of Proposition \ref{functions with decay} for cosine functions to the reader.

\begin{remark}\label{cosine functions on lattices}
Let $-A$ generate a cosine function $(\mathrm{Cos}(s))_{s\in\R}\subseteq\La(X)$, where $X$ is isomorphic to a complemented subspace of a $p$-convex and $q$-concave UMD Banach lattice, for $p\in(1,2]$ and $q\in[2,\infty)$. Let $\lambda>\w> \theta(\mathrm{Cos})$. Then there exists a constant $C\geq 0$ such that
\begin{align*}
\norm{f(A)x}_{X}\leq C\norm{f}_{\HT^{\infty}\!(\varPi_{\w})}\|(\lambda^{2}+A)^{\frac{1}{2}(\frac{1}{p}-\frac{1}{q})}x\|_{X}
\end{align*}
for all $f\in\HT^{\infty}\!(\varPi_{\w})$ and $x\in \D((\lambda^{2}+A)^{\frac{1}{2}(\frac{1}{p}-\frac{1}{q})})$. This follows from Theorem \ref{second main theorem} as in the proof of Proposition \ref{cosine function result}, using the complex interpolation method and \cite[Theorem 6.6.9]{Haase06a} and \cite[Theorem 5.5]{Haase09}.

It should be noted that generators of cosine functions on UMD spaces have a bounded sectorial $\HT^{\infty}$-calculus, by \cite[Theorem 5.5]{Haase09}. Hence on UMD spaces Theorem \ref{cosine function result} is only of use when it does not suffice to obtain an estimate for $\norm{f(A)x}_{X}$ with respect to the supremum norm of $f$ on a sector. The latter is e.g.~the case if $f(z)=g(z^{2})$ for a $g\in\HT^{\infty}(\St_{\w})$ which is unbounded on any double sector $\Se_{\psi}\cup -\Se_{\psi}$ for $\psi\in(0,\tfrac{\pi}{2})$, such as $g(z)=\ue^{-\ui z}$.
\end{remark}

\vanish{By Remark \ref{cosine functions on lattices} the case $\alpha=\frac{1}{2}(\frac{1}{p}-\frac{1}{q})$ is attained on complemented subspaces of $p$-convex and $q$-concave UMD Banach lattices.

\begin{corollary}\label{functions with decay cosine}
Let $-A$ generate a cosine function $(\mathrm{Cos}(s))_{s\in\R}\subseteq\La(X)$ on a Banach space $X$ with type $p\in[1,2]$ and cotype $q\in[2,\infty)$. Let $\alpha>\tfrac{1}{2}(\tfrac{1}{p}-\tfrac{1}{q})$ and $\lambda>\w>\theta(U)$. Then there exists a constant $C\geq 0$ such that $f(A)\in\La(X)$ with 
\begin{align}
\norm{f(A)}_{\La(X)}\leq C\!\sup_{z\in\varPi_{\w}}\abs{\lambda^{2}+z}^{\alpha}\abs{f(z)}
\end{align}
for each $f\in\HT^{\infty}\!(\varPi_{\w})$ such that $f(z)\in O(\abs{z}^{-\alpha})$ as $\abs{z}\to\infty$.
\end{corollary}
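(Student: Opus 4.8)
The plan is to deduce Corollary \ref{functions with decay cosine} from Proposition \ref{cosine function result} in exactly the same way that Corollary \ref{functions with decay} was deduced from Theorem \ref{main theorem} via Proposition \ref{resolvent operators}. First I would record the parabola-type analogue of Proposition \ref{resolvent operators}: for $\lambda>\w>\theta(\mathrm{Cos})$ and $\alpha\in\C$ with $\Real(\alpha)>\frac{1}{2}(\frac{1}{p}-\frac{1}{q})$, one has $\D((\lambda^{2}+A)^{\alpha})\subseteq\D(f(A))$ and $\norm{f(A)(\lambda^{2}+A)^{-\alpha}}_{\La(X)}\leq C\norm{f}_{\HT^{\infty}(\varPi_{\w})}$. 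This follows from Proposition \ref{cosine function result} together with the inclusion $\D((\lambda^{2}+A)^{\alpha})\subseteq\D_{A}(\frac{1}{2}(\frac{1}{p}-\frac{1}{q}),1)$ continuously, which holds because $\D((\lambda^{2}+A)^{\alpha})\hookrightarrow \D((\lambda^{2}+A)^{\beta})\hookrightarrow\D_{A}(\beta,1)$ for any $\beta$ with $\frac{1}{2}(\frac{1}{p}-\frac{1}{q})<\Real(\beta)<\Real(\alpha)$, by the analogues of \cite[Propositions 1.1.4 and 4.1.7]{Lunardi09} for operators of parabola-type; the case $p=q=2$ being covered directly as in Proposition \ref{resolvent operators}.

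Granting that intermediate statement, the corollary is immediate: given $f\in\HT^{\infty}(\varPi_{\w})$ with $f(z)\in O(\abs{z}^{-\alpha})$ as $\abs{z}\to\infty$ and $\alpha>\frac{1}{2}(\frac{1}{p}-\frac{1}{q})$, apply it to the function $g(z):=(\lambda^{2}+z)^{\alpha}f(z)$. The decay hypothesis on $f$ together with $\abs{\lambda^{2}+z}^{\alpha}=O(\abs{z}^{\alpha})$ shows that $g$ is bounded on $\varPi_{\w}$, with $\norm{g}_{\HT^{\infty}(\varPi_{\w})}\leq \sup_{z\in\varPi_{\w}}\abs{\lambda^{2}+z}^{\alpha}\abs{f(z)}$ (one should check the supremum is finite; near the spectrum $(\lambda^{2}+z)^{\alpha}$ is bounded since $\lambda^{2}$ is to the left of $\overline{\varPi_{\w}}$, and at infinity the product decays). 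Since $f(z)=(\lambda^{2}+z)^{-\alpha}g(z)$, the multiplicativity of the parabola-type calculus gives $f(A)=g(A)(\lambda^{2}+A)^{-\alpha}\in\La(X)$, and the estimate $\norm{f(A)}_{\La(X)}\leq C\norm{g}_{\HT^{\infty}(\varPi_{\w})}\leq C\sup_{z\in\varPi_{\w}}\abs{\lambda^{2}+z}^{\alpha}\abs{f(z)}$ follows.

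The main obstacle is largely bookkeeping rather than anything deep: one must confirm that fractional powers $(\lambda^{2}+A)^{\alpha}$ of the (shifted) parabola-type operator behave as expected, namely that $(\lambda^{2}+A)$ is sectorial of angle $<\pi$ (so that fractional powers and the composition rule $f(A)(\lambda^{2}+A)^{-\alpha}=[(\lambda^{2}+\cdot)^{-\alpha}f](A)$ are available from \cite[Chapter 3]{Haase06a}), and that the embedding $\D((\lambda^{2}+A)^{\beta})\hookrightarrow\D_{A}(\beta,1)$ holds. Since $-A$ generates a cosine function and $\lambda>\theta(\mathrm{Cos})$, the operator $\lambda^{2}+A$ is invertible and sectorial of angle $0$ (its spectrum lies in a parabola opening rightward, shifted into $\C_{+}$), so these facts are standard; I would cite the relevant results in \cite{Haase06a,Haase13,Lunardi09} and omit the routine verifications. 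For $q=\infty$ the statement is vacuous since $\D_{A}(0,1)=\{0\}$ is not the point — rather $\frac{1}{p}-\frac{1}{q}>0$ is needed for the hypothesis $\alpha>\frac{1}{2}(\frac{1}{p}-\frac{1}{q})$ to be nontrivial, but the argument goes through regardless.
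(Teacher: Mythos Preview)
Your approach is correct and is exactly what the paper intends: it states that Corollary \ref{functions with decay cosine} is the ``analogue of Corollary \ref{functions with decay}'' obtained from Theorem \ref{cosine function result}, and your proposal carries out precisely that analogy (parabola-type version of Proposition \ref{resolvent operators}, then apply it to $(\lambda^{2}+\cdot)^{\alpha}f$). One minor correction: $\lambda^{2}+A$ is sectorial of some angle strictly less than $\pi/2$ (in fact, $-A$ generates an analytic semigroup of angle $\pi/2$), not of angle $0$, since the shifted parabola region contains points off the real axis; but this does not affect the argument since angle $<\pi$ is all that is needed.
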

}

\section{Operator-valued functional calculus}\label{operator-valued calculus}

In this section we extend our main functional calculus theorems to $R$-bounded operator-valued calculi. Since many of the ideas and proofs are similar to those in the sections before, we leave some details to the reader.

Let $A$ be a strip-type operator of height $\w_{0}\geq 0$ on a Banach space $X$. Let $\A\subseteq\La(X)$ be the algebra of bounded operators that commute with $A$, and let $\w>\w_{0}$. For a bounded holomorphic $f:\St_{\w}\to\A$ such that $\norm{f(z)}_{\La(X)}\leq C\abs{z}^{-\alpha}$ for all $z\in\St_{\w}$ and certain $C\geq 0$ and $\alpha>1$, define $f(A)$ as in \eqref{Cauchy integral}. Regularization as in \eqref{regularization} extends this functional calculus to all bounded holomorphic $f:\St_{\w}\to\A$.

For $\w\geq 0$ let $\M_{\w}(\R;\A)$ consist of the $\A$-valued Borel measures $\mu$ on $\R$ such that $\ue^{\w \abs{s}}\mu(\ud s)$ has bounded variation. If $-\ui A$ generates a $C_{0}$-group $(U(s))_{s\in\R}\subseteq\La(X)$ then one can define $U_{\mu}\in\La(X)$ as in \eqref{Hille-Phillips calculus} for all $\w>\theta(U)$ and $\mu\in\M_{\w}(\R;\A)$. Versions of Lemmas \ref{decay implies fourier transform} and \ref{convergence lemma} hold for this operator-valued calculus, with the same proofs.

For $\w>0$ and $\mathcal{T}\subseteq\La(X)$ let $R\HT^{\infty}\!(\St_{\w};\mathcal{T})$ be the collection of bounded holomorphic functions $f:\St_{\w}\to\mathcal{T}$ such that $\{f(z)\mid z\in\St_{\w}\}\subseteq\La(X)$ is $R$-bounded. If $\mathcal{T}$ is an algebra then $R\HT^{\infty}\!(\St_{\w};\mathcal{T})$ is a Banach algebra with the norm
\begin{align*}
\norm{f}_{R\HT^{\infty}\!(\St_{\w};\mathcal{T})}:=R_{X}(\{f(z)\mid z\in\St_{\w}\})\qquad(f\in R\HT^{\infty}\!(\St_{\w};\mathcal{T})).
\end{align*}
The following result generalizes Theorem \ref{main theorem}, since each $f\in\HT^{\infty}\!(\St_{\w})$ defines an element $\tilde{f}\in R\HT^{\infty}\!(\St_{\w};\A)$ by $\widetilde{f}(z)=f(z)I_{X}$ for $z\in\St_{\w}$, and $\|\widetilde{f}\|_{R\HT^{\infty}\!(\St_{\w};\A)}=\|f\|_{\HT^{\infty}\!(\St_{\w})}$.

\begin{proposition}\label{main operator-valued proposition}
Let $-\ui A$ generate a $C_{0}$-group $(U(s))_{s\in\R}\subseteq\La(X)$ on a Banach space $X$ with type $p\in[1,2]$ and cotype $q\in[2,\infty)$, and let $\w>\theta(U)$. Then there exists a constant $C\geq 0$ such that $\DAPQBIG\subseteq\D(f(A))$ and
\begin{align*}
\norm{f(A)}_{\La(\DAPQ,X)}\leq C\norm{f}_{R\HT^{\infty}\!(\St_{\w};\A)}
\end{align*}
for all $f\in R\HT^{\infty}\!(\St_{\w};\A)$.
\end{proposition}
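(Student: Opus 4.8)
The plan is to transcribe the proof of Theorem \ref{main theorem}, replacing each scalar ingredient by its $\A$-valued analogue. The key preliminary step is an operator-valued version of Proposition \ref{transference principle groups interpolation result}: under the same hypotheses on $X$, but for $\mu\in\M_{\w}(\R;\A)$, one has
\begin{align*}
\norm{U_{\mu}x}_{X}\leq C\tau_{p,X}c_{q,X}M^{2}\,R_{X}(\{\F\mu_{\w}(s)\mid s\in\R\})\,\norm{x}_{1/p-1/q,1}
\end{align*}
for all $x\in\DAPQBIG$. To prove this I would use exactly the maps $\iota$ and $P$ from \eqref{define iota} and \eqref{define P}: these are built only from the scalar weights $\psi,\ph$ and the group $U$, so the estimates \eqref{norm iota on Lp}, \eqref{norm P} and the interpolation argument leading to \eqref{norm iota} carry over verbatim. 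Since every operator in $\A$ commutes with $A$ and hence with each $U(s)$, the abstract transference principle of \cite{Haase11} still produces the factorization $U_{\mu}=P\circ L_{\mu_{\w}}\circ\iota$, where now $L_{\mu_{\w}}$ is convolution with the $\A$-valued measure $\mu_{\w}$, that is, the Fourier multiplier operator $T_{\F\mu_{\w}}$ with symbol $\F\mu_{\w}\in\Ellinfty(\R;\A)$. The only genuine change from Proposition \ref{transference principle groups interpolation result} is that one estimates $L_{\mu_{\w}}$ on $\Be^{1/p-1/q}_{p,1}(\R;X)$ by invoking Proposition \ref{Fourier multipliers under type and cotype} directly (with $r=0$, $s=1$, and the contractive embedding $\Be^{0}_{q,1}(\R;X)\hookrightarrow\Ellq(\R;X)$) instead of its corollary; this requires the range $\{\F\mu_{\w}(s)\mid s\in\R\}$ to be $R$-bounded.

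That $R$-boundedness, and the identification of its $R$-bound with the relevant norm, is where $R\HT^{\infty}\!(\St_{\w};\A)$ enters. If $f\in R\HT^{\infty}\!(\St_{\w};\A)$ and $f=\F\mu_{\w}$, then by \eqref{Fourier transform perturbed measure} each $\F\mu_{\w}(s)=\tfrac12\big(f(s+\ui\w)+f(s-\ui\w)\big)$ is an average of two values of $f$ on $\St_{\w}$, and $R$-bounds are stable under such averaging (an elementary consequence of the triangle inequality in $\Ell^{2}(\Omega;X)$, or of the fact that the absolutely convex hull of an $R$-bounded set has the same $R$-bound). Hence $R_{X}(\{\F\mu_{\w}(s)\mid s\in\R\})\leq R_{X}(\{f(z)\mid z\in\St_{\w}\})=\norm{f}_{R\HT^{\infty}\!(\St_{\w};\A)}$, which is the estimate that in Corollary \ref{Fourier multiplier measure} came from the Kahane contraction principle.

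With the operator-valued transference principle available, the proof follows the two stages of the proof of Theorem \ref{main theorem}. First, for $f$ in the operator-valued analogue of $\E(\St_{\w})$ --- bounded holomorphic $f:\St_{\w}\to\A$ with $\norm{f(z)}_{\La(X)}\in O(\abs{z}^{-\alpha})$ as $\abs{\Real(z)}\to\infty$ for some $\alpha>1$ --- and $\alpha'\in(\theta(U),\w)$, the operator-valued version of Lemma \ref{decay implies fourier transform} gives $\mu\in\M_{\alpha'}(\R;\A)$ with $f=\F\mu_{\alpha'}$ and $f(A)=U_{\mu}$; combining the transference estimate with $R_{X}(\{\F\mu_{\alpha'}(s)\})\leq\norm{f}_{R\HT^{\infty}\!(\St_{\w};\A)}$ establishes the desired bound on this class (as in Theorem \ref{main theorem}, here one uses that $\norm{U(s)}\leq M'\cosh(\alpha' s)$ for a suitable $M'$). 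For general $f\in R\HT^{\infty}\!(\St_{\w};\A)$ one multiplies by the scalar regularizers $\tau_{k}(z)=-k^{2}(\ui k-z)^{-2}$: then $f\tau_{k}$ lies in the previous class, $\sup_{k}\norm{f\tau_{k}}_{R\HT^{\infty}\!(\St_{\w};\A)}\leq\norm{f}_{R\HT^{\infty}\!(\St_{\w};\A)}\sup_{k}\norm{\tau_{k}}_{\HT^{\infty}\!(\St_{\w})}<\infty$, and $f\tau_{k}(z)\to f(z)$ pointwise on $\St_{\w}$, so the operator-valued version of the Convergence Lemma \ref{convergence lemma} yields $f(A)\in\La(\DAPQBIG,X)$ with the stated estimate, using that $\D(A^{2})\subseteq\DAPQBIG$ is dense and $\limsup_{k}\norm{\tau_{k}}_{\HT^{\infty}\!(\St_{\w})}=1$.

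I expect the main points requiring care --- rather than a serious obstacle --- to be the bookkeeping around the operator-valued objects: checking that the abstract transference principle of \cite{Haase11} is insensitive to replacing scalar measures by $\A$-valued ones (it is, because $\A$ commutes with $U$), that $L_{\mu_{\w}}=T_{\F\mu_{\w}}$ with symbol in $\Ellinfty(\R;\A)$ so that Proposition \ref{Fourier multipliers under type and cotype} applies, and the $R$-boundedness estimate of the symbol described above. The operator-valued versions of Lemma \ref{decay implies fourier transform} and of the Convergence Lemma \ref{convergence lemma}, which hold for the operator-valued calculus as noted at the start of Section \ref{operator-valued calculus}, are used as black boxes; granting these, everything else is a transcription of Section \ref{transference principles} and of the proof of Theorem \ref{main theorem}.
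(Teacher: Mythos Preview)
Your proposal is correct and follows essentially the same route as the paper's own proof: extend Proposition \ref{transference principle groups interpolation result} to $\A$-valued measures by noting that the abstract transference factorization of \cite{Haase11} persists (since $\A$ commutes with $U$), invoke Proposition \ref{Fourier multipliers under type and cotype} in place of Corollary \ref{Fourier multiplier measure}, and then rerun the two-stage argument of Theorem \ref{main theorem}. You supply more detail than the paper does --- in particular the explicit observation that $\F\mu_{\alpha'}(s)=\tfrac12\big(f(s+\ui\alpha')+f(s-\ui\alpha')\big)$ lies in the convex hull of $\{f(z)\mid z\in\St_{\w}\}$ and hence inherits its $R$-bound --- but this is exactly the mechanism the paper has in mind.
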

\begin{proof}
First extend Proposition \ref{transference principle groups interpolation result} to all $\mu\in\M_{\w}(\R;\A)$ for which $R_{X}(\{\F\mu_{\w}(s)\mid s\in\R\})<\infty$. To this end, note that the abstract transference principle in \cite[Section 2]{Haase11} extends to $\A$-valued measures, and appeal to Proposition \ref{Fourier multipliers under type and cotype} instead of Corollary \ref{Fourier multiplier measure}. Then proceed as in the proof of Theorem \ref{main theorem}.
\end{proof}

The other results in the previous sections can be extended to statements about operator-valued calculi in a similar manner. In particular, if in Proposition \ref{main operator-valued proposition} $X$ is isomorphic to a complemented subspace of a $p$-convex and $q$-concave Banach lattice, then for each $\lambda>\w$ there exists a constant $C\geq 0$ such that $\D((\lambda+\ui A)^{\frac{1}{p}-\frac{1}{q}})\subseteq\D(f(A))$ and
\begin{align}\label{operator-valued fractional inequality}
\norm{f(A)}_{\La(\D((\lambda+\ui A)^{\frac{1}{p}-\frac{1}{q}}),X)}\leq C\norm{f}_{R\HT^{\infty}\!(\St_{\w};\A)}
\end{align}
for all $f\in R\HT^{\infty}\!(\St_{\w};\A)$. Remark \ref{constant} applies to Proposition \ref{main operator-valued proposition} and \eqref{operator-valued fractional inequality}.

Let $(r_{k})_{k\in\N}$ and $(r_{j}')_{j\in\N}$ be mutually independent Rademacher sequences on a probability space $(\Omega,\mathbb{P})$. We say that a Banach space $X$ has property $(\alpha)$ if there exists a constant $C\geq 0$ such that, for all $m\in\N$, $\{x_{j,k}\}_{j,k=1}^{m}\subseteq X$ and $\{\alpha_{j,k}\}_{j,k=1}^{m}\subseteq \C$ with $\abs{\alpha_{j,k}}\leq 1$ for all $j,k\in\{1,\ldots,m\}$,
\begin{equation}\label{eq:property alpha}
\Big(\mathbb{E}\mathbb{E}\Big\|\sum_{j,k=1}^{m}r_{k}r_{j}'\alpha_{j,k}x_{j,k}\Big\|^{2}\Big)^{1/2}\leq C\Big(\mathbb{E}\mathbb{E}\Big\|\sum_{j,k=1}^{m}r_{k}r_{j}'x_{j,k}\Big\|^{2}\Big)^{1/2}.
\end{equation}
Each Banach lattice with finite cotype has property $(\alpha)$, and if $X$ has property $(\alpha)$ then so do the closed subspaces of $X$ and $\Ell^{p}(\Omega;X)$ for each $\sigma$-finite measure space $(\Omega,\mu)$ and each $p\in[1,\infty)$.

For $\mathcal{T}$ equal to the unit ball of $\C\subseteq\La(X)$, the following theorem implies that the $\HT^{\infty}\!(\St_{\w})$-calculus for $A$ is $R$-bounded from $\D_{A}(\frac{1}{p}-\frac{1}{q},1)$ to $X$. That is, $\{f(A)\mid \norm{f}_{\HT^{\infty}\!(\St_{\w})}\leq 1\}\subseteq\La(\D_{A}(\frac{1}{p}-\frac{1}{q},1),X)$ is $R$-bounded.

\begin{theorem}\label{R-bounded calculus}
Let $-\ui A$ generate a $C_{0}$-group $(U(s))_{s\in\R}\subseteq\La(X)$ on a Banach space $X$ with property $(\alpha)$, type $p\in[1,2]$ and cotype $q\in[2,\infty)$. Let $\w>\theta(U)$. Then there exists a constant $C\geq 0$ such that
\begin{align*}
R_{\D_{A}(\frac{1}{p}-\frac{1}{q},1),X}(\{f(A)\mid f\in R\HT^{\infty}\!(\St_{\w};\A\cap \mathcal{T})\})\leq C R_{X}(\mathcal{T})
\end{align*}
for each $R$-bounded $\mathcal{T}\subseteq\La(X)$.
\end{theorem}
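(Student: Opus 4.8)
The plan is to reduce this $R$-boundedness statement to the norm estimate already obtained in Proposition \ref{main operator-valued proposition}, by exploiting property $(\alpha)$ to pass from a single bound on $\La(\DAPQ,X)$ to an $R$-bound over families of symbols. The key observation is that property $(\alpha)$ allows one to ``randomize'' the transference argument: one replaces the scalar-valued Besov-space estimate by a version with an extra Rademacher average, at the cost of only the $(\alpha)$-constant. Concretely, I would first record that if $X$ has property $(\alpha)$, type $p$ and cotype $q$, then $\Ell^{2}(\Omega;X)$ (or rather the relevant Rademacher subspace) again has type $p$ and cotype $q$, so Proposition \ref{Fourier multipliers under type and cotype} and hence Corollary \ref{Fourier multiplier measure} apply on this larger space as well. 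Property $(\alpha)$ is precisely what is needed to identify $\Be^{1/p-1/q}_{p,1}(\R;\Ell^{2}(\Omega;X))$-type norms with Rademacher averages of $\Be^{1/p-1/q}_{p,1}(\R;X)$-norms up to a constant, and similarly for $R$-bounds of the multiplier symbols.

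The main steps, in order, are as follows. First, fix an $R$-bounded $\mathcal{T}\subseteq\La(X)$ and finitely many $f_{1},\dots,f_{n}\in R\HT^{\infty}(\St_{\w};\A\cap\mathcal{T})$ together with $x_{1},\dots,x_{n}\in\DAPQ$; the goal is to bound $(\mathbb{E}\|\sum_{k}r_{k}f_{k}(A)x_{k}\|_{X}^{2})^{1/2}$ by $C R_{X}(\mathcal{T})(\mathbb{E}\|\sum_{k}r_{k}x_{k}\|_{\DAPQ}^{2})^{1/2}$. Second, for $f_{k}\in\E(\St_{\w})$ use the operator-valued Lemma \ref{decay implies fourier transform} (mentioned after the definition of the operator-valued calculus) to write $f_{k}=\F(\mu_{k})_{\alpha}$ and $f_{k}(A)=U_{\mu_{k}}$ for suitable $\A$-valued measures $\mu_{k}\in\M_{\alpha}(\R;\A)$ whose symbols $\F(\mu_{k})_{\w}$ take values in $\mathcal{T}$. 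Third, run the transference factorization $U_{\mu_{k}}=P\circ L_{(\mu_{k})_{\w}}\circ\iota$ from Proposition \ref{transference principle groups interpolation result} simultaneously for all $k$: apply $\iota$ to each $x_{k}$, form the Rademacher sum inside $\Ell^{2}(\Omega;\Be^{1/p-1/q}_{p,1}(\R;X))$, apply the diagonal convolution operator, and then $P$. The convolution step is where the work concentrates: one needs an $R$-bounded (in $\mathcal{T}$) family version of Corollary \ref{Fourier multiplier measure}, i.e.\ that $\{L_{\nu}\mid \F\nu$ takes values in $\mathcal{T}\}$ acting $\Be^{1/p-1/q}_{p,1}(\R;X)\to\Ell^{q}(\R;X)$ is $R$-bounded with constant $\lesssim\tau_{p,X}c_{q,X}R_{X}(\mathcal{T})$. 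This is exactly Proposition \ref{Fourier multipliers under type and cotype} applied with the $R$-bounded symbol class $\mathcal{T}$ in place of a single $R$-bounded operator, combined with property $(\alpha)$ to handle the Rademacher average over $k$. Fourth, pass to general $f_{k}\in R\HT^{\infty}(\St_{\w};\A\cap\mathcal{T})$ by the regularizers $\tau_{j}$ and the operator-valued Convergence Lemma, exactly as in the proof of Theorem \ref{main theorem}; since $R$-boundedness is stable under taking such limits (closures of $R$-bounded sets are $R$-bounded), the estimate survives. Finally, the ``in particular'' clause follows by taking $\mathcal{T}$ to be the unit ball of scalars times $\id$, whose $R$-bound is $1$ by the Kahane contraction principle, and noting that every $f\in\HT^{\infty}(\St_{\w})$ with $\|f\|_{\HT^{\infty}(\St_{\w})}\leq 1$ lies in $R\HT^{\infty}(\St_{\w};\A\cap\mathcal{T})$ with $R\HT^{\infty}$-norm $\leq 1$.

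The main obstacle I anticipate is the bookkeeping in Step 3: one must verify that property $(\alpha)$ genuinely suffices to commute the Rademacher average over the index $k$ with the vector-valued Besov-space norms and with the $R$-bound of the symbol family, and that the relevant ``doubled'' space still has type $p$ and cotype $q$ with controlled constants. The cleanest route is probably to phrase the whole transference diagram on $\Ell^{2}(\Omega;X)$ from the start — property $(\alpha)$ ensures $\Ell^{2}(\Omega;X)$ has the same type and cotype and that $R$-bounds of diagonal operator families on $X$ become norm bounds of a single multiplier on $\Ell^{2}(\Omega;X)$ — so that Proposition \ref{main operator-valued proposition} can be invoked almost verbatim on $\Ell^{2}(\Omega;X)$ and then read back on $X$. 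Everything else (the regularization, the Convergence Lemma, the consistency of the calculi) is routine once this identification is in place.
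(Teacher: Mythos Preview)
Your proposal is correct and, in its ``cleanest route'' formulation, coincides with the paper's proof: the paper passes to the Rademacher subspace $\Rad^{n}(X)\subseteq\Ell^{2}([0,1];X)$, applies Proposition~\ref{main operator-valued proposition} to the diagonal operator $\widetilde{A}$ there (using that $\Rad^{n}(X)$ inherits type $p$ and cotype $q$ with constants independent of $n$), builds the single operator-valued symbol $f(z)(\sum_{k}r_{k}x_{k})=\sum_{k}r_{k}f_{k}(z)x_{k}$, and invokes property $(\alpha)$ to bound $\|f\|_{R\HT^{\infty}(\St_{\w};\widetilde{\A})}$ by $C\,R_{X}(\mathcal{T})$. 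One small correction: the type/cotype of $\Rad^{n}(X)$ (or $\Ell^{2}(\Omega;X)$) does not require property $(\alpha)$; property $(\alpha)$ enters only in bounding the $R$-bound of the diagonal symbol family on $\Rad^{n}(X)$ by $R_{X}(\mathcal{T})$, and in the norm equivalence $\D_{\widetilde{A}}(\theta,1)\simeq\Rad^{n}(\D_{A}(\theta,1))$ no randomization is needed beyond the definition.
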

\begin{proof}
The proof is similar to that of \cite[Theorem 12.8]{Kunstmann-Weis04}. Write $\theta:=\frac{1}{p}-\frac{1}{q}$ and let $(r_{k})_{k\in\N}$ be a Rademacher sequence on $[0,1]$. Fix $n\in\N$ and let $\Rad^{n}(X):=\{\sum_{k=1}^{n}r_{k}x_{k}\mid (x_{k})_{k=1}^{n}\subseteq X\} \subseteq\Ell^{2}([0,1];X)$. Let $\widetilde{A}$ be the operator on $\Rad^{n}(X)$ with domain
\begin{align*}
\D(\widetilde{A})=\Big\{\sum_{k=1}^{n}r_{k}x_{k}\in\Rad^{n}(X)\Big| (x_{n})_{k=1}^{n}\subseteq\D(A)\Big\}
\end{align*}
such that $\widetilde{A}(\sum_{k=1}^{n}r_{k}x_{k}):=\sum_{k=1}^{n}r_{k}Ax_{k}$ for $\sum_{k=1}^{n}r_{k}x_{k}\in\D(\widetilde{A})$. Then $-\ui \widetilde{A}$ generates the $C_{0}$-group $(\widetilde{U}(s))_{s\in\R}\subseteq\La(\Rad^{n}(X))$ given by
\begin{align*}
\widetilde{U}(s)\Big(\sum_{k=1}^{n}r_{k}x_{k}\Big)=\sum_{k=1}^{n}r_{k}U(s)x_{k}
\end{align*}
for $s\in\R$ and $\sum_{k=1}^{n}r_{k}x_{k}\in\Rad^{n}(X)$. Note that $\|\widetilde{U}(s)\|_{\La(\Rad^{n}\!(X))}=\|U(s)\|_{\La(X)}$ for all $s\in\R$. Moreover, $\Rad^{n}(X)\subseteq L^{2}([0,1];X)$ has type $p$ and cotype $q$ with $\tau_{p,\Rad^{n}\!(X)}\leq C_{p}\tau_{p,X}$ and $c_{q,\Rad^{n}\!(X)}\leq C_{q}c_{q,X}$ for constants $C_{p},C_{q}\geq0$ depending only on $p$ and $q$ that come from the Kahane-Khintchine inequalities. By Proposition \ref{main operator-valued proposition} and Remark \ref{constant} there exists a constant $C_{1}\geq 0$ independent of $n$ such that
\begin{align}\label{bound for Atilde}
\|f(\widetilde{A})\|_{\La(\D_{\widetilde{A}}(\theta,1),\Rad^{n}\!(X))}\leq C_{1}\norm{f}_{R\HT^{\infty}\!(\St_{\w};\widetilde{\mathcal{A}})}
\end{align}
for all $f\in R\HT^{\infty}\!(\St_{\w};\widetilde{\mathcal{A}})$, where $\widetilde{\mathcal{A}}\subseteq\La(\Rad^{n}(X))$ is the algebra of operators commuting with $\widetilde{A}$.

Let $\mathcal{T}\subseteq\La(X)$ be $R$-bounded and let $f_{1},\ldots, f_{n}\in R\HT^{\infty}\!(\St_{\w};\mathcal{A}\cap\mathcal{T})$. Define
\begin{align*}
f(z)\Big(\sum_{k=1}^{n}r_{k}x_{k}\Big)=\sum_{k=1}^{n}r_{k}f_{k}(z)x_{k}
\end{align*}
for $z\in \St_{\w}$ and $\sum_{k=1}^{n}r_{k}x_{k}\in\Rad^{n}(X)$. We will now show that the range of $f$ is $R$-bounded in $\La(\Rad^{n}(X))$, from which it will follow in particular that $f:\St_{\w}\to\widetilde{\mathcal{A}}$ is well-defined. 

Let $(r_{j}')_{j\in\N}$ be a Rademacher sequence on $[0,1]$, independent of $(r_{k})_{k\in\N}$. Let $m\in\N$, $(z_{j})_{j=1}^{m}\subseteq\St_{\w}$ and $(y_{j})_{j=1}^{m}\subseteq\Rad^{n}(X)$. Write $y_{j}=\sum_{k=1}^{n}r_{k}x_{jk}$ for $j\in\{1,\ldots, m\}$ and $(x_{jk})_{k=1}^{n}\subseteq X$. Then \cite[Lemma 4.11 and Remark 4.10]{Kunstmann-Weis04} yield a constant $C_{2}\geq 0$ depending only on $X$ such that
\begin{align*}
\Big\|\sum_{j=1}^{m}r_{j}'f(z_{j})y_{j}\Big\|_{\Ell^{2}([0,1];\Rad^{n}\!(X))}^{2}&=\int_{0}^{1}\int_{0}^{1}\Big\|\sum_{j=1}^{m}\sum_{k=1}^{n}r_{j}'(v)r_{k}(u)f(z_{j})x_{jk}\Big\|_{X}^{2}\ud u \ud v\\
&\leq C_{2}^{2}R_{X}(\mathcal{T})^{2}\!\int_{0}^{1}\int_{0}^{1}\Big\|\sum_{j=1}^{m}\sum_{k=1}^{n}r_{j}'(v)r_{k}(u)x_{jk}\Big\|_{X}^{2}\ud u\ud v\\
&=C_{2}^{2}R_{X}(\mathcal{T})^{2}\Big\|\sum_{j=1}^{m}r_{j}'y_{j}\Big\|_{\Ell^{2}([0,1];\Rad^{n}\!(X))}^{2}.
\end{align*}
Hence $f\in R\HT^{\infty}\!(\St_{\w};\widetilde{\mathcal{A}})$ with $\norm{f}_{R\HT^{\infty}\!(\St_{\w};\widetilde{\mathcal{A}})}\leq C_{2}R_{X}(\mathcal{T})$. Combining this with \eqref{bound for Atilde} yields
\begin{align}\label{complete bound for Atilde}
\|f(\widetilde{A})\|_{\La(\D_{\widetilde{A}}(\theta,1),\Rad^{m}(X))}\leq C_{1}C_{2}R_{X}(\mathcal{T}).
\end{align}
Note that $\big\|\sum_{k=1}^{n}r_{k}x_{k}\|_{\Rad^{n}\!(\D(A))}\leq \big\|\sum_{k=1}^{n}r_{k}x_{k}\|_{\D(\widetilde{A})}\leq 2\big\|\sum_{k=1}^{n}r_{k}x_{k}\|_{\Rad^{n}\!(\D(A))}$ for all $\sum_{k=1}^{n}r_{k}x_{k}\in \D(\widetilde{A})$, hence $\D_{\widetilde{A}}(\theta,1)=\Rad^{n}(\D_{A}(\theta,1))$ with 
\begin{align}\label{bound for Atilde interpolation}
\Big\|\sum_{k=1}^{n}r_{k}x_{k}\Big\|_{\Rad^{n}\!(\D_{A}(\theta,1))}\leq \Big\|\sum_{k=1}^{n}r_{k}x_{k}\Big\|_{\D_{\widetilde{A}}(\theta,1)}\leq 2\Big\|\sum_{k=1}^{n}r_{k}x_{k}\Big\|_{\Rad^{n}\!(\D_{A}(\theta,1))}
\end{align}
for all $\sum_{k=1}^{n}r_{k}x_{k}\in \D_{\widetilde{A}}(\theta,1)$. Also, it is straightforward to check (by regularization) that $f(\widetilde{A})(\sum_{k=1}^{n}r_{k}x_{k})=\sum_{k=1}^{n}r_{k}f_{k}(A)x_{k}$ for all $\sum_{k=1}^{n}r_{k}x_{k}\in\D(f(\widetilde{A}))$. Hence \eqref{complete bound for Atilde} and \eqref{bound for Atilde interpolation} yield
\begin{align*}
\Big\|\sum_{k=1}^{n}r_{k}f_{k}(A)x_{k}\Big\|_{\Ell^{2}([0,1];X)}&=\Big\|f(\widetilde{A})\Big(\sum_{k=1}^{n}r_{k}x_{k}\Big)\Big\|_{\Rad^{n}\!(X)}\\
&\leq C_{1}C_{2}R_{X}(\mathcal{T})\Big\|\sum_{k=1}^{n}r_{k}x_{k}\Big\|_{\D_{\widetilde{A}}(\theta,1)}\\
&\leq CR_{X}(\mathcal{T})\Big\|\sum_{k=1}^{n}r_{k}x_{k}\Big\|_{\Ell^{2}([0,1];\D_{A}(\theta,1))}
\end{align*}
for all $x_{1},\ldots, x_{n}\in \D_{A}(\theta,1)$, where $C\geq 0$ is independent of $\mathcal{T}\subseteq \La(X)$, $n\in\N$, $f_{1},\ldots f_{n}\in R\HT^{\infty}\!(\St_{\w};\mathcal{A}\cap \mathcal{T})$ and $x_{1},\ldots, x_{n}\in\D_{A}(\theta,1)$. This concludes the proof.
\end{proof}

In the same manner we deduce an $R$-bounded version of Theorem \ref{second main theorem}, under the extra assumption of $p$-convexity for some $p>1$. By \cite[Corollary 1.f.9]{Lindenstrauss-Tzafriri79} the latter is equivalent to the assumption of nontrivial type. Recall that any closed subspace of a Banach lattice with finite cotype has property $(\alpha)$.

\begin{theorem}\label{second R-bounded calculus}
Let $X$ be isomorphic to a complemented subspace of a $p$-convex and $q$-concave Banach lattice, for $p\in(1,2]$ and $q\in[2,\infty)$. Let $-\ui A$ generate a $C_{0}$-group $(U(s))_{s\in\R}\subseteq\La(X)$, and let $\lambda>\w>\theta(U)$. Then there exists a constant $C\geq 0$ such that 
\begin{align*}
R_{\D((\lambda+\ui A)^{\frac{1}{p}-\frac{1}{q}}),X}(\{f(A)\mid f\in R\HT^{\infty}\!(\St_{\w};\A\cap \mathcal{T})\})\leq C R_{X}(\mathcal{T})
\end{align*}
for each $R$-bounded $\mathcal{T}\subseteq\La(X)$.
\end{theorem}
\begin{proof}
We use notation as in the proof of Theorem \ref{R-bounded calculus}. It suffices to show that there exists a constant $C\geq 0$ independent of $n\in\N$ such that 
\begin{align}\label{second bound for Atilde}
\|f(\widetilde{A})\|_{\La(\D((\lambda+\ui \widetilde{A})^{\frac{1}{p}-\frac{1}{q}}),\Rad^{n}\!(X))}\leq C\norm{f}_{R\HT^{\infty}\!(\St_{\w};\widetilde{\mathcal{A}})}
\end{align}
for all $f\in R\HT^{\infty}\!(\St_{\w};\widetilde{\mathcal{A}})$. Indeed, once this has been established, the rest of the proof is identical to that of Theorem \ref{R-bounded calculus}. To obtain \eqref{second bound for Atilde} apply \eqref{operator-valued fractional inequality} to $\widetilde{A}$ on $\Rad^{n}(X)$. To see that the constant $C$ that one gets from this can be chosen to be independent of $n$, it suffices by Remark \ref{constant} to show that $\Rad^{n}(X)$ is complemented in $\Ell^{2}([0,1];X)$ by a projection $P_{n}\in\La(\Ell^{2}([0,1];X))$ with $\norm{P_{n}}_{\La(\Ell^{2}([0,1];X))}\leq C'$ for some $C'\geq 0$ independent of $n$. The latter in turn follows from the fact that $X$ has nontrivial type and from Pisier's characterization in \cite{Pisier82} of the $K$-convex Banach spaces as the spaces with nontrivial type.
\end{proof}

We do not know whether the assumption in Theorem \ref{second R-bounded calculus} that $X$ has nontrivial type is necessary.

From Theorems \ref{R-bounded calculus} and \ref{second R-bounded calculus} one can deduce $R$-bounded versions of Theorems \ref{BIP theorem} and \ref{cosine function result} in the obvious manner. Also, as a corollary of our results, for $C_{0}$-groups we improve Theorem \cite[Theorem 6.1]{Hytonen-Veraar09}.

\begin{corollary}\label{R-bounded group}
Let $-\ui A$ generate a $C_{0}$-group $(U(s))_{s\in\R}\subseteq\La(X)$ on a Banach space $X$ with property $(\alpha)$, type $p\in[1,2]$ and cotype $q\in[2,\infty)$. Let $\w>\theta(U)$. Then $\{\ue^{-\w\abs{s}}U(s)\mid s\in\R\}\subseteq\La(\D_{A}(\frac{1}{p}-\frac{1}{q},1),X)$ is $R$-bounded.

If $X$ is isomorphic to a complemented subspace of a $p$-convex and $q$-concave Banach lattice for $p\in(1,2]$ and $q\in[2,\infty)$, then $\{\ue^{-\w\abs{s}}U(s)\mid s\in\R\}\subseteq\La(\D(\lambda+\ui A)^{\frac{1}{p}-\frac{1}{q}},X)$ is $R$-bounded for each $\lambda>\w$.
\end{corollary}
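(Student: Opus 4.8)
The plan is to deduce Corollary \ref{R-bounded group} directly from Theorems \ref{R-bounded calculus} and \ref{second R-bounded calculus} by exhibiting, for each $s\in\R$, the operator $\ue^{-\w\abs{s}}U(s)$ as the value $f_{s}(A)$ of a suitably chosen holomorphic function $f_{s}\in\HT^{\infty}\!(\St_{\w})$, in such a way that the family $\{f_{s}\mid s\in\R\}$ is \emph{uniformly bounded} in $\HT^{\infty}\!(\St_{\w})$. The natural candidate is $f_{s}(z):=\ue^{-\w\abs{s}}\ue^{-\ui s z}$ for $z\in\St_{\w}$. Indeed, for $z\in\St_{\w}$ one has $\abs{\Imag(z)}>\w$, so $\abs{\ue^{-\ui s z}}=\ue^{s\,\Imag(z)}$, and although this is unbounded on $\St_{\w}$ for fixed $s\neq 0$ (the strip is unbounded in the imaginary direction), one must be slightly careful: $f_{s}$ is \emph{not} in $\HT^{\infty}\!(\St_{\w})$. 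Thus the first step is to replace $\St_{\w}$ by a smaller strip. Fix $\w<\w'<\w''$ with $\w'>\theta(U)$; then $z\mapsto \ue^{-\ui s z}$ restricted to $\St_{\w'}$ still fails to be bounded, so instead I would work with the \emph{horizontal} strip: note that for the generator of a group, $e^{-isz}$ is bounded only on horizontal strips of finite width. The cleanest fix is to use the identity $U(s)=(\ue^{-\ui s\cdot})(A)$ as an element of the Hille–Phillips calculus and to regularize.

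Concretely, here is the argument I would carry out. By Lemma \ref{decay implies fourier transform} applied with $\mu=\ue^{-\w\abs{s}}\delta_{s}\in\M_{\w}(\R)$ — more precisely, $\mu_{s}(\ud t):=\ue^{-\w\abs{s}}\delta_{s}(\ud t)$ lies in $\M_{\w'}(\R)$ for every $\w'<\w$ once we note $\|\mu_{s}\|_{\M_{\w'}(\R)}=\ue^{(\w'-\w)\abs{s}}\leq 1$ — we get $U_{\mu_{s}}=\ue^{-\w\abs{s}}U(s)$, and $\F\mu_{s}(z)=\ue^{-\w\abs{s}}\ue^{-\ui s z}$. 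The point is that $U_{\mu_{s}}=g_{s}(A)$ where $g_{s}:=\F(\mu_{s})_{\w_{0}}$ for the regularized measure as in \eqref{perturbed measure}; however the cleaner route is: since $\w>\theta(U)$, choose $\alpha$ with $\theta(U)<\alpha<\w$, and observe $\mu_{s}\in\M_{\alpha}(\R)$ with $\|\mu_{s}\|_{\M_{\alpha}(\R)}=\ue^{(\alpha-\w)\abs{s}}\leq 1$ for all $s\in\R$. Then by \eqref{Fourier transform perturbed measure} and the estimate $\norm{\F(\mu_{s})_{\alpha}}_{\HT^{\infty}\!(\St_{\w})}\leq \|\mu_{s}\|_{\M_{\alpha}(\R)}\le 1$ (this is exactly the bound used at the end of the proof of Theorem \ref{main theorem}, with $\w$ replaced by $\alpha$), we see that $\{\F(\mu_{s})_{\alpha}\mid s\in\R\}$ is a family in $\HT^{\infty}\!(\St_{\w})$ with supremum norms bounded by $1$, and each such function is scalar-valued, hence defines an element of $R\HT^{\infty}\!(\St_{\w};\A)$ of $R$-bound $\le 1$ by the Kahane contraction principle.

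Applying Theorem \ref{R-bounded calculus} with $\mathcal{T}$ the closed unit ball of $\C\subseteq\La(X)$ — whose $R$-bound is $1$ — yields that
\begin{align*}
\{\F(\mu_{s})_{\alpha}(A)\!\restriction_{\D_{A}(\frac{1}{p}-\frac{1}{q},1)}\mid s\in\R\}\subseteq\La(\D_{A}(\tfrac{1}{p}-\tfrac{1}{q},1),X)
\end{align*}
is $R$-bounded, since each $\F(\mu_{s})_{\alpha}$ lies in $R\HT^{\infty}\!(\St_{\w};\A\cap\mathcal{T})$ (possibly after rescaling by the uniform bound $1$). Finally, by Lemma \ref{decay implies fourier transform} and the consistency of the Hille–Phillips calculus with the strip-type calculus, $\F(\mu_{s})_{\alpha}(A)=U_{\mu_{s}}=\ue^{-\w\abs{s}}U(s)$, which gives the first assertion. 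For the second assertion the argument is identical, using Theorem \ref{second R-bounded calculus} in place of Theorem \ref{R-bounded calculus}, which is why the extra hypotheses (complemented in a $p$-convex, $q$-concave lattice; nontrivial type) are imposed there.

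The main obstacle — and the only genuinely delicate point — is the bookkeeping around which weighted measure algebra the Dirac masses $\ue^{-\w\abs{s}}\delta_{s}$ belong to, and verifying that the associated Fourier transforms have $\HT^{\infty}$-norm on $\St_{\w}$ bounded \emph{uniformly in $s$}. Everything hinges on the strict inequality $\alpha<\w$, which forces the exponential weight $\ue^{(\alpha-\w)\abs{s}}$ to be $\le 1$; without room to choose such an $\alpha$ (i.e.\ if one tried to work on the critical strip) the family would not be uniformly bounded. Once this is set up correctly, the rest is a direct citation of Theorems \ref{R-bounded calculus} and \ref{second R-bounded calculus} together with Lemma \ref{decay implies fourier transform}, and no further computation is required.
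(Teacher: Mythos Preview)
Your strategy --- realize each $\ue^{-\w|s|}U(s)$ as $f_{s}(A)$ for a scalar $f_{s}$ of uniformly bounded $\HT^{\infty}$-norm and invoke Theorems \ref{R-bounded calculus} and \ref{second R-bounded calculus} with $\mathcal{T}$ the closed unit disc of $\C$ --- is exactly the paper's approach. Its entire proof is: apply those theorems to $\{\ue^{-(\w|s|+\ui s\cdot)}\mid s\in\R\}\subseteq\HT^{\infty}(\St_{\w'})$ for any $\w'\in(\theta(U),\w)$.

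Your execution, however, goes astray. First, the strip is $\St_{\w}=\{z:|\Imag(z)|<\w\}$ (the ``$>$'' in \eqref{strip} is a typo, as every subsequent use of $\St_{\w}$ makes clear), so $f_{s}(z):=\ue^{-\w|s|-\ui sz}$ already satisfies $|f_{s}(z)|=\ue^{-\w|s|+s\Imag(z)}\le\ue^{(\w'-\w)|s|}\le1$ on $\St_{\w'}$ for every $\w'<\w$; there is nothing to repair. Second, the detour through the perturbed measures $(\mu_{s})_{\alpha}$ is not merely unnecessary but incorrect. The identity you invoke, $\F((\mu_{s})_{\alpha})(A)=U_{\mu_{s}}$, is false: Hille--Phillips/strip-type consistency (Lemma \ref{decay implies fourier transform}) gives $(\F\mu_{s})(A)=U_{\mu_{s}}$, whereas $(\F((\mu_{s})_{\alpha}))(A)=U_{(\mu_{s})_{\alpha}}=\cosh(\alpha s)\,\ue^{-\w|s|}U(s)$. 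Likewise your bound $\|\F((\mu_{s})_{\alpha})\|_{\HT^{\infty}(\St_{\w})}\le1$ fails: on $\St_{\w}$ the supremum of $|\F((\mu_{s})_{\alpha})|$ equals $\cosh(\alpha s)$, which is unbounded in $s$. (The estimate from the proof of Theorem \ref{main theorem} that you cite controls the $\Ellinfty(\R)$-norm of $\F\mu_{\alpha}$ by the $\HT^{\infty}(\St_{\w})$-norm of $\F\mu$, which is the opposite direction from what you need.)

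Dropping the perturbed-measure machinery and working directly with $f_{s}=\F\mu_{s}=\ue^{-\w|s|-\ui s\cdot}\in\HT^{\infty}(\St_{\w'})$, $\w'\in(\theta(U),\w)$, yields the paper's one-line proof.
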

\begin{proof}
Let $\w'\in(\theta(U),\w)$ and let $\mathcal{T}$ be the unit ball of $\C\subseteq\La(X)$. Now apply Theorems \ref{R-bounded calculus} and \ref{second R-bounded calculus} to $\{\ue^{-(\w\abs{s}+\ui s\cdot)}\mid s\in\R\}\subseteq R\HT^{\infty}(\St_{\w'};\mathcal{A}\cap \mathcal{T})$.
\end{proof}

As the following example shows, Corollary \ref{R-bounded group} and Theorem \ref{second R-bounded calculus} are sharp.

\begin{example}\label{sharpness example}
Let $p\in[1,\infty)$ and let $(U(s))_{s\in\R}\subseteq\La(X)$ be the left translation group on $X:=\Ell^{p}(\R)$ with generator $-\ui A$, where $Af:=\ui f'$ for $f\in\D(A)=\W^{1,p}(\R)$. Then $\D((\ui A)^{\alpha})=\mathrm{H}^{\alpha,p}(\R)$, where $\mathrm{H}^{\alpha,p}(\R)$ is a Bessel-potential space. It is shown in \cite[Example 6.2]{Hytonen-Veraar09} that $\{U(s)\mid s\in[-1,1]\}\subseteq \La(\mathrm{H}^{\alpha,p}(\R),\Ell^{p}(\R))$ is not $R$-bounded for $\alpha\in[0,\frac{1}{p}-\frac{1}{q})$. Hence $\{\ue^{-\w\abs{s}}U(s)\mid s\in\R\}\subseteq \La(\D((\ui A)^{\alpha}),X)$ is not $R$-bounded for $w\in\R$ and $\alpha\in[0,\frac{1}{p}-\frac{1}{q})$.
\end{example}

\section{Rational approximation}\label{rational approximation}

In this section we give an application of the results in previous sections to the theory of rational approximation of $C_{0}$-groups. Note that the results in Section \ref{operator-valued calculus} can be used to replace the uniform bounds in this section by $R$-bounds. 

Recall that a $C_{0}$-semigroup $(T(t))_{t\geq 0}\subseteq\La(X)$ on a Banach space $X$ is \emph{exponentially stable} if there exist $M\geq1$ and $\w>0$ such that $\norm{T(t)}_{\La(X)}\leq M\ue^{-\w t}$ for all $t\geq 0$. We note that, if $-A$ generates an exponentially stable $C_{0}$-semigroup $(T(t))_{t\geq 0}\subseteq\La(X)$ such that $T(t)\in\La(X)$ is invertible for each $t\geq 0$, then $-A$ in fact generates the $C_{0}$-group $(U(s))_{s\in\R}\subseteq\La(X)$, where $U(s):=T(s)$ for $s\geq 0$ and $U(s):=T(-s)^{-1}$ for $s<0$. Then $f(A)$ is defined as an unbounded operator for each $f\in\HT^{\infty}\!(\C_{+})$ by a shifted version of the strip-type calculus from Section \ref{functional calculus}.

\begin{lemma}\label{half-plane estimate}
Let $-A$ generate an exponentially stable $C_{0}$-semigroup $(T(t))_{t\geq 0}$ on a Banach space $X$ with type $p\in[1,2]$ and cotype $q\in[2,\infty)$. Suppose that $T(t)$ is invertible for all $t\geq0$. Then there exists a constant $C\geq 0$ such that
\begin{align}\label{half-plane interpolation estimate}
\norm{f(A)}_{\La(\D_{A}(\frac{1}{p}-\frac{1}{q},1),X)}\leq C\norm{f}_{\HT^{\infty}\!(\C_{+})}
\end{align}
for all $f\in\HT^{\infty}\!(\C_{+})$. For each $\beta>\tfrac{1}{p}-\tfrac{1}{q}$ there exists a constant $C'\geq 0$ such that
\begin{align}\label{half-plane fractional estimate}
\norm{f(A)A^{-\beta}}_{\La(X)}\leq C'\norm{f}_{\HT^{\infty}\!(\C_{+})}
\end{align}
for all $f\in\HT^{\infty}\!(\C_{+})$. 
\end{lemma}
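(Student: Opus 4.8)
The plan is to deduce both estimates from Theorem \ref{main theorem} by realizing $A$, after an affine change of variables, as $\ui$ times a strip-type operator whose associated $C_{0}$-group has small exponential type.

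First I would fix the group structure. Since $(T(t))_{t\geq 0}$ is exponentially stable with $\norm{T(t)}_{\La(X)}\leq M\ue^{-\w t}$ and each $T(t)$ is invertible, $-A$ generates the $C_{0}$-group $(U(s))_{s\in\R}$ with $U(s)=T(s)$ for $s\geq 0$ and $U(s)=T(-s)^{-1}$ for $s<0$; as $(T(t)^{-1})_{t\geq 0}$ is again a $C_{0}$-semigroup there are $M_{1}\geq 1$ and $\w_{1}\geq 0$ with $\norm{U(s)}_{\La(X)}\leq M_{1}\ue^{\w_{1}\abs{s}}$ for all $s\in\R$. Put $c:=\tfrac{1}{2}(\w+\w_{1})$ and $C:=-\ui(A-c)$. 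Then $-\ui C=c-A$ generates the $C_{0}$-group $(\widetilde{U}(s))_{s\in\R}$ with $\widetilde{U}(s)=\ue^{cs}U(s)$, and estimating $\ue^{cs}\norm{U(s)}_{\La(X)}$ separately for $s\geq 0$ and $s<0$ gives $\theta(\widetilde{U})\leq\w_{0}:=\max(0,\tfrac{1}{2}(\w_{1}-\w))$, with $\w_{0}<c$ precisely because $\w>0$. Thus $C$ is a strip-type operator for which $-\ui C$ generates a $C_{0}$-group of type $\leq\w_{0}$, and $\D(C)=\D(A)$ with equivalent graph norms, so $\D_{C}(\tfrac{1}{p}-\tfrac{1}{q},1)=\DAPQBIG$ with equivalent norms.

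Next I would transfer the function. Fix $\w''\in(\w_{0},c]$. For $f\in\HT^{\infty}\!(\C_{+})$ set $g(w):=f(c+\ui w)$ for $w\in\St_{\w''}$; since $\Real(c+\ui w)=c-\Imag(w)>c-\w''\geq 0$ this defines $g\in\HT^{\infty}\!(\St_{\w''})$ with $\norm{g}_{\HT^{\infty}\!(\St_{\w''})}\leq\norm{f}_{\HT^{\infty}\!(\C_{+})}$. Because $c+\ui C=A$ and the $\C_{+}$-calculus is by construction the shifted strip-type calculus, one has $f(A)=g(C)$: for $g$ with polynomial decay this is a change of variables in the contour integral \eqref{Cauchy integral}, and the general case follows by regularization together with Lemma \ref{convergence lemma}, just as in the argument of \cite[Theorem 4.2.4]{Haase06a} used to prove Theorem \ref{BIP theorem}. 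Applying Theorem \ref{main theorem} to $C$ (with $\w''>\w_{0}\geq\theta(\widetilde{U})$) then gives $\DAPQBIG=\D_{C}(\tfrac{1}{p}-\tfrac{1}{q},1)\subseteq\D(g(C))=\D(f(A))$ and, with constants independent of $f$,
\begin{align*}
\norm{f(A)}_{\La(\DAPQ,X)}\leq C_{0}\norm{g(C)}_{\La(\D_{C}(\frac{1}{p}-\frac{1}{q},1),X)}\leq C_{0}'\norm{g}_{\HT^{\infty}\!(\St_{\w''})}\leq C_{0}'\norm{f}_{\HT^{\infty}\!(\C_{+})},
\end{align*}
which is \eqref{half-plane interpolation estimate}. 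For \eqref{half-plane fractional estimate} I would use that $A$ is an injective sectorial operator: the bound $\norm{\lambda R(\lambda,A)}_{\La(X)}\leq M\abs{\lambda}/(\w-\Real(\lambda))$ for $\Real(\lambda)<\w$ shows $A$ is sectorial of angle $\tfrac{\pi}{2}$, and $0\in\rho(A)$. Hence $A^{-\beta}\in\La(X)$ with $\ran(A^{-\beta})=\D(A^{\beta})$, and for $\beta>\tfrac{1}{p}-\tfrac{1}{q}$ the continuous embedding $\D(A^{\beta})\hookrightarrow\DAPQBIG$ holds by the standard interpolation properties of fractional power domains (cf.\ \cite[Propositions 1.1.4 and 4.1.7]{Lunardi09}); composing $A^{-\beta}\in\La(X,\DAPQBIG)$ with \eqref{half-plane interpolation estimate} yields \eqref{half-plane fractional estimate}.

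The only genuinely delicate point is the identification $f(A)=g(C)$ of the half-plane calculus of $A$ with the strip-type calculus of the rotated operator $C$; this is a substitution rule of the same nature as \cite[Theorem 4.2.4]{Haase06a}, and once it is in hand everything else is routine bookkeeping together with Theorem \ref{main theorem} and basic interpolation facts. A small but essential check is $\w_{0}<c$, i.e.\ the nonemptiness of the interval $(\w_{0},c]$ from which $\w''$ is chosen: this is exactly where exponential stability ($\w>0$) enters, ensuring that $\St_{\w''}$ can simultaneously contain the spectral strip of $C$ and be mapped into $\C_{+}$ by $w\mapsto c+\ui w$.
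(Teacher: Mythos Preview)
Your argument is correct and follows essentially the same route as the paper's proof. The paper also passes to the strip-type operator $-\ui(A-\w)$ for a suitable shift $\w$, applies Theorem \ref{main theorem} to the transferred function $f(\ui\cdot+\w)\in\HT^{\infty}\!(\St_{\w})$, and invokes the composition rule $f(\ui\cdot+\w)(-\ui(A-\w))=f(A)$, which, as you note, is of the same nature as \cite[Theorem 4.2.4]{Haase06a}. Your choice $c=\tfrac{1}{2}(\w+\w_{1})$ and the explicit verification $\w_{0}<c$ make precise what the paper phrases as ``for $\w>0$ large enough.'' For \eqref{half-plane fractional estimate} the paper cites Proposition \ref{resolvent operators} rather than arguing directly via sectoriality and the embedding $\D(A^{\beta})\hookrightarrow\DAPQBIG$, but the proof of Proposition \ref{resolvent operators} is exactly that embedding argument, so the two presentations coincide in substance.
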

\begin{proof}
Let $f\in\HT^{\infty}\!(\C_{+})$ and apply Theorem \ref{main theorem} and Proposition \ref{resolvent operators} to the strip-type operator $-\ui(A-\w)$ and the function $f(\ui\cdot+\w)\in\HT^{\infty}\!(\St_{\w})$ for $\w>0$ large enough. Then use the composition rule
\begin{align*}
f(\ui\cdot+\w)(-\ui(A-\w))=f(A),
\end{align*}
which is straightforward to prove in the same manner as \cite[Theorem 4.2.4]{Haase06a}.
\end{proof}

\vanish{
\begin{remark}\label{only strip needed}
Note that in Lemma \ref{half-plane estimate} one does not actually need the function $f$ to be defined or bounded on all of $\C_{+}$, merely on a strip $\left\{z\in\C\mid \w_{1}<\Real(z)<\w_{2}\right\}$ for suitable $\w_{1},\w_{2}\in\R$. The estimates in \eqref{half-plane interpolation estimate} and \eqref{half-plane fractional estimate} then hold with respect to the supremum norm of $f$ on this strip.

In the same manner, in Lemma \ref{other approximation methods} one can restrict to functions which are bounded in supremum norm by $1$ on a suitable strip.
\end{remark}
}

Lemma \ref{half-plane estimate} applies to the important question of the power-boundedness of the \emph{Cayley transform} $(1-A)(1+A)^{-1}$ of $A$, which in turn is equivalent to the stability of the Crank-Nicholson approximation scheme associated with $A$.

\begin{corollary}\label{Cayley transform}
Let $-A$ generate an exponentially stable $C_{0}$-semigroup $(T(t))_{t\geq 0}\subseteq\La(X)$ on a Banach space $X$ with type $p\in[1,2]$ and cotype $q\in[2,\infty)$. Suppose that $T(t)$ is invertible for all $t\geq0$. Then
\begin{align*}
\sup_{n\in\N}\norm{(1-A)^{n}(1+A)^{-n}}_{\La(\D_{A}(\frac{1}{p}-\frac{1}{q},1),X)}<\infty.
\end{align*}
\end{corollary}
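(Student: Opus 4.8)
The plan is to recognize the powers of the Cayley transform as the operators $r^{n}(A)$ associated with the single bounded holomorphic function $r(z):=\frac{1-z}{1+z}$ on $\C_{+}$, and then to invoke the functional calculus estimate \eqref{half-plane interpolation estimate} from Lemma \ref{half-plane estimate}. First I would note that, since $-A$ generates an exponentially stable $C_{0}$-semigroup with $T(t)$ invertible for all $t\geq0$, one has $\sigma(A)\subseteq\{z\in\C\mid\Real(z)\geq\w\}$ for some $\w>0$; in particular $-1\in\rho(A)$, so $(1+A)^{-1}\in\La(X)$, and writing $r(z)=-1+\frac{2}{1+z}$ we obtain $r(A)=-\mathrm{I}+2(1+A)^{-1}=(1-A)(1+A)^{-1}\in\La(X)$ in the sense of the shifted strip-type $\HT^{\infty}\!(\C_{+})$-calculus used in Lemma \ref{half-plane estimate}.

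The second step is the elementary observation that the M\"{o}bius transformation $r$ maps $\C_{+}$ into the open unit disc $\Di$, since $\abs{1-z}^{2}-\abs{1+z}^{2}=-4\Real(z)<0$ whenever $\Real(z)>0$. Hence $\norm{r^{n}}_{\HT^{\infty}\!(\C_{+})}\leq1$ for every $n\in\N$. Moreover $r^{n}$ is a rational function whose only pole lies at $-1\in\rho(A)$, hence it is regularizable in $\HT^{\infty}\!(\C_{+})$, and by multiplicativity of the calculus together with the identification in the first step one gets $r^{n}(A)=\bigl((1-A)(1+A)^{-1}\bigr)^{n}=(1-A)^{n}(1+A)^{-n}=\frac{(1-A)^{n}}{(1+A)^{n}}$, the two factors commuting.

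Finally I would apply \eqref{half-plane interpolation estimate} with $f=r^{n}$, which produces a constant $C\geq0$ independent of $n$ such that
\[
\Big\|\frac{(1-A)^{n}}{(1+A)^{n}}\Big\|_{\La(\DAPQ,X)}=\norm{r^{n}(A)}_{\La(\DAPQ,X)}\leq C\norm{r^{n}}_{\HT^{\infty}\!(\C_{+})}\leq C,
\]
and the claim follows on taking the supremum over $n\in\N$. All of the analytic work is contained in Lemma \ref{half-plane estimate}; the only mild obstacle here is the bookkeeping that identifies $r^{n}(A)$ with the concrete operator $\frac{(1-A)^{n}}{(1+A)^{n}}$, which is routine since $r$ has no singularity in $\overline{\C_{+}}$.
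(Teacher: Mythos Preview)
Your proof is correct and follows exactly the approach intended by the paper: the corollary is stated immediately after Lemma \ref{half-plane estimate} with the remark that this lemma applies to the Cayley transform, and your argument spells out precisely this application by observing that $r(z)=\frac{1-z}{1+z}$ satisfies $\norm{r^{n}}_{\HT^{\infty}\!(\C_{+})}\leq 1$ and then invoking \eqref{half-plane interpolation estimate}.
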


For $n\in\N$ let $p_{n}$ and $q_{n}$ be the unique polynomials of degree $n$ and $n+1$, respectively, such that $p_{n}(0)=q_{n}(0)=1$ and such that
\begin{align*}
\left|\frac{p_{n}(z)}{q_{n}(z)}-\ue^{z}\right|\leq C\abs{z}^{2n+2}\end{align*}
for all $z$ in a neighborhood of $0\in\C$. Let $r_{n}:=\frac{p_{n}}{q_{n}}$. Then $r_{n}$ is the $n$-th \emph{subdiagonal Pad\'{e} approximation} of the exponential function. 

For $\frac{1}{p}-\frac{1}{q}\in[0,\frac{1}{2})$ the following proposition improves convergence rates obtained in \cite[Theorem 4.1]{Egert-Rozendaal13} for uniformly bounded $C_{0}$-semigroups on general Banach spaces. Note that, on a Banach space $X$ with type $p\in[1,2]$ and cotype $q\in[2,\infty)$, for $\alpha>\tfrac{1}{p}-\tfrac{1}{q}$ we obtain strong convergence of $r_{n}(-tA)$ to $T(t)$ on $\D(A^{\alpha})$ with rate $\cap_{a<\alpha}\,O(n^{-a+\frac{1}{p}-\frac{1}{q}})$, locally uniformly in $t$.

\begin{proposition}\label{convergence rates}
Let $-A$ generate an exponentially stable $C_{0}$-semigroup $(T(t))_{t\geq 0}$ on a Banach space $X$ with type $p\in[1,2]$ and cotype $q\in[2,\infty)$. Suppose that $T(t)$ is invertible for all $t\geq0$. Let $\alpha>\tfrac{1}{p}-\tfrac{1}{q}$ and $a\in(0,\alpha-\tfrac{1}{p}+\tfrac{1}{q})$. Then there exists a constant $C\geq 0$ such that
\begin{align*}
\norm{r_{n}(-tA)x-T(t)x}_{X}\leq Ct^{a}(n+1)^{-a}\norm{A^{\alpha}x}_{X}
\end{align*}
for all $t\in(0,\infty)$, all $n\in\N$ with $n> \frac{\alpha}{2}-1$ and all $x\in\D(A^{\alpha})$. Moreover, $(r_{n}(-tA))_{n\in\N}$ converges strongly on $\D_{A}(\tfrac{1}{p}-\tfrac{1}{q},1)$ to $T(t)$, locally uniformly in $t$.
\end{proposition}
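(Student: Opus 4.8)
The plan is to upgrade the quantitative estimate just established, which only controls $r_{n}(-tA)x-T(t)x$ for $x$ in the fractional domains $\D(A^{\alpha})$ with $\alpha>\tfrac{1}{p}-\tfrac{1}{q}$, to strong convergence on the full real interpolation space $\D_{A}(\tfrac{1}{p}-\tfrac{1}{q},1)$. The two ingredients will be a uniform boundedness bound for the operators $r_{n}(-tA)$ on $\D_{A}(\tfrac{1}{p}-\tfrac{1}{q},1)$, and the density of $\D(A)$ in $\D_{A}(\tfrac{1}{p}-\tfrac{1}{q},1)$; these combine through a standard $\varepsilon/3$ argument.

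First I would establish the uniform bound. It is classical that the subdiagonal Pad\'{e} approximations $r_{n}$ of the exponential function are $A$-acceptable, so that $r_{n}$ is analytic on $\overline{\C_{-}}$ with $\sup_{n\in\N}\sup_{\Real(z)\leq 0}\abs{r_{n}(z)}\leq 1$. Consequently, for each $t>0$ the function $z\mapsto r_{n}(-tz)$ is analytic on $\C_{+}$ and belongs to $\HT^{\infty}\!(\C_{+})$ with $\norm{r_{n}(-t\,\cdot)}_{\HT^{\infty}\!(\C_{+})}\leq 1$. Applying \eqref{half-plane interpolation estimate} of Lemma \ref{half-plane estimate} then produces a constant $C_{0}\geq 0$, independent of $n$ and $t$, with $\norm{r_{n}(-tA)}_{\La(\D_{A}(\frac{1}{p}-\frac{1}{q},1),X)}\leq C_{0}$. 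Moreover, $M:=\sup_{t\geq 0}\norm{T(t)}_{\La(X)}<\infty$ by exponential stability, and $\D_{A}(\tfrac{1}{p}-\tfrac{1}{q},1)$ embeds continuously into $X$, say with constant $C_{1}$.

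For the remaining step, recall that $A$ is densely defined, so (the second interpolation index being $1<\infty$) $\D(A)$ is dense in $\D_{A}(\tfrac{1}{p}-\tfrac{1}{q},1)$. Fixing $\alpha\in(\tfrac{1}{p}-\tfrac{1}{q},1]$ one has $\D(A)\subseteq\D(A^{\alpha})$, so the first part of the statement (with any $a\in(0,\alpha-\tfrac{1}{p}+\tfrac{1}{q})$, using that $t^{a}$ is bounded on compact subsets of $(0,\infty)$) shows that $r_{n}(-tA)y\to T(t)y$ in $X$ as $n\to\infty$, uniformly for $t$ in compact subsets of $(0,\infty)$, for every $y\in\D(A)$. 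Given $x\in\D_{A}(\tfrac{1}{p}-\tfrac{1}{q},1)$ and $\varepsilon>0$, I would pick $y\in\D(A)$ with $\norm{x-y}_{\frac{1}{p}-\frac{1}{q},1}$ small, split
\begin{align*}
r_{n}(-tA)x-T(t)x=r_{n}(-tA)(x-y)+\big(r_{n}(-tA)-T(t)\big)y+T(t)(y-x),
\end{align*}
estimate the first and third terms by $(C_{0}+MC_{1})\norm{x-y}_{\frac{1}{p}-\frac{1}{q},1}$ uniformly in $n$ and $t$, and let $n\to\infty$ in the middle term. The only delicate point will be the uniform boundedness step, which rests on the $A$-acceptability of the $r_{n}$ (equivalently, $\sup_{n}\norm{r_{n}}_{\HT^{\infty}\!(\C_{-})}<\infty$ together with analyticity on $\overline{\C_{-}}$) and on already having the interpolation-space calculus bound \eqref{half-plane interpolation estimate}; the density and $\varepsilon/3$ parts are routine.
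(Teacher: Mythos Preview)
Your proposal addresses only the ``Moreover'' assertion, treating the quantitative estimate in the first part of the proposition as already established. The paper does supply a proof of that first part: one sets $f(z):=(r_{n}(-tz)-\ue^{-tz})z^{-a}$, applies \eqref{half-plane fractional estimate} of Lemma~\ref{half-plane estimate} with exponent $\beta=\alpha-a>\tfrac{1}{p}-\tfrac{1}{q}$ to obtain $\norm{(r_{n}(-tA)-T(t))A^{-\alpha}}_{\La(X)}\leq C'\norm{f}_{\HT^{\infty}\!(\C_{+})}$, and then invokes the scalar bound $\norm{f}_{\HT^{\infty}\!(\C_{+})}\leq 2t^{a}(n+1)^{-a}$ from \cite[Lemmas~3.3 and~3.5]{Egert-Rozendaal13}. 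If you intended your write-up to cover the full proposition, this step is missing.

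For the strong-convergence part, your argument is essentially the same as the paper's. Both use the $A$-stability $\norm{r_{n}}_{\HT^{\infty}\!(\C_{-})}\leq 1$ (the paper cites \cite{Ehle73}), feed this into \eqref{half-plane interpolation estimate} to obtain uniform boundedness of $\{r_{n}(-tA)-T(t)\mid n\in\N,\,t\geq 0\}$ in $\La(\D_{A}(\tfrac{1}{p}-\tfrac{1}{q},1),X)$, and combine this with convergence on a dense subspace coming from the first part. The only cosmetic differences are that you use density of $\D(A)$ whereas the paper uses $\D(A^{2})$, and you spell out the $\varepsilon/3$ decomposition explicitly; both choices are fine.
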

\begin{proof}
Let $t\in(0,\infty)$ and $n\in \N$ with $n\geq \frac{\alpha}{2}-1$. Set 
\begin{align*}
f(z):=\frac{r_{n}(-tz)-\ue^{-tz}}{z^{a}}\qquad (z\in\C_{+}).
\end{align*}
Then Lemma \ref{half-plane estimate} yields a constant $C'\geq 0$ such that
\begin{align*}
\norm{(r_{n}(-tA)-T(t))A^{-\alpha}}_{\La(X)}=\norm{f(A)A^{-\alpha+a}}_{\La(X)}\leq C'\norm{f}_{\HT^{\infty}\!(\C_{+})}.
\end{align*}
By \cite[Lemmas 3.3 and 3.5]{Egert-Rozendaal13},
\begin{align*}
\norm{f}_{\HT^{\infty}\!(\C_{+})}=t^{a}\!\sup_{z\in\C_{+}}\frac{r_{n}(-tz)-\ue^{-tz}}{(tz)^{a}}\leq 2t^{a}(n+1)^{-a}.
\end{align*}
Therefore, with $C:=2C'$,
\begin{align*}
\norm{r_{n}(-tA)x-T(t)x}_{X}&\leq \norm{(r_{n}(-tA)-T(t))A^{-\alpha}}_{\La(X)}\norm{A^{\alpha}x}_{X}\\
&\leq Ct^{a}(n+1)^{-a}\norm{A^{\alpha}x}_{X}
\end{align*}
for all $x\in\D(A^{\alpha})$, which proves the first statement.

Since $\norm{r_{n}}_{\HT^{\infty}\!(\C_{-})}\leq 1$ for all $n\in\N$ by \cite{Ehle73}, Lemma \ref{half-plane estimate} yields that
\begin{align*}
\left\{r_{n}(-tA)-T(t)\mid n\in\N,t\geq 0\right\}\subseteq\La(\D_{A}(\tfrac{1}{p}-\tfrac{1}{q},1),X)
\end{align*}
is uniformly bounded. The proof is now concluded by what we have already shown and by the fact that $\D(A^{2})$ is dense in $\D_{A}(\frac{1}{p}-\frac{1}{q},1)$.
\end{proof}

The same method that was used in Proposition \ref{convergence rates} to yield strong convergence on $\D_{A}(\frac{1}{p}-\frac{1}{q},1)$ also works for other rational approximation methods. Recall that a rational function $r\in\HT^{\infty}\!(\C_{-})$ is said to be \emph{$\mathcal{A}$-stable} if $\norm{r}_{\HT^{\infty}\!(\C_{-})}\leq 1$, and $r$ is a \emph{rational approximation (of the exponential function) of order $k\in\N$} if there exists a constant $C\geq 0$ such that $\abs{r(z)-\ue^{z}}\leq C\abs{z}^{k+1}$ for all $z$ in a complex neighborhood of $0$.

\begin{corollary}\label{other approximation methods}
Let $r$ be an $\mathcal{A}$-stable rational approximation of order $k\in\N$. Let $-A$ generate an exponentially stable $C_{0}$-semigroup $(T(t))_{t\geq 0}$ on a Banach space $X$ with type $p\in[1,2]$ and cotype $q\in[2,\infty)$. Suppose that $T(t)$ is invertible for all $t\geq0$. Then $(r(-\tfrac{t}{n}A)^{n})_{n\in\N}$ converges strongly on $\D_{A}(\frac{1}{p}-\frac{1}{q},1)$ to $T(t)$, locally uniformly in $t\geq 0$. 
\end{corollary}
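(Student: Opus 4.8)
The plan is to repeat the proof of Proposition~\ref{convergence rates} almost verbatim, with the Pad\'e approximant $r_{n}(-t\cdot)$ replaced by the function $r(-\tfrac{t}{n}\cdot)^{n}$ and the Pad\'e-specific bounds \cite[Lemmas 3.3 and 3.5]{Egert-Rozendaal13} replaced by their analogues for general $\mathcal{A}$-stable rational approximations. Write $\theta:=\tfrac{1}{p}-\tfrac{1}{q}$; we may assume $\theta>0$, since otherwise $\D_{A}(\theta,1)=\{0\}$. As observed at the start of this section the hypotheses force $-A$ to generate a $C_{0}$-group, so Lemma~\ref{half-plane estimate} applies. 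Since $r$ is $\mathcal{A}$-stable, for each $s\geq 0$ the function $z\mapsto r(-sz)$ lies in $\HT^{\infty}\!(\C_{+})$ with $\norm{r(-s\cdot)}_{\HT^{\infty}\!(\C_{+})}\leq\norm{r}_{\HT^{\infty}\!(\C_{-})}\leq 1$; moreover $r(-sA)\in\La(X)$ and, by the product rule for the functional calculus, $r(-sA)^{n}=\bigl(r(-s\cdot)^{n}\bigr)(A)$ with $\norm{r(-s\cdot)^{n}}_{\HT^{\infty}\!(\C_{+})}\leq 1$. Hence \eqref{half-plane interpolation estimate} yields a constant $C\geq 0$, independent of $n\in\N$ and $s\geq 0$, with $\norm{r(-sA)^{n}}_{\La(\D_{A}(\theta,1),X)}\leq C$. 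Together with the exponential stability of $(T(t))_{t\geq 0}$ and the continuous embedding $\D_{A}(\theta,1)\hookrightarrow X$, this shows that for each $b>0$ the family $\{r(-\tfrac{t}{n}A)^{n}-T(t)\mid n\in\N,\ t\in[0,b]\}$ is bounded in $\La(\D_{A}(\theta,1),X)$.

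Next I would reduce to a dense subspace: by the uniform bound just obtained and the density of $\D(A^{2})$ in $\D_{A}(\theta,1)$, a standard density/equicontinuity argument reduces the corollary to showing that for each $b>0$ and each $x\in\D(A^{2})$ one has $\sup_{t\in[0,b]}\norm{r(-\tfrac{t}{n}A)^{n}x-T(t)x}_{X}\to 0$ as $n\to\infty$. Fix $a\in(0,1)$, so that $a<2-\theta$ and $a$ is less than the order of $r$, and for $x\in\D(A^{2})$ put $f_{n,t}(z):=\bigl(r(-\tfrac{t}{n}z)^{n}-\ue^{-tz}\bigr)z^{-a}$ for $z\in\C_{+}$. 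Using $r(0)=1$ and the approximation order of $r$ one sees that $f_{n,t}$ is bounded near $0$, and $\mathcal{A}$-stability makes it bounded at $\infty$; hence $f_{n,t}\in\HT^{\infty}\!(\C_{+})$, and by the product rule for the functional calculus
\begin{align*}
\bigl(r(-\tfrac{t}{n}A)^{n}-T(t)\bigr)A^{-2}=f_{n,t}(A)A^{-(2-a)}.
\end{align*}
Since $2-a>\theta$, estimate \eqref{half-plane fractional estimate} gives $\norm{f_{n,t}(A)A^{-(2-a)}}_{\La(X)}\leq C'\norm{f_{n,t}}_{\HT^{\infty}\!(\C_{+})}$ with $C'$ independent of $n$ and $t$.

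What remains, and the only place where input beyond Proposition~\ref{convergence rates} is needed, is the purely scalar estimate: after the rescaling $z\mapsto tz$,
\begin{align*}
\norm{f_{n,t}}_{\HT^{\infty}\!(\C_{+})}=t^{a}\sup_{z\in\C_{+}}\Bigl|\frac{r(-\tfrac{1}{n}z)^{n}-\ue^{-z}}{z^{a}}\Bigr|,
\end{align*}
and the right-hand supremum tends to $0$ as $n\to\infty$. This is the counterpart for $\mathcal{A}$-stable rational approximations of the Pad\'e bounds \cite[Lemmas 3.3 and 3.5]{Egert-Rozendaal13}, and follows from the classical theory of rational approximation of semigroups (see \cite{Brenner-Thomee79} and the references in \cite{Egert-Rozendaal13}). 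Granting it, $\sup_{t\in[0,b]}\norm{f_{n,t}}_{\HT^{\infty}\!(\C_{+})}\leq b^{a}\sup_{z\in\C_{+}}\bigl|(r(-\tfrac{1}{n}z)^{n}-\ue^{-z})z^{-a}\bigr|\to 0$, and therefore
\begin{align*}
\sup_{t\in[0,b]}\norm{r(-\tfrac{t}{n}A)^{n}x-T(t)x}_{X}\leq C'\Bigl(\sup_{t\in[0,b]}\norm{f_{n,t}}_{\HT^{\infty}\!(\C_{+})}\Bigr)\norm{A^{2}x}_{X}\to 0
\end{align*}
for every $x\in\D(A^{2})$, which finishes the proof.

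The hard part will be exactly this last, scalar step: the operator-theoretic work is carried out by Lemma~\ref{half-plane estimate} just as in Proposition~\ref{convergence rates}, but one has to extract from the rational-approximation literature a bound on $\sup_{z\in\C_{+}}|(r(-z/n)^{n}-\ue^{-z})z^{-a}|$ that goes to $0$ for some $a\in(0,1)$, and to keep track of which exponents $a$ are admissible for a given $\mathcal{A}$-stable $r$ --- though any single such $a$ suffices here, as $\theta<1$.
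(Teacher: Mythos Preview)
Your argument is correct, and the overall architecture---uniform boundedness in $\La(\D_{A}(\theta,1),X)$ via Lemma~\ref{half-plane estimate}, followed by convergence on a dense subspace---is exactly the paper's. The difference lies in how the convergence on the dense subspace is obtained.

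The paper does not re-enter the functional calculus for this step. It simply cites \cite[Theorem~3]{Brenner-Thomee79} for the operator-theoretic fact that $r(-\tfrac{t}{n}A)^{n}x\to T(t)x$ for $x\in\D(A^{p+1})$, locally uniformly in $t$, and then combines this with the uniform bound and the density of $\D(A^{p+1})$ in $\D_{A}(\theta,1)$. That is the entire proof.

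Your route instead reproves convergence on $\D(A^{2})$ by a second application of \eqref{half-plane fractional estimate} to $f_{n,t}$, which forces you to isolate the scalar estimate $\sup_{z\in\C_{+}}|(r(-z/n)^{n}-\ue^{-z})z^{-a}|\to 0$. This estimate is indeed true for any $a\in(0,p+1)$ (split at $|z|\sim n^{p/(p+1)}$, using $|r(-z/n)^{n}-\ue^{-z}|\leq Cn|z/n|^{p+1}$ for small $|z|/n$ and the trivial bound $2$ elsewhere), but it is precisely the content underlying \cite[Theorem~3]{Brenner-Thomee79}. So what you flag as the ``hard part'' is exactly what the paper outsources in one citation. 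Your version is more self-contained, at the cost of redoing work already packaged in the reference; the paper's version is shorter.
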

\begin{proof}
Lemma \ref{half-plane estimate} yields a constant $C\geq 0$ such that
\begin{align*}
\norm{r(-\tfrac{t}{n}A)^{n}-T(t)}_{\La(\D_{A}(\frac{1}{p}-\frac{1}{q},1),X)}&\leq C\norm{r(-\tfrac{t}{n}\cdot)^{n}-\ue^{-t\cdot}}_{\HT^{\infty}\!(\C_{+})}\\
&\leq C(\norm{r^{n}}_{\HT^{\infty}\!(\C_{-})}+1)\leq 2C
\end{align*}
for all $n\in\N$ and $t\geq 0$. Since, by \cite[Theorem 3]{Brenner-Thomee79}, $r(-\tfrac{t}{n}A)^{n}$ converges locally uniformly in $t$ to $T(t)$ on $\D(A^{k+1})$, the uniform boundedness of 
\begin{align*}
\left\{r(-\tfrac{t}{n}A)^{n}-T(t)\mid t\geq 0,n\in\N\right\}\subseteq\La(\D_{A}(\tfrac{1}{p}-\tfrac{1}{q},1),X)
\end{align*}
and the fact that $\D(A^{k+1})$ is dense in $\D_{A}(\tfrac{1}{p}-\tfrac{1}{q},1)$ yield the desired statement.
\end{proof}

\begin{remark}\label{rates on Lp}
If $X$ is isomorphic to a complemented subspace of a $p$-convex and $q$-concave Banach lattice, for $p\in[1,2]$ and $q\in[2,\infty)$, then the case $\beta=\frac{1}{p}-\frac{1}{q}$ is attained in Lemma \ref{half-plane estimate}. Hence, in the setting of Corollary \ref{Cayley transform},
\begin{align*}
\sup_{n\in\N}\|(1-A)^{n}(1+A)^{-n-\frac{1}{p}+\frac{1}{q}\!}\|_{\La(X)}<\infty.
\end{align*}
Moreover, one obtains rate $O(n^{-\alpha+\frac{1}{p}-\frac{1}{q}})$ in Proposition \ref{convergence rates}, and strong convergence on $\D(A^{\frac{1}{p}-\frac{1}{q}})$ in Corollary \ref{other approximation methods}.
\end{remark}

\subsubsection{Funding}

This work was supported by the Netherlands Organisation for Scientific Research (NWO) [grant number 613.000.908 ``Applications of Transference Principles''].

\subsubsection{Acknowledgements}
 
The author thanks Mark Veraar for numerous helpful suggestions, and the referee for carefully checking the manuscript.

\bibliographystyle{plain}
\bibliography{Bibliografie}

\end{document}